\numberwithin{equation}{section}
\newcommand{\Dd}{{\mathcal{D}}}
\newcommand{\id}{\mathrm{id}}
\newcommand{\Cc}{\mathbb{C}}
\newcommand{\Qq}{\mathbb{Q}}
\newcommand{\Rr}{\mathbb{R}}
\newcommand{\Zz}{\mathbb{Z}}
\newcommand{\vol}{\operatorname{vol}}
\newcommand{\Center}{\operatorname{center}}
\newcommand{\Exc}{\operatorname{Exc}}
\newcommand{\ct}{\operatorname{ct}}
\newcommand{\mld}{{\rm{mld}}}
\newcommand{\relin}{\operatorname{relin}}
\newcommand{\lct}{\operatorname{lct}}
\newcommand{\Supp}{\operatorname{Supp}}
\newcommand{\codim}{\operatorname{codim}}
\newcommand{\mult}{\operatorname{mult}}
\newcommand{\Ii}{\Gamma}
\newcommand{\mm}{\mathfrak{m}}
\newcommand{\CT}{\operatorname{CT}}
\newcommand{\coeff}{\operatorname{coeff}}
\newtheorem{thm}{Theorem}[section]
\newtheorem{conj}[thm]{Conjecture}
\newtheorem{cor}[thm]{Corollary}
\newtheorem{lem}[thm]{Lemma}
\newtheorem{sett}[thm]{Setting}
\theoremstyle{definition}
\newtheorem{defn}[thm]{Definition}
\theoremstyle{definition}
\newtheorem{rem}[thm]{Remark}
\theoremstyle{definition}
\begin{document}

\title[Termination of flips and enc pairs]{On termination of flips and exceptionally non-canonical singularities}
\author{Jingjun Han and Jihao Liu}

\subjclass[2020]{14E30,14B05,32S05,14J17,14J30,14J35}
\date{\today}

\begin{abstract}
We systematically introduce and study a new type of singularities, namely, exceptionally non-canonical (enc) singularities. This class of singularities plays an important role in the study of many questions in birational geometry, and has tight connections with local K-stability theory, Calabi-Yau varieties, and mirror symmetry.

We reduce the termination of flips to the termination of terminal flips and the ACC conjecture for minimal log discrepancies (mlds) of enc pairs. As a consequence, the ACC conjecture for mlds of enc pairs implies the termination of flips in dimension $4$.

We show that, in any fixed dimension, the termination of flips follows from the lower-semicontinuity for mlds of terminal pairs, and the ACC for mlds of terminal and enc pairs. Moreover, in dimension $3$, we give a rough classification of enc singularities, and prove the ACC for mlds of enc pairs. These two results provide a second proof of the termination of flips in dimension $3$ which does not rely on any difficulty function.

Finally, we propose and prove the special cases of several conjectures on enc singularities and local K-stability theory. We also discuss the relationship between enc singularities, exceptional Fano varieties, and Calabi-Yau varieties with small mlds or large indices via mirror symmetry.
\end{abstract}

\address{Shanghai Center for Mathematical Sciences, Fudan University, Jiangwan Campus, Shanghai, 200438, China}
\email{hanjingjun@fudan.edu.cn}

\address{Department of Mathematics, Peking University, No. 5 Yiheyuan Road, Haidian District, Peking 100871, China}
\email{liujihao@math.pku.edu.cn}

\maketitle

\tableofcontents

\section{Introduction}

We work over the field of complex numbers $\mathbb C$.

The minimal model program (MMP) aims to provide a birational classification of algebraic varieties. The termination of flips in the MMP is one of the major remaining open problems in birational geometry. The goal of this paper is to introduce and study a class of singularities, namely, \emph{exceptionally non-canonical (enc)} singularities, and utilize it on the termination of flips and other topics in birational geometry. A pair $(X,B)$ is called \emph{enc} if $(X,B)$ is not canonical, and all but one exceptional prime divisors over $X$ have log discrepancies greater than or equal to $1$ (see Definition \ref{defn: enc}).

\medskip

\noindent\textbf{Termination of flips and the ACC for mlds of enc pairs}. Shokurov established a relation between the termination of flips and minimal log discrepancies (mlds), a basic but important local invariant in birational geometry. To be specific, Shokurov \cite[Theorem]{Sho04} proved that his ACC conjecture for (local) mlds \cite[Problem 5]{Sho88} together with the lower-semicontinuity (LSC) conjecture for mlds \cite[Conjecture 0.2]{Amb99} imply the termination of flips.

On the one hand, the ACC conjecture for mlds remains unknown even in dimension $3$, while the termination of flips is proved for threefolds \cite{Kaw92a,Sho96}. On the other hand, the ACC conjecture for mlds aims to reveal the structure of mlds of \emph{all} singularities, while the singularities appearing in any given sequence of flips should be very special (even of finitely many types as we conjecture that the sequence of flips terminates). This indicates that the ACC conjecture for mlds might be much more difficult than the termination of flips, and we may not need the full power of this conjecture to show the termination of flips.

In this paper, we try to resolve this issue. Our first main result reduces the termination of flips to the ACC for (global) mlds of a very special class of singularities, enc singularities, and the termination of terminal flips. The latter is known up to dimension $4$.

\begin{thm}\label{thm: enc and tof}
Let $d$ be a positive integer. Assume that
\begin{enumerate}
    \item the ACC for (global) mlds of enc pairs with finite coefficients of dimension $d$ holds, i.e. $$\{\mld(X,B)\mid (X,B)\text{ is enc, }\dim X=d, \coeff(B)\subseteq\Ii\}$$
    satisfies the ACC where $\Ii\subset [0,1]$ is a finite set (Conjecture \ref{conj: acc mld enc}(2')), and
    \item any sequence of $\mathbb Q$-factorial terminal flips in dimension $d$ terminates.
\end{enumerate}
Then any sequence of lc flips in dimension $\le d$ terminates.
\end{thm}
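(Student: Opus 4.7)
The plan is to argue by contradiction, following Shokurov's general strategy of reducing termination of flips to the ACC for mlds, but in a form that invokes the ACC only for the very restricted class of enc pairs. Starting from a putative infinite sequence of lc flips $(X_i, B_i) \dashrightarrow (X_{i+1}, B_{i+1})$ of dimension $\le d$, I would induct on $d$ so that termination in all lower dimensions may be assumed. Special termination then guarantees that the flipping loci eventually avoid the non-klt locus, and hence after truncation each $(X_i, B_i)$ is $\mathbb{Q}$-factorial klt of dimension exactly $d$.

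The heart of the argument tries to render the sequence effectively terminal. By Shokurov's monotonicity, the log discrepancy along any fixed geometric valuation is non-decreasing in $i$, so in particular so is $\mld(X_i, B_i)$. If $\mld(X_i, B_i) \ge 1$ for $i \gg 0$, the pairs are canonical and admit a $\mathbb{Q}$-factorial terminal crepant model to which the sequence of flips lifts, and assumption (2) finishes. Otherwise $\mld(X_i, B_i) < 1$ for infinitely many $i$; for such $i$ I would crepantly extract every exceptional divisor of log discrepancy $<1$ except one place $E_i$ realizing the mld, producing a $\mathbb{Q}$-factorial enc pair $(Y_i, B_{Y_i})$ with $\mld(Y_i, B_{Y_i}) = \mld(X_i, B_i)$. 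Hypothesis (1) then forces $\{\mld(X_i, B_i)\}$ into an ACC set, and combined with monotonicity the mld stabilizes. Iterating the same construction on the next-smallest log discrepancies $<1$ should successively stabilize all of them, so that the tail of the sequence can neither create, destroy, nor alter any place of log discrepancy $<1$. Extracting them all simultaneously then realizes the tail as a sequence of $\mathbb{Q}$-factorial terminal flips, to which assumption (2) applies to yield the contradiction.

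The principal obstacle is the coefficient bookkeeping inside the enc extractions: hypothesis (1) demands a finite boundary coefficient set, whereas a crepant extraction of $E$ introduces a new boundary component of coefficient $1-a(E, X_i, B_i)$ which a priori varies with $i$. The fix is a nested stabilization, in which one first applies (1) to the smallest log discrepancy $<1$ using only the (finite) coefficients of $B_0$, then enlarges the finite coefficient set by the finitely many limit values obtained before applying (1) to the second-smallest, and so on through the finitely many non-canonical places. Establishing sufficient finiteness of the places with log discrepancy $<1$ for klt pairs of dimension $d$, and verifying that each crepant extraction can be carried out inside the MMP framework, are the further technical ingredients that make the argument delicate.
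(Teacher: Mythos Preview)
Your overall architecture matches the paper's: induct on dimension, use special termination to reduce to the $\mathbb{Q}$-factorial klt case, then stabilize all log discrepancies $\le 1$ so as to lift the tail to a sequence of $\mathbb{Q}$-factorial terminal flips. You also correctly isolate the principal obstacle. However, your proposed fix has a genuine gap. The first step of your nested stabilization claims to apply hypothesis (1) ``using only the (finite) coefficients of $B_0$'', but to manufacture an enc pair from $(X_i,B_i)$ one must extract all non-canonical places except one, and the extracted places immediately acquire boundary coefficients $1-a(E_l,X_i,B_i)$ lying outside the coefficient set of $B_0$. Thus the very first step cannot get started: there is no enc pair with boundary in a fixed finite set until some of the log discrepancies have already been stabilized, which is circular. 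Saying that one later ``enlarges by the finitely many limit values obtained'' does not help, because those limits are needed \emph{before} the first application of (1), not after it.

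The paper breaks this circularity in one stroke rather than by nesting. By monotonicity of flips the limits $a_l:=\lim_i a(E_l,X_i,B_i)$ exist a priori, so the finite set $\Ii':=\Ii\cup\{1-a_l: 1\le l\le k\}$ is available before anything has been stabilized. The key lemma (Lemma~\ref{lem: reduce to enc}, resting on \cite[Lemma 5.3]{Liu18}) shows that if the set of all $a(E_l,X_i,B_i)$ failed the ACC, then after passing to a subsequence one can extract all but one of the $E_l$ using the \emph{limit} coefficients $1-a_l$ rather than the crepant ones, and moreover choose \emph{which} divisor to leave unextracted so that the resulting pair is still enc with strictly increasing mld. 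Hypothesis (1) applied with the single finite set $\Ii'$ gives a contradiction, so $\{a(E_l,X_i,B_i)\}_{i,l}$ satisfies the ACC; being already DCC by monotonicity, it is finite, and all log discrepancies $\le 1$ stabilize simultaneously. The lift to terminal flips then proceeds exactly as you describe.
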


We note that our proof of Theorem \ref{thm: enc and tof} does not rely on \cite{Sho04}. As a consequence of Theorem \ref{thm: enc and tof}, the ACC for (global) mlds of enc pairs implies the termination of flips in dimension $4$. 

\begin{thm}\label{thm: enc and tof dim 4}
Assume the ACC for (global) mlds of enc pairs with finite coefficients of dimension $4$. Then any sequence of lc flips in dimension $4$ terminates.
\end{thm}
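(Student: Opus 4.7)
The plan is to obtain Theorem \ref{thm: enc and tof dim 4} as an immediate special case of Theorem \ref{thm: enc and tof} with $d = 4$. The hypothesis of Theorem \ref{thm: enc and tof dim 4} is word-for-word assumption (1) of Theorem \ref{thm: enc and tof} in dimension $4$, so the only thing I need to produce is a verification of assumption (2): that every sequence of $\mathbb{Q}$-factorial terminal flips in dimension $4$ terminates.

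For this input I would invoke the known termination of $\mathbb{Q}$-factorial terminal flips in dimension $4$, a classical result established by Kawamata--Matsuki--Mori--Mukai and later reproved via Shokurov's difficulty function (and recorded in Matsuki's monograph on Mori theory). With both assumptions of Theorem \ref{thm: enc and tof} in place for $d=4$, the theorem produces termination of lc flips in all dimensions $\leq 4$; specializing to dimension $4$ gives the desired statement.

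The main (and essentially only) obstacle is a bookkeeping one: making sure that the formulation of assumption (2) in Theorem \ref{thm: enc and tof} matches what the cited literature proves, namely termination of flips in the $\mathbb{Q}$-factorial terminal category with no extra boundary. Since this is exactly the classical setting of terminal MMP, no further work is needed. In effect, Theorem \ref{thm: enc and tof dim 4} is a packaging statement, recording that under current knowledge of termination of terminal flips, termination of lc flips in dimension $4$ is now reduced to a single ACC-type question about mlds of enc pairs, which is the content of the paper's main reduction.
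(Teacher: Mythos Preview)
Your approach is exactly the paper's: deduce Theorem~\ref{thm: enc and tof dim 4} from Theorem~\ref{thm: enc and tof} with $d=4$, supplying assumption~(2) from the literature. However, two details in your write-up need correcting.

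First, the attribution is off. Termination of terminal (more generally, canonical) fourfold flips is due to Fujino \cite{Fuj04,Fuj05}, not ``Kawamata--Matsuki--Mori--Mukai''; the latter combination is not a standard reference, and the Kawamata--Matsuda--Matsuki paper \cite{KMM87} you may have in mind concerns threefolds.

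Second, and more substantively, assumption~(2) of Theorem~\ref{thm: enc and tof} is \emph{not} about terminal varieties with no boundary. In the proof of Theorem~\ref{thm: enc and tof} one lifts the original sequence of flips to a sequence of $(K_{Y_0}+B_{Y_0})$-flips on a $\mathbb{Q}$-factorial terminal \emph{pair} $(Y_0,B_{Y_0})$, where $B_{Y_0}$ is a nonzero $\mathbb{R}$-divisor coming from the crepant pullback. So what you need is termination of flips for terminal (or canonical) pairs with $\mathbb{R}$-coefficients in dimension~$4$. This is supplied by \cite{Fuj04,Fuj05}; the paper notes that Fujino's argument, stated for $\mathbb{Q}$-coefficients, goes through verbatim for $\mathbb{R}$-coefficients. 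Your sentence asserting that assumption~(2) is ``exactly the classical setting of terminal MMP with no extra boundary'' is therefore incorrect and should be removed.
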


As another application of Theorem \ref{thm: enc and tof}, we may refine Shokurov's approach towards the termination of flips. To be more specific, assuming the ACC for (global) mlds of enc pairs, in order to prove the termination of flips, we only need the ACC and LSC for mlds of terminal pairs instead of lc pairs. We note that the set of terminal singularities is the smallest class for the purpose to run the MMP for smooth varieties, while the set of lc singularities is rather complicated and hard to work with (see \cite[page 57]{KM98}). Moreover, terminal surface (resp. threefold) singularities are smooth (resp. hyperquotient singularities), and the LSC conjecture for mlds is proven for the smooth varieties and hyperquotient singularities in any dimension \cite{EMY03,NS21}.

\begin{thm}[$=$Theorem \ref{thm: ter acc lsc+enc to tof}]\label{thm: strong lsc+acc to tof}
Let $d$ be a positive integer. Assume
\begin{enumerate}
    \item the ACC for (local) mlds of terminal pairs with finite coefficients,
    \item the LSC for mlds of terminal pairs, and
    \item the ACC for (global) mlds of enc pairs with finite coefficients (Conjecture \ref{conj: acc mld enc}(2'))
\end{enumerate}
hold in dimension $d$. Then any sequence of lc flips in dimension $\le d$ terminates.
\end{thm}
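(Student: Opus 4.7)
The plan is to reduce to Theorem~\ref{thm: enc and tof} and then supply its remaining hypothesis via a Shokurov-style difficulty argument restricted to the terminal class. Specifically, hypothesis (3) of Theorem~\ref{thm: strong lsc+acc to tof} is identical to hypothesis (1) of Theorem~\ref{thm: enc and tof}, so once I know that every sequence of $\mathbb{Q}$-factorial terminal flips in dimension $d$ terminates, Theorem~\ref{thm: enc and tof} instantly yields termination of lc flips in dimension $\le d$. Thus the entire task is to upgrade the ACC plus LSC hypotheses (1) and (2) for \emph{terminal} pairs to the termination of $\mathbb{Q}$-factorial terminal flips.

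The crucial observation enabling this terminal-only upgrade is that terminality is preserved under flips: along a sequence $(X_i,B_i)\dashrightarrow(X_{i+1},B_{i+1})$ of $\mathbb{Q}$-factorial terminal flips, every $(X_i,B_i)$ is terminal and the coefficient set $\{\coeff(B_i)\}$ is constant (hence finite once finite to begin with). Consequently, whenever Shokurov's strategy from \cite{Sho04} queries the value of a log discrepancy or an mld along such a sequence, it queries it on a terminal pair with bounded coefficients. This means that in the terminal setting, one never needs to invoke ACC or LSC for the full lc class: the versions restricted to terminal pairs in hypotheses (1) and (2) are exactly what is used.

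I would then re-run Shokurov's argument with this restriction. Given an infinite sequence of $\mathbb{Q}$-factorial terminal flips, define a difficulty function from the multiset of mlds $\{\mld(X_i,B_i;\eta)\}_\eta$ as $\eta$ ranges over the centers of relevant exceptional divisors (the ones with small log discrepancy). Since flips do not decrease log discrepancies of any fixed divisor and strictly increase them on centers meeting the flipped locus, LSC for mlds for terminal pairs (hypothesis~(2)) yields that this difficulty is monotone non-increasing along the sequence, and decreases strictly whenever the flipped locus contributes. The ACC for local mlds for terminal pairs with finite coefficients (hypothesis~(1)) shows that the set of possible values the difficulty can take is a well-ordered subset of $\mathbb{R}_{\ge 0}$ of some explicit form, so the strict decreases cannot occur infinitely often. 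This forces the sequence of flipping loci to become vacuous, contradicting infiniteness.

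The main obstacle will be a careful bookkeeping: one must choose the difficulty so that every log discrepancy it sees is computed on a terminal pair (so that hypothesis~(2) applies), and so that the set of values attained is controlled by the terminal ACC in (1). In Shokurov's original formulation the difficulty is set up for lc pairs and genuinely uses lc ACC/LSC; the adaptation here must re-cut the threshold and the center-counting so that only terminal pairs appear in the invariant's definition, while still retaining both the monotonicity under flips and the strict drop at flipped centers. Once this bookkeeping is made precise, the combination with Theorem~\ref{thm: enc and tof} gives the desired termination.
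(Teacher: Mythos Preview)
Your plan is correct and matches the paper's approach: the paper also reduces to Theorem~\ref{thm: enc and tof} and supplies its hypothesis~(2) by running Shokurov's ACC+LSC argument restricted to the class of pairs with $\mld>1$, which is exactly the terminal class (this is packaged as Theorem~\ref{thm: Strong ACC LSC imply termination} applied with $a=1$). One terminological caveat: the argument in \cite{Sho04} and in the paper's Theorem~\ref{thm: Strong ACC LSC imply termination} does not use a traditional ``difficulty function'' but rather tracks the infimum of mlds over flipping loci together with the cycle-space dimension of a level set (Lemma~\ref{lem:cycle}); your bookkeeping concern dissolves once you observe that every $(X_i,B_i)$ in the sequence is terminal, so the hypotheses of that theorem are satisfied verbatim.
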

The set of enc singularities is expected to be a really small class of singularities that should possess some nice properties. The local Cartier indices of enc singularities are expected to be bounded from above (see Conjecture \ref{conj: index of enc}), hence we predict that the set of their mlds is discrete away from zero and should be much smaller than the set of all mlds. We also remark that in Theorems \ref{thm: enc and tof}, \ref{thm: enc and tof dim 4}, and \ref{thm: strong lsc+acc to tof}, we only need the ACC for (global) mlds (of enc pairs), which is considered to be much simpler than the ACC for (local) mlds. For instance, consider any normal variety $X$. The log discrepancy of the exceptional divisor, obtained through the blow-up of $X$ at any smooth codimension $2$ point, is $2$. Therefore, the global mld of any normal variety is always $\le 2$. However, the boundedness conjecture for (local) mlds remains open in dimension $\ge 4$.

\medskip

\noindent\textbf{ACC for mlds of enc threefolds}. Recall that the ACC conjecture for mlds is only known in full generality for surfaces \cite{Ale93} (see \cite{Sho94b,HL23b,HL24} for other proofs), toric pairs \cite{Amb06}, and exceptional singularities \cite{HLS24}. It is still open for threefolds in general and only some partial results are known (see \cite{Kaw92b,Mar96,Kaw15b,Nak16,NS22,Jia21,LX21,HLL22,LL22}). The second main result of this paper is the ACC for mlds of enc pairs in dimension $3$. This result suggests that the ACC conjecture for mlds of enc pairs should be much easier than Shokurov's ACC conjecture for mlds.

\begin{thm}[cf. Theorem \hyperlink{thm: E}{E}]\label{thm: acc mld enc dim 3}
Let $\Ii\subset [0,1]$ be a DCC set. Then
$$\{\mld(X,B)\mid (X,B)\text{ is enc, }\dim X=3, \coeff(B)\subseteq\Ii\}$$
satisfies the ACC.
\end{thm}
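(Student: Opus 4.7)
The plan is to argue by contradiction: suppose there exists a sequence of enc threefold pairs $(X_i, B_i)$ with $B_i \in \Gamma$ such that $a_i := \mld(X_i, B_i)$ is strictly increasing, with $a_i \in (0,1)$ and $a_i \geq a_1 > 0$. First I would extract the unique exceptional divisor attaining the mld: for each $i$, let $E_i$ denote the unique exceptional prime over $X_i$ with $a(E_i; X_i, B_i) < 1$, so that $a(E_i; X_i, B_i) = a_i$. After localizing at its center, which may be assumed to be a closed point, I would produce a plt blow-up $\pi_i \colon Y_i \to X_i$ extracting exactly $E_i$, with $Y_i$ being $\mathbb{Q}$-factorial, $-E_i$ being $\pi_i$-ample, and $\pi_i^*(K_{X_i} + B_i) = K_{Y_i} + B'_i + (1-a_i) E_i$, where $B'_i$ is the strict transform of $B_i$. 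By the enc hypothesis, $(Y_i, B'_i + (1-a_i) E_i)$ is plt and canonical outside $E_i$.

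Next I would invoke the rough classification of enc threefold singularities established elsewhere in the paper to distribute the $\pi_i$ into finitely many geometric types; after passing to a subsequence, I may assume all $\pi_i$ belong to a single type, so that the Cartier indices of $Y_i$ along the generic points of curves in $E_i$ are uniformly bounded and $E_i$ varies in a bounded family of surfaces. I would then apply plt adjunction in Shokurov's extended form: writing $K_{Y_i} + B'_i + (1-a_i) E_i = K_{Y_i} + E_i + (B'_i - a_i E_i)$ and restricting to $E_i$ yields a klt log Calabi--Yau pair $(E_i, \Delta_i)$ with $K_{E_i} + \Delta_i \sim_{\mathbb{Q}} 0$ and $\Delta_i = \Diff_{E_i}(0) + B'_i|_{E_i} - a_i\, E_i|_{E_i}$. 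Because $-E_i|_{E_i}$ is ample, the contribution $-a_i E_i|_{E_i}$ is nonzero, and each coefficient of $\Delta_i$ along a curve $C \subset E_i$ takes the form
\[ \frac{m_C - 1}{m_C} + \frac{1}{m_C}\Bigl(\sum_j b_{i,j}\, n_{i,j,C} - a_i\, p_C\Bigr), \]
with $m_C, p_C, n_{i,j,C} \in \mathbb{Z}_{\geq 0}$ and $b_{i,j} \in \Gamma$; the bounds on $m_C$ and $p_C$ coming from the classification, together with DCC of $\Gamma$ and $a_i \geq a_1 > 0$, force these coefficients into a DCC set.

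Finally I would apply the global ACC for klt log Calabi--Yau surfaces (Alexeev \cite{Ale93} or its refinements) to conclude that the coefficients of $\Delta_i$ simultaneously satisfy ACC, hence lie in a finite set. Since $a_i$ enters at least one coefficient nontrivially (via the ample, nonzero term $-a_i E_i|_{E_i}$), $\{a_i\}$ must also be finite, contradicting strict monotonicity.

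The hard part will be the classification step: one must show that every enc threefold germ admits a plt blow-up of one of finitely many types, controlling both the Cartier index of $Y$ along the generic points of curves in $E$ and the geometry of $E$ itself. A secondary subtlety, arising in the adjunction step, is that $B'_i - a_i E_i$ is not effective, so the different must be handled via the extended formalism $\Diff_E(D) = \Diff_E(0) + D|_E$; one must verify that $a_i$ enters the coefficients of $\Delta_i$ in a way that simultaneously preserves DCC from below and produces genuine dependence on $a_i$, in order for the global ACC input to collapse the sequence.
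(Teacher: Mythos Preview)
Your strategy is the natural adaptation of the \cite{HLS19} argument for exceptional singularities: extract the unique divisor, restrict to it, and feed the resulting log Calabi--Yau surface into the global ACC. The paper itself notes that this route would succeed if Conjecture~\ref{conj: index of enc} (local boundedness/$\delta$-plt blow-ups for enc pairs) were known, since by \cite[Theorem~1.3]{HLS19} that conjecture implies Conjecture~\ref{conj: acc mld enc}. The gap in your plan is precisely here: the ``rough classification established elsewhere in the paper'' does \emph{not} deliver what you need. Appendix~\ref{appendix: cDV quotient classificaiton} only treats the single case where the index~$1$ cover is an isolated cDV singularity, and its output is that either the cyclic-cover index $r$ or a certain weight vector $\beta$ lies in a finite set depending on $k$; it says nothing about $E_i$ lying in a bounded family or about Cartier indices along curves in $E_i$ in the remaining cases (strictly canonical $X$, terminal $X$, strictly canonical cover). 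Those cases are handled by entirely different methods, and the paper explicitly leaves the full local boundedness as Conjecture~\ref{conj: index of enc}. Without that boundedness you cannot control the self-intersection term $-a_iE_i|_{E_i}$ in your $\Delta_i$: the class $E_i|_{E_i}$ is only a numerical class, and to turn it into an effective divisor with coefficients of the shape you write, with $p_C$ bounded, you would need exactly the bounded-family statement you are assuming.

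The paper's actual proof is structurally quite different. It runs an induction on $l$ with $\mld>\tfrac{1}{l}$, proving three interlocking statements $E_l$, $N_l$, $C_l$ (Lemmas~\ref{lem: e to n}, \ref{lem: c to e}, \ref{lem: n to c}). The core step $C_l\Rightarrow E_l$ is a case split on the singularity type of $X\ni x$: the strictly canonical case uses the cone theorem and bounded complements (Theorem~\ref{thm: acc mld enc dim 3 strictly canonical case}); the terminal case uses uniform rational polytopes and explicit weighted blow-ups on $cA/n$ points (Theorems~\ref{thm: enc ACC fin coeff}, \ref{thm: enc terminal reduce to fin coeff}); the non-canonical case passes to the index~$1$ cover and splits again into smooth (toric, Theorem~\ref{thm: enc smooth cover case}), isolated cDV (the Appendix classification, Theorem~\ref{thm: enc cDV cover case}), and strictly canonical cover (which feeds back into $N_{l-1}$). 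At no point does the argument restrict to $E_i$ and invoke surface global ACC in the way you propose.
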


By Theorems \ref{thm: strong lsc+acc to tof} and \ref{thm: acc mld enc dim 3}, we may reprove the termination of flips in dimension $3$.

\begin{cor}[$=$Corollary \ref{cor: new proof tof dim 3}]\label{cor: tof dim 3}
Any sequence of lc flips terminates in dimension $3$.
\end{cor}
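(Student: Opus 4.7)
The plan is to deduce this corollary by applying Theorem \ref{thm: strong lsc+acc to tof} with $d=3$, so the task reduces to checking its three hypotheses in dimension $3$.

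First, I would verify hypothesis (3), the ACC for (global) mlds for enc pairs with finite coefficients in dimension $3$. Since any finite set is automatically DCC, this is an immediate consequence of Theorem \ref{thm: acc mld enc dim 3} (the corresponding statement for arbitrary DCC coefficient sets). This is the genuine new input that makes the argument work.

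Next, I would verify hypotheses (1) and (2). For (2), the LSC for mlds for terminal threefold pairs: by Mori's classification, a terminal threefold is a hyperquotient singularity, so LSC follows from the results cited in the paper (\cite{EMY03} in the smooth case and \cite{NS21} in the hyperquotient case, covering pairs on such varieties in any dimension). For (1), the ACC for (local) mlds for terminal threefold pairs with finite coefficients, I would appeal to the classification/boundedness theory of terminal threefolds together with known ACC-type results (e.g. \cite{Kaw92b,Mar96,Nak16,NS20}) for mlds of terminal threefold pairs with prescribed coefficient set. Once all three hypotheses hold, Theorem \ref{thm: strong lsc+acc to tof} directly yields termination of every sequence of lc flips in dimension $\le 3$.

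The only potentially subtle point in the verification is hypothesis (1): one needs ACC for local mlds on terminal threefolds with finite coefficients, not merely the global version. However, terminal threefold germs have bounded Cartier index, so after passing to the index-one cover the problem reduces to ACC on a bounded family of terminal singularities with boundary coefficients in a fixed finite set, from which the ACC of mlds follows. If one prefers to avoid this input entirely, an alternative route is to quote the ACC for mlds for terminal threefold pairs directly from the existing literature and then invoke Theorem \ref{thm: strong lsc+acc to tof}. In either case, no difficulty function is used, so the resulting proof of termination in dimension $3$ is independent of the Kawamata--Shokurov approach, as promised in the introduction.
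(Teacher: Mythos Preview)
Your approach is exactly the paper's: apply Theorem~\ref{thm: strong lsc+acc to tof} with $d=3$ and verify its three hypotheses, with (3) coming from Theorem~\ref{thm: acc mld enc dim 3}, (2) from the known LSC in dimension $3$ (the paper cites \cite[Main Theorem~1]{Amb99}, with \cite{NS21} as an alternative), and (1) from the ACC for terminal threefold mlds (the paper cites \cite[Corollary~1.5]{Nak16}, or \cite[Theorem~1.1]{HLL22}).

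One factual slip to correct: terminal threefold germs do \emph{not} have bounded Cartier index---for instance the $cA/n$ singularities $\frac{1}{n}(1,-1,a)$ occur for all $n$---so your suggested reduction via ``bounded Cartier index'' does not work as stated. Your fallback of citing the literature directly is exactly what is needed; \cite[Corollary~1.5]{Nak16} gives precisely the required ACC statement for terminal threefold pairs with finite coefficients.
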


We remark that our proof of Corollary \ref{cor: tof dim 3} only depends on the ACC and the LSC for mlds, and does not rely on any other auxiliary methods. In particular, our proof does not rely on difficulty functions, which played a key role in the previous proofs on the termination of flips in dimension $3$ (see \cite{Sho85,KMM87,Kaw92a,Kol+92,Sho96}) but are difficult to be applied in higher dimensions, especially in dimension $>4$ (we refer the reader to \cite{AHK07,Sho04,Fuj05} for some progress in dimension $4$). The proof of Corollary \ref{cor: tof dim 3}, which is based on Theorem \ref{thm: strong lsc+acc to tof}, may shed light on another approach towards the termination of flips in high dimensions. Note that Corollary \ref{cor: tof dim 3} does not follow from Shokurov's approach \cite{Sho04} directly, as the ACC conjecture for mlds is still open in dimension $3$.

\medskip

Now we turn our attention away from the termination of flips and focus on the ACC conjecture for mlds itself. We may show the following more technical but much stronger result on the ACC conjecture for mlds.

\begin{thm}[cf. Theorem \hyperlink{thm: N}{N}]\label{thm: acc mld 3dim bounded ld number}
Let $N$ be a non-negative integer, and $\Ii\subset [0,1]$ a DCC set. Then there exists an ACC set $\Ii'$ depending only on $N$ and $\Ii$ satisfying the following. Assume that  $(X,B)$ is a klt pair of dimension $3$ such that
\begin{enumerate}
    \item $\coeff(B)\subseteq\Ii$, and
    \item there are at most $N$ different (exceptional) log discrepancies of $(X,B)$ that are $\le 1$, i.e.
    $$\#(\{a(E,X,B)\mid E\text{ is exceptional over }X\}\cap [0,1])\le N,$$
\end{enumerate}
then $\mld(X,B)\in\Ii'$.
\end{thm}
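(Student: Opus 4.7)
The plan is to proceed by induction on $N$, with Theorem \ref{thm: acc mld enc dim 3} as the engine for the base step. When $N=0$ the pair is terminal and the mld is realized either at a component of $B$, giving $\mld(X,B)=1-\max_j b_j\in 1-\Ii$ (an ACC set since $\Ii$ is DCC), or at an exceptional divisor of log discrepancy $>1$, for which one can appeal to the classification of terminal threefold singularities. For $N=1$ the non-terminal case is precisely the enc setting of Theorem \ref{thm: acc mld enc dim 3}, up to the easy canonical borderline case of a single exceptional log discrepancy equal to $1$.

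For the inductive step $N\geq 2$, let $E_1,\ldots,E_k$ (with $k\leq N$) be the exceptional divisors with log discrepancy $\leq 1$, ordered so that $a_1\leq\cdots\leq a_k$. If $\mld(X,B)$ is attained at a component of $B$ we are done as above, so assume $\mld(X,B)=a_1$. I would perform a crepant divisorial extraction $f\colon Y\to X$ of, say, $E_k$, which exists in dimension $3$ for klt pairs by standard MMP techniques (Birkar-Chen-Xu). The new pair $(Y,B_Y)$ with $B_Y:=(f^{-1})_*B+(1-a_k)E_k$ is crepant to $(X,B)$, so $\mld(Y,B_Y)=\mld(X,B)$, and has only $k-1\leq N-1$ exceptional log discrepancies $\leq 1$. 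If $B_Y$ lies in a DCC set depending only on $N$ and $\Ii$, the inductive hypothesis applied to $(Y,B_Y)$ yields the desired ACC set for $\mld(X,B)$.

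The main obstacle is controlling the new coefficient $1-a_k$, which is not a priori in a DCC set uniform across the pairs under consideration. I would address this by strengthening the inductive hypothesis to assert that the full collection of exceptional log discrepancies $\leq 1$, not merely the mld, lies in a common ACC set $\Ii'=\Ii'(N,\Ii)$. Granting this, $1-a_k$ lies in the DCC set $1-\Ii'$ and the inductive step closes. To prove the strengthened statement one bounds each $a_j$ by extracting all of $E_1,\ldots,\widehat{E_j},\ldots,E_k$ so that $E_j$ realizes the mld of the resulting enc pair, and then invokes Theorem \ref{thm: acc mld enc dim 3}; the accumulated coefficients $1-a_i$ for $i\neq j$ are controlled inductively, via a careful joint induction on $(N,k)$ and on the index $j$. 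Setting up this joint induction so that every extraction only ever uses previously-established ACC bounds, and verifying that the needed $\mathbb{Q}$-factorial divisorial extractions exist and preserve klt in dimension $3$, is the technical crux of the argument.
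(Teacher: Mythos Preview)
Your overall strategy—reduce to the enc case and invoke Theorem~\ref{thm: acc mld enc dim 3}—is exactly what the paper does (this is Lemma~\ref{lem: e to n}, whose content is Lemma~\ref{lem: reduce to enc}). The $N=0$ base is also handled the same way, via \cite[Theorem~1.1]{HLL22}. But your inductive step has a genuine gap that you have not closed.

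The problem is the circularity you yourself flag. To bound $a_j$ you want to extract $E_1,\ldots,\widehat{E_j},\ldots,E_k$ crepantly and apply the enc theorem, but the new coefficients $1-a_i$ for $i\neq j$ are only in a DCC set once you already know the $a_i$ lie in an ACC set. No ordering of a ``joint induction on $(N,k)$ and $j$'' breaks this: inducting upward on $j$ leaves $a_{j+1},\ldots,a_k$ uncontrolled when you need to extract them, and inducting downward leaves $a_1,\ldots,a_{j-1}$ uncontrolled. The difficulty is that in a violating sequence some of the $a_{i,j}$ may be increasing and others decreasing simultaneously, and the naive extraction cannot separate these.

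The paper's Lemma~\ref{lem: reduce to enc} resolves this as follows. Argue by contradiction with a sequence $(X_i,B_i)$; after passing to a subsequence, partition the indices $j$ into those for which $a_{i,j}$ is strictly increasing and those for which it is decreasing. Extract the divisors with \emph{decreasing} $a_{i,j}$ crepantly—their coefficients $1-a_{i,j}$ are then genuinely DCC. For the divisors with \emph{increasing} $a_{i,j}$, do not use the crepant coefficients $1-a_{i,j}$ at all: instead use the \emph{limit} values $1-a_j$ with $a_j:=\lim_i a_{i,j}$, which form a finite set. One then needs to choose which single divisor to leave unextracted so that its log discrepancy with respect to this modified (non-crepant) boundary still lies in $[a_{i,j_i},a_{j_i})$; this is exactly the content of \cite[Lemma~5.3]{Liu18}, and is the missing ingredient in your sketch. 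Note also that there may be many exceptional divisors sharing each value $a_{i,j}$ (only the number of distinct \emph{values} is bounded by $N$), which the paper tracks with a further index $l$; your sketch implicitly assumes at most $N$ divisors.
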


Theorem \ref{thm: acc mld 3dim bounded ld number} is considered to be much stronger than Theorem \ref{thm: acc mld enc dim 3} as a result on the ACC conjecture for mlds, and the class of singularities in Theorem \ref{thm: acc mld 3dim bounded ld number} is much larger than the class of enc singularities. It is clear that the local Cartier indices of these singularities in Theorem \ref{thm: acc mld 3dim bounded ld number} are unbounded when their mlds have a positive lower bound, while they are expected to be bounded for enc pairs (cf. Conjecture \ref{conj: index of enc}). Moreover, when $N=0$, Theorem \ref{thm: acc mld 3dim bounded ld number} implies the ACC for mlds of terminal threefolds \cite[Theorem 1.1]{HLL22} which is beyond Theorem \ref{thm: acc mld enc dim 3}, and when $N=1$, Theorem \ref{thm: acc mld 3dim bounded ld number} implies Theorem \ref{thm: acc mld enc dim 3}. 

Nevertheless, in order to prove Theorem \ref{thm: acc mld enc dim 3}, we have to prove Theorem \ref{thm: acc mld 3dim bounded ld number}, and the proofs of these two theorems are intertwined with each other (see Sections 2 and 6 for details).

\medskip

\noindent\textbf{Further remarks and conjectures}. We remark that, in the proof of the ACC for mlds in dimension $2$ \cite{Ale93}, there are two cases: 

\medskip

\noindent\textbf{Case 1}. The dual graph of the minimal resolution is bounded.

\medskip

\noindent\textbf{Case 2}. The dual graph of the minimal resolution is unbounded.

\medskip

These two cases are treated in different ways in \cite{Ale93}. Note that the minimal resolution of a surface is nothing but its terminalization. Therefore, if we regard ``terminalization" as a kind of ``minimal resolution" in high dimensions, then Theorem \ref{thm: acc mld 3dim bounded ld number} implies that the ACC for mlds holds in dimension $3$ whenever the dual graph of the terminalization is bounded. In other words, \textbf{Case 1} in dimension $3$ is proved. Moreover, one may show a stronger result for \textbf{Case 1}: any log discrepancy which is not larger than $1$ belongs to a finite set (cf. \cite[Lemma A.2]{HL23b},\cite{HL24}). In the same fashion, we can also show this holds in dimension $3$. %Corollary \ref{cor: acc ld 3dim bounded ld number} is a consequence of Theorem \ref{thm: acc mld 3dim bounded ld number}. 

\begin{cor}\label{cor: acc ld 3dim bounded ld number}
Let $N$ be a non-negative integer, and $\Ii\subset [0,1]$ a DCC set. Then there exists an ACC set $\Ii'$ depending only on $N$ and $\Ii$ satisfying the following. Assume that $(X,B)$ is a klt pair of dimension $3$, such that 
\begin{enumerate}
    \item $\coeff(B)\subseteq\Ii$, and
    \item there are at most $N$ different (exceptional) log discrepancies of $(X,B)$ that are $\le 1$, i.e.
    $$\#(\{a(E,X,B)\mid E\text{ is exceptional over }X\}\cap [0,1])\le N,$$
\end{enumerate}
then $\{a(E,X,B)\mid E\text{ is exceptional over }X\}\cap [0,1]\subset\Ii'$.
\end{cor}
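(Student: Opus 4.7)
The plan is to establish Corollary~\ref{cor: acc ld 3dim bounded ld number} by strengthening the proof of Theorem~\ref{thm: acc mld 3dim bounded ld number} so as to bound each individual exceptional log discrepancy $\leq 1$ of $(X,B)$, rather than only their minimum. This is the dimension-three analogue of the surface statement of \cite[Lemma A.2]{HL20}, and will be carried out by directly refining the proof of Theorem~\ref{thm: acc mld 3dim bounded ld number} rather than using it as a black box.

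Concretely, let $E_1,\ldots,E_k$, $k\leq N$, be the exceptional divisors over $X$ with $a_i:=a(E_i,X,B)\in(0,1]$. The plan is to extract these divisors simultaneously through a projective birational morphism $f\colon Y\to X$ with $\Exc(f)=\bigcup E_i$ and $Y$ $\QQ$-factorial, which exists by standard MMP results. Writing $K_Y+B_Y=f^*(K_X+B)$ with $B_Y=f^{-1}_*B+\sum(1-a_i)E_i$, each $a_i$ is realized as $1-\coeff_{E_i}(B_Y)$, and the pair $(Y,B_Y)$ is klt with no exceptional log discrepancy $\leq 1$. The bound $k\leq N$ controls the combinatorics of $\Exc(f)$, and the intersection-theoretic constraints $(K_Y+B_Y)\cdot C=0$ for curves $C\subset\Exc(f)$, together with $B\in\Gamma$ DCC, are then expected to force each coefficient $1-a_i$ into an ACC set---equivalently, each $a_i$ into an ACC set depending only on $N$ and $\Gamma$.

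The main obstacle is verifying that the proof of Theorem~\ref{thm: acc mld 3dim bounded ld number} actually delivers ACC for each individual $a_i$, rather than only for the minimum. The key point is that the intersection-theoretic and combinatorial bounds produced there, once unpacked, are expressed as constraints on each individual coefficient in $B_Y$; taking the union of the resulting ACC sets over $i=1,\ldots,N$ gives the desired $\Gamma'$. A subtle point deserving care is the case in which several $E_i$ share a common center or form an intertwined exceptional configuration on the terminalization, where the relations may not cleanly separate the $a_i$. Here the remedy is an additional inductive extraction---removing the $E_i$ one at a time in order of increasing $a_i$, and applying Theorem~\ref{thm: acc mld 3dim bounded ld number} iteratively on the intermediate models, so that at the $i$-th stage the value $a_i$ is controlled by the DCC set consisting of $\Gamma$ enlarged by the ACC-controlled coefficients $\{1-a_1,\ldots,1-a_{i-1}\}$ from the previous stages.
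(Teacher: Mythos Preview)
Your proposal contains a correct argument, but not the one you lead with. The primary plan---opening up the proof of Theorem~\ref{thm: acc mld 3dim bounded ld number}, extracting all the $E_i$ at once, and reading off per-coefficient ACC constraints from ``intersection-theoretic relations'' $(K_Y+B_Y)\cdot C=0$ on the extraction---does not match how that theorem is actually proved. Its proof runs through the enc reduction (Lemma~\ref{lem: reduce to enc}) and the induction on $l$ among Theorems~\hyperlink{thm: E}{E}, \hyperlink{thm: N}{N}, \hyperlink{thm: C}{C}; there is no stage producing numerical identities that isolate and bound the individual $1-a_i$. There is also a slip in your setup: the hypothesis bounds the number of \emph{distinct values} $a(E,X,B)\le 1$, not the number of exceptional divisors, so writing $E_1,\ldots,E_k$ with $k\le N$ is not justified.

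Your fallback---the inductive extraction---is correct and stands on its own, once phrased carefully: at each step extract \emph{all} exceptional divisors realizing the current minimal value $a=\mld(X,B)\le 1$; by Theorem~\ref{thm: acc mld 3dim bounded ld number} this $a$ lies in an ACC set depending only on $N,\Gamma$, so the new boundary coefficient $1-a$ lies in a DCC set depending only on $N,\Gamma$. The crepant model then has at most $N-1$ distinct exceptional log discrepancies $\le 1$, and one finishes by induction on $N$. This uses Theorem~\ref{thm: acc mld 3dim bounded ld number} as a black box, contrary to your stated intention.

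The paper's own proof is different and shorter: it invokes Theorem~\ref{thm: reduction to enc acc mld} (proved via Lemma~\ref{lem: reduce to enc}), which says directly that if the union of these log discrepancies failed ACC one could manufacture a sequence of enc pairs with strictly increasing mlds and DCC coefficients, contradicting Theorem~\ref{thm: acc mld enc dim 3}. So the paper bypasses the induction on $N$ by a single application of the enc reduction; your inductive route is a legitimate alternative that trades that one reduction for $N$ iterated uses of Theorem~\ref{thm: acc mld 3dim bounded ld number}.
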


We propose two conjectures for enc pairs. Conjecture \ref{conj: index of enc} is related to the local K-stability theory (cf. \cite[Theorem 1.5, Conjecture 1.6]{HLQ23}). Roughly speaking, we expect that enc singularities have bounded local volumes (cf. \cite{LLX20}):

\begin{conj}[Local boundedness for enc pairs]\label{conj: index of enc}
Let $d$ be a positive integer, $\epsilon$ a positive real number, and $\Ii\subset[0,1]$ a DCC set. Then there exists a positive real number $\delta$ depending only on $d$ and $\Ii$ satisfying the following. Assume that $(X\ni x,B)$ is an enc germ of dimension $d$ such that $\coeff(B)\subseteq\Ii$. Then
\begin{enumerate}
    \item $(X\ni x,B)$ admits a $\delta$-plt blow-up,
    \item if $\mld (X\ni x,B)\ge \epsilon$, then  the local volume $\widehat\vol(X\ni x,B)$ is bounded away from $0$.
\end{enumerate}
\end{conj}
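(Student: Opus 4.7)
\emph{Plan.} The backbone of both parts is to extract a Koll\'ar-type component from the enc data. By Xu's theorem plt blow-ups exist for any klt singularity, but the enc hypothesis singles out a canonical choice: the unique prime divisor $E$ over $X$ with $a(E, X, B) < 1$, which necessarily computes $\mld(X \ni x, B)$. Setting $a := a(E, X, B) \in (0, 1)$ and extracting $E$ via a divisorial contraction $f \colon Y \to X$, one obtains the crepant identity $K_Y + B_Y + (1 - a) E = f^*(K_X + B)$, and the enc condition forces $(Y, B_Y + E)$ to be plt with $E$ as its unique log canonical place: for any exceptional prime divisor $F \ne E$ over $Y$, the identity $a(F, Y, B_Y + E) = a(F, X, B) - a \cdot v_F(\pi^* E)$ combined with $a(F, X, B) \ge 1$ gives the plt condition.

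For part (1), I would upgrade to $\delta$-plt uniformly through plt adjunction. Define $(E, B_E)$ by $K_E + B_E = (K_Y + B_Y + E)|_E$; Shokurov's adjunction formula ensures that $B_E$ has coefficients in a DCC set $\Ii'$ depending only on $\Ii$, and the $\delta$-plt property of $f$ near $E$ is tightly controlled by $\delta$-klt properties of $(E, B_E)$. A uniform positive lower bound for $\mld(E, B_E)$ should follow from an ACC-type input in dimension $d - 1$ (supplied in dimension three by Theorem \ref{thm: acc mld enc dim 3}), together with the elementary observation that klt mlds arising from DCC boundary coefficients cannot accumulate to $0$ without violating ACC for $\mld$.

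For part (2), note that $E$ is $f$-antiample, being the unique $f$-exceptional divisor contracted to a point, hence $-E|_E$ is ample on $E$. Restricting $K_Y + B_Y + E = f^*(K_X + B) + a E$ to $E$ yields $K_E + B_E \sim_{\mathbb Q} a E|_E$, so $-(K_E + B_E)$ is ample and $(E, B_E)$ is a Fano-type pair of dimension $d - 1$. Combined with part (1), this exhibits $(E, B_E)$ as $\delta$-klt Fano with DCC coefficients. Birkar's BAB theorem then places these pairs in a bounded family, so the Cartier index of $K_E + B_E$ on $E$ is bounded. The hypothesis $\mld(X \ni x, B) \ge \epsilon$ gives $a \ge \epsilon$, and a descent argument exploiting the relative Picard number one of $f$ and the $\mathbb Q$-factoriality of $Y$ then passes from the bounded Cartier index on $E$ to the local Cartier index of $X \ni x$.

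The hardest step will be the uniform lower bound on $\mld(E, B_E)$ in part (1): because $B_E$ lies only in a DCC set rather than a finite one, the mlds of the adjunction pairs could a priori accumulate to $0$, and ruling this out is an ACC-from-below statement one dimension lower whose difficulty is comparable to the conjecture itself. A secondary obstacle in part (2) is the descent from a bounded family of $(E, B_E)$ to a bound on the local class group of $X \ni x$; matching $\mathbb Q$-Cartier Weil divisors on $X$ near $x$ with divisor classes on $E$ in a way that controls torsion is typically achieved via Koll\'ar--Shokurov connectedness together with a careful analysis of the inverse adjunction coefficients.
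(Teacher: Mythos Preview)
The statement you chose, Conjecture~\ref{conj: index of enc}, is presented in the paper as an \emph{open conjecture}; the paper does not prove it. What the paper records is partial evidence: both parts hold for surfaces, and in dimension three only special cases are established (e.g., a canonical blow-up extracting $E$ exists when $\mld(X\ni x,B)$ lies below the threefold canonical-threshold gap, and the local index is bounded for the enc cyclic-quotient and cDV-quotient cases treated in Theorems~\ref{thm: enc smooth cover case}, \ref{thm: enc cDV cover case} and Appendix~\ref{appendix: cDV quotient classificaiton}). There is therefore no proof in the paper to compare your proposal against.

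On the proposal itself: you correctly isolate the decisive difficulty in part~(1), namely that a uniform lower bound on $\mld(E,B_E)$ with $B_E$ in a DCC set is an ACC-from-below statement in dimension $d-1$ of strength comparable to the conjecture. But there is an earlier gap you do not flag. Your claim that the enc hypothesis forces $(Y,B_Y+E)$ to be plt does not follow from the identity you write: from
\[
a(F,Y,B_Y+E)=a(F,X,B)-a\cdot\mult_F E
\]
and $a(F,X,B)>1$ you cannot conclude positivity without also knowing $\mult_F E<1/a$, and the enc condition gives no such bound. Xu's theorem guarantees that \emph{some} Koll\'ar component exists over a klt germ, not that the specific divisor computing the mld is one; for a general klt singularity the mld-computing divisor need not yield a plt extraction. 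The paper's partial results in dimension three (e.g., Lemma~\ref{lem: can extract divisor computing ct that is terminal strong version} and the discussion following Conjecture~\ref{conj: acc mld enc}) produce only a terminal or canonical blow-up under extra hypotheses, which is different from and does not directly imply the $\delta$-plt blow-up you need. Your outline of part~(2) via BAB is the expected strategy (and is essentially how \cite{HLS19} passes from $\delta$-plt blow-ups to index bounds), but it is contingent on part~(1), so the whole argument remains conjectural.
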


\begin{conj}[ACC for mlds of enc pairs]\label{conj: acc mld enc}
Let $d$ be a positive integer, and $\Ii\subset [0,1]$ a set of real numbers. Let
$$\textrm{eMLD}_{d}(\Ii):=\{\mld(X,B)\mid (X,B)\text{ is enc, }\dim X=d, \coeff(B)\subseteq\Ii\}.$$
\begin{enumerate}
    \item If $\Ii$ satisfies the DCC, then $\textrm{eMLD}_{d}(\Ii)$ satisfies the ACC. 
    \item If $\Ii$ is a finite set, then $\textrm{eMLD}_{d}(\Ii)$ is a discrete set away from $0$.
    \item[ (2')] If $\Ii$ is a finite set, then $\textrm{eMLD}_{d}(\Ii)$ satisfies the ACC.
    \end{enumerate}
\end{conj}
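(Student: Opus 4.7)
The plan is to reduce Conjecture \ref{conj: acc mld enc} to the boundedness of Cartier indices for enc singularities (Conjecture \ref{conj: index of enc}) combined with standard boundedness and ACC results for klt log Calabi--Yau pairs in one dimension lower.

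First I would, for any enc germ $(X\ni x, B)$ of dimension $d$, extract the unique exceptional prime divisor $E$ over $X$ with $a(E,X,B)<1$ via a plt blow-up $f\colon Y\to X$. The pair $(Y, E+B_Y)$ defined by $K_Y+E+B_Y=f^{*}(K_X+B)$ is then plt with $-E$ ample over $x$, and by adjunction $(E, B_E):=(E,\Diff_E(B_Y))$ is a klt log Calabi--Yau pair of dimension $d-1$, since $f$ contracts $E$ to the point $x$ and hence $(K_Y+E+B_Y)|_E\equiv 0$. The enc hypothesis forces all exceptional divisors over $x$ other than $E$ to have log discrepancy $\geq 1$, so $\mld(X\ni x,B)=a(E,X,B)=1-\mult_E B_Y$, and the single coefficient $\mult_E B_Y$ is the quantity to be controlled.

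Next, by Shokurov--Koll\'ar adjunction combined with the ACC for log canonical thresholds, the coefficients of $B_E$ lie in a DCC set determined only by $d$ and $\Ii$. The technical heart of the argument is to show that $(E,B_E)$ sits in a bounded family and that the local Cartier index of $X\ni x$ along $E$ is uniformly bounded, i.e.\ to prove Conjecture \ref{conj: index of enc}(2); granted this, the possible values of $\mult_E B_Y$ lie in a DCC set of the shape $\tfrac{1}{N}(\mathbb{Z}_{\geq 0}+(\text{DCC combinations of elements of }\Ii))\cap[0,1]$, which yields the ACC in (1) and the discreteness away from $0$ in (2) and (2'). The implication (2')$\Rightarrow$(1) is then a standard Koll\'ar--Shokurov limit argument converting a strictly increasing sequence of mlds with DCC coefficients into one with finite coefficients.

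The main obstacle is Conjecture \ref{conj: index of enc} itself. Producing a $\delta$-plt blow-up with uniform $\delta$, and then bounding the local index, requires BAB-type boundedness for $\epsilon$-lc log Calabi--Yau pairs of dimension $d-1$, or equivalently a strong index conjecture for such pairs. In dimension $3$ (Theorem \ref{thm: acc mld enc dim 3}) this is tractable because log Calabi--Yau surfaces and their Cartier indices admit an explicit classification, but in higher dimension no such classification is available, and I expect this to be the genuine bottleneck: the conjecture should remain out of reach until boundedness of log Calabi--Yau pairs of fixed dimension with a uniform positive lower bound on mld is established.
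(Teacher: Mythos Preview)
The statement you are addressing is a \emph{conjecture}; the paper does not claim a proof in arbitrary dimension. Your reduction of Conjecture~\ref{conj: acc mld enc} to Conjecture~\ref{conj: index of enc} is exactly the implication the paper records immediately after stating the two conjectures (citing \cite[Theorem 1.3]{HLS19}), so at the level of overall strategy your proposal matches what the paper says about the general case. You are also right that Conjecture~\ref{conj: index of enc} is the bottleneck.

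Where your proposal diverges from the paper is in dimension $3$. You assert that the threefold case is tractable ``because log Calabi--Yau surfaces and their Cartier indices admit an explicit classification,'' i.e.\ that one can bound the local index and then read off the ACC. The paper's proof of Theorem~\ref{thm: acc mld enc dim 3} does \emph{not} proceed this way, and in fact cannot: when $X\ni x$ is a terminal $cA/n$ point the local index $n$ is unbounded even among enc germs, so Conjecture~\ref{conj: index of enc}(2) is genuinely open in dimension $3$ (the paper says so explicitly in the discussion following the conjectures). Instead the paper runs an induction on the mld level (Theorems~\hyperlink{thm: E}{E}$_l$, \hyperlink{thm: N}{N}$_l$, \hyperlink{thm: C}{C}$_l$) together with a case split on the singularity type of $X$ and of its index-one cover. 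The strictly canonical case uses the cone theorem and bounded complements (Theorem~\ref{thm: acc mld enc dim 3 strictly canonical case}); the terminal case uses the ACC for canonical thresholds, functional pairs in the sense of \cite{HLS19}, and explicit weighted blow-ups on $cA/n$ points (Theorem~\ref{thm: enc ACC fin coeff}); the non-canonical case splits further according to whether the index-one cover is smooth, isolated cDV, or strictly canonical, the cDV case requiring a substantial classification (Appendix~\ref{appendix: cDV quotient classificaiton}). None of these steps yields a uniform bound on the Cartier index.

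So your plan is a correct \emph{conditional} argument already noted in the paper, but it is strictly stronger input than what the paper actually uses in dimension $3$, and your description of how the threefold case goes through does not reflect the paper's method. A minor point: your formula $K_Y+E+B_Y=f^{*}(K_X+B)$ forces $a(E,X,B)=0$; what you want is $K_Y+B_Y+(1-a(E,X,B))E=f^{*}(K_X+B)$ with $B_Y$ the strict transform, and then work with the plt pair $(Y,B_Y+E)$ separately.
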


We refer the reader to \cite{Zhu24} on some related works on local K-stability theory and mlds.

\smallskip

Conjecture \ref{conj: acc mld enc}(2') is a weak form of Conjecture \ref{conj: acc mld enc}(2). We list it separately as we only need to assume it instead of Conjecture \ref{conj: acc mld enc}(2) in many results of this paper.

By \cite[Theorem 1.3]{HLS24}, Conjecture \ref{conj: index of enc} implies Conjecture \ref{conj: acc mld enc}. In particular, in dimension $4$, either Conjecture \ref{conj: index of enc} or \ref{conj: acc mld enc} implies the termination of flips by Theorem \ref{thm: enc and tof dim 4}. 

There is some evidence towards Conjectures \ref{conj: index of enc} and \ref{conj: acc mld enc}. We may prove both conjectures for surfaces. In dimension $3$, Conjecture \ref{conj: acc mld enc}(1) is nothing but Theorem \ref{thm: acc mld enc dim 3}, and we also have the following evidence.
\begin{enumerate}
    \item Let $\epsilon_0:=1-\sup\{t\mid t\in\CT(3,\Ii,\Zz_{\ge 1})\}$, where $\CT(3,\Ii,\Zz_{\ge 1})$ is a set of threefold canonical thresholds (see Theorem \ref{thm: 3fold acc ct}). Then $(X\ni x,B)$ admits a canonical blow-up which extracts the unique exceptional prime divisor computing $\mld(X\ni x,B)$ if $\mld(X\ni x,B)<\epsilon_0$. This proves a special case of Conjecture \ref{conj: index of enc}.
    \item To prove Conjecture \ref{conj: acc mld enc}(2), we are only left to prove the following two cases:
    \begin{enumerate}
        \item The case when the index $1$ cover of $X$ is strictly canonical.
        \item The case when $X$ is terminal.
    \end{enumerate}
    All other cases follow from our proofs in this paper. 
    \item When $\Ii=\{0\}$, since the mlds under case (2.a) belong to the set $\{\frac{1}{n}\mid n\in\Zz_{\ge 1}\}$ while (2.b) can never happen, we could get Conjecture \ref{conj: acc mld enc}(2). In addition, if we assume the index conjecture of Shokurov (cf. \cite[Conjecture 6.3]{CH21}), then we can get Conjecture \ref{conj: index of enc}(2) as well.
\end{enumerate}

Finally, we remark that enc singularities are deeply related to exceptional Fano varieties and Calabi-Yau varieties with small mlds or large indices via mirror symmetry. They are also tightly connected to the boundedness of log Calabi-Yau varieties. See Section \ref{sec: remark} for details.

\medskip

\noindent\textbf{Acknowledgement}. The authors would like to thank Paolo Cascini, Guodu Chen, Christopher D. Hacon, Yong Hu, Chen Jiang, Junpeng Jiao, Zhan Li, Yuchen Liu, Yujie Luo, Fanjun Meng, Lu Qi, V. V. Shokurov, Lingyao Xie, and Qingyuan Xue for useful discussions.  Liu also thanks useful discussions with Louis Esser, Burt Totaro, and Chengxi Wang on useful discussions on Remarks \ref{rem: enc and exceptional pair} and \ref{rem: enc and cy varieties} after the first version of the paper. 
Part of the work was done during the visit of the second author to the University of Utah in December 2021, and March and April 2022, and the second author would like to thank their hospitality.

Han is affiliated with LMNS at Fudan University. Han is supported by the National Key Research and Development Program of China (\#2023YFA1010600, \#2020YFA0713200), and NSFC for Excellent Young Scientists (\#12322102). Liu is supported by the National Key Research and Development Program of China (\#2023YFA1014400).

\section{Sketch of the proofs of theorems \ref{thm: acc mld enc dim 3} and \ref{thm: acc mld 3dim bounded ld number}}

Since the proofs of Theorems \ref{thm: acc mld enc dim 3} and \ref{thm: acc mld 3dim bounded ld number} are quite complicated, for the reader's convenience, we sketch a proof of them in this section. To prove Theorems \ref{thm: acc mld enc dim 3} and \ref{thm: acc mld 3dim bounded ld number}, we need to apply induction on the lower bound of mlds. More precisely, we need to prove the following theorems for positive integers $l$:

\medskip

\noindent {\bf \hypertarget{thm: E}{Theorem E$_l$}} (cf. Theorem \ref{thm: acc mld enc dim 3}){\bf.}
{\it 
Let $l$ be a positive integer, $\Ii\subset [0,1]$ a DCC set, and
$$\mathcal{E}(l,\Ii):=\left\{(X,B)\Bigm| (X,B)\text{ is }\Qq\text{-factorial enc,}\dim X=3, \coeff(B)\subseteq\Ii,\mld(X,B)>\frac{1}{l}\right\},$$
then $\{\mld(X,B)\mid (X,B)\in \mathcal{E}(l,\Ii)\}$ satisfies the ACC.}

\medskip

\noindent {\bf \hypertarget{thm: N}{Theorem N$_l$}} (cf. Theorem \ref{thm: acc mld 3dim bounded ld number}){\bf.} 
{\it Let $l$ and $N$ be positive integers, $\Ii\subset [0,1]$ a DCC set, and
\begin{align*}
    \mathcal{N}(l,N,\Ii):=\Bigg\{(X,B)\Biggm|
    \begin{array}{r@{}l}
        (X,B)\text{ is a threefold klt pair, } \coeff(B)\subseteq\Ii,\frac{1}{l}<\mld(X,B)<1,\\
        \#(\{a(E,X,B)\mid E\text{ is exceptional over }X\}\cap [0,1])\le N
    \end{array}\Bigg\},
    \end{align*}
then $\{\mld(X,B)\mid (X,B)\in  \mathcal{N}(l,N,\Ii)\}$ satisfies the ACC.}

\medskip

To make our proof more clear, we introduce the following auxiliary theorem: 

\medskip

\noindent {\bf \hypertarget{thm: C}{Theorem C$_l$}}{\bf.} 
{\it 
Let $l$ be a positive integer, $\Ii\subset [0,1]$ a DCC set, and
\begin{align*}
    \mathcal{C}(l,\Ii):=\Bigg\{(X,B)\Biggm|
    \begin{array}{r@{}l}
        (X\ni x,B)\text{ is a }\Qq\text{-factorial threefold enc germ, } \coeff(B)\subseteq\Ii,\\
       \mld(X)<1, \mld(X,B)>\frac{1}{l}, \text{$\tilde{X}\ni \tilde{x}$ is strictly canonical,}\\ \text{where  $\pi:(\tilde{X}\ni \tilde{x})\to (X\ni x)$ is the index }1\text{ cover of }X\ni x
    \end{array}\Bigg\},
    \end{align*}
    then $\{\mld(X,B)\mid (X,B)\in \mathcal{C}(l,\Ii)\}$ satisfies the ACC.} Here ``strictly canonical" means canonical but not terminal.

\medskip

We will prove Theorems  \hyperlink{thm: E}{E}, \hyperlink{thm: N}{N}, and \hyperlink{thm: C}{C}\footnote{Regarding the labels of the theorems: \hyperlink{thm: E}{E} stands for the initial of ``enc", \hyperlink{thm: N}{N} stands for the additional restriction ``$\le N$", and \hyperlink{thm: C}{C} stands for the initial of ``canonical" as in ``strictly canonical". Coincidentally, these labels together also form the word enc.}, that is, Theorems \hyperlink{thm: E}{E}$_l$, \hyperlink{thm: N}{N}$_l$,  and \hyperlink{thm: C}{C}$_l$ for any positive integer $l$, in the following way:
\begin{enumerate}
    \item Theorem \hyperlink{thm: E}{E}$_l$ implies Theorem \hyperlink{thm: N}{N}$_l$; cf. Lemma \ref{lem: e to n}.
    \item Theorem \hyperlink{thm: C}{C}$_l$ implies Theorem \hyperlink{thm: E}{E}$_l$; cf. Lemma \ref{lem: c to e}, and Theorems \ref{thm: acc mld enc dim 3 strictly canonical case},  \ref{thm: enc ACC fin coeff}, \ref{thm: enc terminal reduce to fin coeff}, \ref{thm: enc smooth cover case}, and \ref{thm: enc cDV cover case}.
    \item Theorem \hyperlink{thm: N}{N}$_{l-1}$ imply Theorem \hyperlink{thm: C}{C}$_l$; cf. Lemma \ref{lem: n to c}. 
\end{enumerate}

\medskip

The proof of (1) relies on the following observation: when proving by using contradiction, we may construct a DCC set $\Ii'$ depending only on $N$ and $\Ii$, such that for any $(X,B)\in\mathcal{N}(l,N,\Ii)$, there always exists $(X',B')\in\mathcal{E}(l,\Ii')$ such that $\mld(X',B')$ belongs to an ACC set if and only if $\mld(X,B)$ belongs to an ACC set. For such construction, one needs to look into different birational models of $(X,B)$ which only extracts non-canonical places of $(X,B)$. Indeed, the proof also works in higher dimensions and its idea is applied to prove Theorems \ref{thm: enc and tof}, \ref{thm: enc and tof dim 4}, and \ref{thm: strong lsc+acc to tof} as well. We refer to Lemma \ref{lem: reduce to enc} for more details.

\medskip

To prove (2), for any $(X,B)\in\mathcal{E}(l,\Ii)$, let $E$ be the unique exceptional prime divisor over $X$ such that $a(E,X,B)=\mld(X,B)$, and $x$ the generic point of $\Center_{X}(E)$. We may assume that $x$ is a closed point (see Lemma \ref{lem: acc enc center curve case}). There are three cases: 
\begin{enumerate}
    \item[(2.1)] $X\ni x$ is strictly canonical,
    \item[(2.2)] $X\ni x$ is not canonical, and
    \item[(2.3)] $X\ni x$ is terminal.
\end{enumerate}

We prove (2.1) by applying the cone theorem and the boundedness of complements (Theorem \ref{thm: acc mld enc dim 3 strictly canonical case}).

To prove (2.2), let $(\tilde X\ni \tilde{x})\rightarrow (X\ni x)$ be the index $1$ cover of $X\ni x$, then there are three sub-cases:
\begin{enumerate}
    \item[(2.2.1)] $\tilde X$ is strictly canonical,
    \item[(2.2.2)] $\tilde X$ is smooth, and
    \item[(2.2.3)] $\tilde X$ is terminal and has an isolated cDV singularity.
\end{enumerate}

The case (2.2.1) is nothing but Theorem \hyperlink{thm: C}{C}$_l$. 

In the case of (2.2.2), $X$ has toric singularities. Since $X$ is enc, one can show that the degree of the cover $\tilde X\rightarrow X$ is bounded from above, and the ACC for $\mld(X,B)$ immediately follows (see Theorem \ref{thm: enc smooth cover case}).

 We are left to prove (2.2.3), which is the most tedious part of the whole paper. Here we need to apply the classification of cDV singularities \cite{Mor85} to classify enc cDV (cyclic) quotient singularities $X\ni x$. Our ideas are inspired by the ones in the classification of threefold terminal singularities \cite{Mor85,Rei87} and the (rough) classification of  threefold ``nearly terminal" singularities \cite{Jia21,LX21,LL22}. The major difference between our classification and the previous ones is that the mlds of enc singularities cannot be assumed to be close to $1$: in fact, they can be arbitrarily small. This makes most computations in \cite{Rei87,Jia21,LX21,LL22} no longer work. A key observation here is that enc cDV quotient singularities with $\mld(X\ni x)\in (\frac{1}{l},\frac{1}{l-1}]$ share similar properties and will be much easier to classify. Indeed, we will show that the local Cartier indices of these $X\ni x$ are (almost) bounded. On the other hand, enc cDV quotient singularities with $\mld(X\ni x)\in (\frac{1}{l-1},1]$ can be classified by induction on $l$. This will imply (2.2.3), and we conclude the proof of (2.2). See Section 6.2 for more details.

The proof of (2.3) is very tricky. When $\Ii$ is a finite set, we can apply the theory of functional pairs introduced in \cite{HLS24,HLQ21} and carefully construct some weighted blow-ups to prove this case (Theorem \ref{thm: enc ACC fin coeff}). The key point is that the unique divisor $E$ over $X\ni x$ which computes $\mld(X,B)$ must also compute the canonical threshold $\ct(X,0;B)$, and the latter satisfies the ACC (\cite[Theorem 1.7]{HLL22}, \cite[Theorem 1.1]{Che22}). However, for the arbitrary DCC coefficient case, we are unable to prove it directly. Nevertheless, with some clever arguments, we can reduce it to the finite coefficient case (although possibly losing the condition that $X$ is terminal). See Theorem \ref{thm: enc terminal reduce to fin coeff} for more details. Since (2.3) is the only case left to prove in (2), the DCC coefficient case is also automatically resolved. This concludes the proof of (2.3), and hence of (2). 

\medskip

To prove (3), we may assume that $l\ge 2$. Let $K_{\tilde X}+\tilde B$ be the pullback of $K_X+B$, then we may show that
$$\mld(\tilde X,\tilde B)\in \{a(\tilde E,\tilde X,\tilde B)\le 1\mid \tilde E\text{ is exceptional over }\tilde X\}
\subseteq\{2\,\mld(X,B),\dots,(l-1)\,\mld(X,B)\}$$
and the latter is a finite set with cardinality $l-2$. Thus we only need to show that $\mld(\tilde X,\tilde B)$ belongs to an ACC set, which follows from Theorem \hyperlink{thm: N}{N}$_{l-1}$ as $\mld(\tilde X,\tilde B)\ge 2\,\mld(X,B)>\frac{2}{l}\ge\frac{1}{l-1}$. This finishes the proof of (3).

\medskip

To summarize, we may show Theorems \hyperlink{thm: E}{E}, \hyperlink{thm: N}{N}, and \hyperlink{thm: C}{C} by induction on $l$. Now Theorems \ref{thm: acc mld enc dim 3} and \ref{thm: acc mld 3dim bounded ld number} follow from Theorem \hyperlink{thm: N}{N} and \cite[Theorem 1.1]{HLL22}.

\medskip

The following flowchart may also help the reader. 

\begin{table}[ht]
\caption{Flowchart of the proofs of Theorems \ref{thm: acc mld enc dim 3} and \ref{thm: acc mld 3dim bounded ld number}}\label{tbl: flowchart}
\begin{adjustbox}{width=0.88\textwidth,right}
$\xymatrix{
*+[F]\txt{$(X\ni x,B),\mld>\frac{1}{l}$\\
$\coeff(B)\subseteq\Ii$}\ar@{->}[d] & & \\
*+[F]\txt{$(X\ni x,B)$ enc?}\ar@{->}[d]_{\text{Y}}\ar@{->}[r]^{\text{N}} & *+[F]\txt{Replace $(X\ni x,B)$,$\Ii$\\ $(X\ni x,B)\rightarrow$ enc\\ (\hyperlink{thm: E}{E}$_l$ to \hyperlink{thm: N}{N}$_l$: Lemmas \ref{lem: reduce to enc}, \ref{lem: e to n})}\ar@{->}[lu] & \\
*+[F]\txt{$X$ $\Qq$-factorial?}\ar@{->}[d]_{\text{Y}}\ar@{->}[r]^{\text{N}} & *+[F]\txt{Take $\Qq$-factorialization}\ar@/_2pc/[l] & \\
*+[F]\txt{$x$ closed?}\ar@{->}[d]_{\text{Y}}\ar@{->}[r]^{\text{N}} & *+[F]\txt{Surface case\checkmark\\ (Lemma \ref{lem: acc enc center curve case})} & \\
*+[F]\txt{$\mld(X)=1$?}\ar@{->}[d]_{\text{N}}\ar@{->}[r]^{\text{Y}} & *+[F]\txt{Cone theorem$+$complement\checkmark\\ (Theorem \ref{thm: acc mld enc dim 3 strictly canonical case})} & \\
*+[F]\txt{$X$ terminal?}\ar@{->}[d]_{\text{N}}\ar@{->}[r]^{\text{Y}} & *+[F]\txt{$\Ii$ finite?}\ar@{->}[dr]_{\text{Y}}\ar@{->}[r]^{\text{N}} &*+[F]\txt{Replace $(X\ni x,B)$ and $\Ii$\\
$\Ii\rightarrow$finite\\ (\hyperlink{thm: C}{C}$_l$ to \hyperlink{thm: E}{E}$_l$: Theorem \ref{thm: enc terminal reduce to fin coeff}, Lemma \ref{lem: c to e})}\ar@/_10pc/[lluuuuu]  \\
*+[F]\txt{$\tilde X$ smooth?}\ar@{->}[d]_{\text{N}}\ar@{->}[r]^{\text{Y}} & *+[F]\txt{Show boundedness of index \checkmark\\ (Theorem \ref{thm: enc smooth cover case})} & *+[F]\txt{Uniform rational polytope+\\
construct weighted blow-ups\checkmark\\ (Theorem \ref{thm: enc ACC fin coeff})}\\
*+[F]\txt{$\tilde X$ terminal?}\ar@{->}[d]_{\text{N}}\ar@{->}[r]^{\text{Y}} & *+[F]\txt{Classify enc singularities\checkmark\\ (Section 5.2)} & \\
*+[F]\txt{$\mld(\tilde X)=1$}\ar@{->}[d]& &\\
*+[F]\txt{Replace $(X\ni x,B)$ and $\Ii$\\
$l\rightarrow l-1$\\ (\hyperlink{thm: N}{N}$_{l-1}$ to \hyperlink{thm: C}{C}$_l$: Lemma \ref{lem: n to c})}\ar@/^11pc/[uuuuuuuuu]
}$ 
\end{adjustbox}
\end{table}

\section{Preliminaries}

We will freely use the notation and definitions from \cite{KM98,BCHM10}. 

\subsection{Pairs and singularities}

\begin{defn}\label{defn contraction}
A \emph{contraction} is a projective morphism $f: Y\rightarrow X$ such that $f_*\mathcal{O}_Y=\mathcal{O}_X$. In particular, $f$ is surjective and has connected fibers.
\end{defn}

\begin{defn}\label{defn: divisorial contraction}
Let $f: Y\rightarrow X$ be a birational morphism, and $\Exc(f)$ the exceptional locus of $f$. We say that $f$ is a \emph{divisorial contraction} of a prime divisor $E$ if $\Exc(f)=E$ and $-E$ is $f$-ample.
\end{defn}

\begin{defn}[Pairs, {cf. \cite[Definition 3.2]{CH21}}] \label{defn sing}
A \emph{pair} $(X/Z\ni z, B)$ consists of a contraction $\pi: X\rightarrow Z$, a (not necessarily closed) point $z\in Z$, and an $\mathbb{R}$-divisor $B\ge 0$ on $X$, such that $K_X+B$ is $\Rr$-Cartier over a neighborhood of $z$ and $\dim z<\dim X$. If $\pi$ is the identity map and $z=x$, then we may use $(X\ni x, B)$ instead of $(X/Z\ni z,B)$. In addition, if $B=0$, then we use $X\ni x$ instead of $(X\ni x,0)$. When we consider a pair $(X\ni x, \sum_{i} b_iB_i)$, where $B_i$ are distinct prime divisors and $b_i>0$, we always assume that $x\in \Supp B_i$ for each $i$.

If $(X\ni x,B)$ is a pair for any codimension $\ge 1$ point $x\in X$, then we call $(X,B)$ a pair. A pair $(X\ni x, B)$ is called a \emph{germ} if $x$ is a closed point. We also say $X\ni x$ is a \emph{singularity} if $X\ni x$ is a germ.
\end{defn}

\begin{defn}[Singularities of pairs]\label{defn: relative mld}
 Let $(X/Z\ni z,B)$ be a pair associated with the contraction $\pi: X\to Z$, and let $E$ be a prime divisor over $X$ such that $z\in\pi(\Center_X E)$. Let $f: Y\rightarrow X$ be a log resolution of $(X,B)$ such that $\Center_Y E$ is a divisor, and suppose that $K_Y+B_Y=f^*(K_X+B)$ over a neighborhood of $z$. We define $a(E,X,B):=1-\mult_EB_Y$ to be the \emph{log discrepancy} of $E$ with respect to $(X,B)$. 
 
 For any prime divisor $E$ over $X$, we say that $E$ is \emph{over} $X/Z\ni z$ if $\pi(\Center_X E)=\bar z$. If $\pi$ is the identity map and $z=x$, then we say that $E$
 is \emph{over} $X\ni x$. We define
 $$\mld(X/Z\ni z,B):=\inf\{a(E,X,B)\mid E\text{ is over }Z\ni z\}$$
 to be the \emph{minimal log discrepancy} (\emph{mld}) of $(X/Z\ni z,B)$.
 
 Let $\epsilon$ be a non-negative real number. We say that $(X/Z\ni z,B)$ is lc (resp. klt, $\epsilon$-lc,$\epsilon$-klt) if $\mld(X/Z\ni z,B)\ge 0$ (resp. $>0$, $\ge\epsilon$, $>\epsilon$). We say that $(X,B)$ is lc (resp. klt, $\epsilon$-lc, $\epsilon$-klt) if $(X\ni x,B)$ is lc (resp. klt, $\epsilon$-lc, $\epsilon$-klt) for any codimension $\ge 1$ point $x\in X$. 
 
 We say that $(X,B)$ is \emph{canonical} (resp. \emph{terminal}, \emph{plt}) if $(X\ni x,B)$ is $1$-lc (resp. $1$-klt, klt) for any codimension $\ge 2$ point $x\in X$.% We say that $(X/Z\ni z,B)$ is \emph{canonical} (resp. \emph{terminal}) if $(X/Z\ni z,B)$ is $1$-lc (resp. $1$-klt).
 
 For any (not necessarily closed) point $x\in X$, we say that $(X,B)$ is lc (resp. klt, $\epsilon$-lc, canonical, terminal) near $x$ if $(X,B)$ is lc (resp. klt, $\epsilon$-lc, canonical, terminal) in a neighborhood of $x$. If $X$ is lc (resp. klt, $\epsilon$-lc, canonical, terminal) near a closed point $x$, then we say that $X\ni x$ is an lc (resp. klt, $\epsilon$-lc, canonical, terminal) singularity. We remark that if $(X\ni x, B)$ is lc, then $(X,B)$ is lc near $x$.
 
 We say that $(X\ni x,B)$ (resp. $(X,B)$) is \emph{strictly canonical} if $\mld(X\ni x,B)=1$ (resp. $\mld(X,B)=1$).
\end{defn}

\begin{defn}\label{defn: alct local}  Let $a$ be a non-negative real number, $(X\ni x,B)$ (resp. $(X,B)$) a pair, and $D\ge 0$ an $\Rr$-Cartier $\Rr$-divisor on $X$. We define
$$a\text{-}\lct(X\ni x,B;D):=\sup\{-\infty,t\mid t\ge 0, (X\ni x,B+tD)\text{ is }a\text{-}lc\}$$
$$\text{(resp. }a\text{-}\lct(X,B;D):=\sup\{-\infty,t\mid t\ge 0, (X,B+tD)\text{ is }a\text{-}lc\} \text{)}$$
to be the \emph{$a$-lc threshold} of $D$ with respect to $(X\ni x,B)$ (resp. $(X,B)$). We define
$$\ct(X\ni x,B;D):=1\text{-}\lct(X\ni x,B;D)$$
$$\text{(resp. }\ct(X,B;D):=\sup\{-\infty,t\mid t\ge 0, (X,B+tD)\text{ is canonical}\} \text{)}$$
to be the \emph{canonical threshold} of $D$ with respect to $(X\ni x,B)$ (resp. $(X,B)$). We define $$\lct(X\ni x,B;D):=0\text{-}\lct(X\ni x,B;D)$$ $$\text{(resp. } \lct(X,B;D):=0\text{-}\lct(X,B;D)\text{)}$$ to be the \emph{lc threshold} of $D$ with respect to $(X\ni x,B)$ (resp. $(X,B)$).
\end{defn}

\begin{thm}[{\cite[Theorem 1.6]{HLL22}}]\label{thm: alct acc terminal threefold}
Let $a\ge 1$ be a positive real number, and $\Ii\subset [0,1],~\Ii'\subset [0,+\infty)$ two DCC sets. Then the set of $a$-lc thresholds
   $$\{a\text{-}\lct(X\ni x,B;D)\mid \dim X=3, X\text{ is terminal}, \coeff(B)\subseteq\Ii, \coeff(D)\subseteq\Ii'\},$$
     satisfies the ACC.
\end{thm}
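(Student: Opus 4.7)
The plan is to argue by contradiction: suppose there exist terminal threefold germs $X_i\ni x_i$, divisors $B_i$ with $B_i\in\Ii$, and $\Rr$-Cartier $D_i\ge 0$ with $D_i\in\Ii'$, such that $t_i:=a\text{-}\lct(X_i\ni x_i, B_i;D_i)$ strictly increases to some $t_\infty$. By the definition of the $a$-lc threshold together with the existence of log resolutions, for each $i$ there is a prime divisor $E_i$ over $X_i\ni x_i$ with $a(E_i, X_i, B_i+t_iD_i)=a$. Since $a\ge 1$ and $X_i$ is terminal, $E_i$ is necessarily exceptional, $a(E_i, X_i, 0)\ge 1$, and we obtain the key identity
\[
a(E_i, X_i, 0) \;=\; a + \nu_{E_i}(B_i) + t_i\cdot \nu_{E_i}(D_i),
\]
where $\nu_{E_i}$ denotes the divisorial valuation associated to $E_i$. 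Every subsequent step aims to control the three pieces on the right-hand side.

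First I would reduce to finite coefficient sets using a uniform rational polytope argument in the style of HLS19: after a small rational perturbation of the coefficients of $B_i$ and $D_i$ within $\Ii\cup\Ii'$, the $a$-lc threshold varies piecewise-linearly in a neighborhood of each configuration where the computing divisor is fixed, so any ACC-violating sequence persists after replacing $\Ii,\Ii'$ by suitable finite subsets. Next I would extract $E_i$ via a divisorial contraction $\phi_i\colon Y_i\to X_i$ over a neighborhood of $x_i$; this is feasible since $a(E_i,X_i,0)\ge 1$ on a terminal base, so by Kawakita's classification of divisorial contractions to terminal threefold germs (combined with BCHM to handle arbitrary valuations), $\phi_i$ realizes $E_i$ as a weighted blow-up of controlled combinatorial type. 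The valuations $\nu_{E_i}(B_i)$ and $\nu_{E_i}(D_i)$ then decompose as $\sum_j w_{ij}b_{ij}$ and $\sum_j w'_{ij}d_{ij}$ with positive integer weights $w_{ij},w'_{ij}$ read off from the blow-up and coefficients $b_{ij}\in\Ii$, $d_{ij}\in\Ii'$; in particular both valuations lie in DCC sets.

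The final step applies the ACC for mlds on terminal threefolds (HLL22, Theorem 1.1) together with the fact that on a terminal threefold germ of bounded index $r$ the values $a(E_i,X_i,0)$ lie in the discrete set $\{1+k/r : k\in\Zz_{\ge 1}\}$; combined with discreteness of the local index, after passing to a subsequence $a(E_i, X_i, 0)$ stabilizes to a single value. Using DCC of $\nu_{E_i}(B_i)$ and $\nu_{E_i}(D_i)$ and the monotonicity $t_i\nearrow t_\infty$, the key identity forces $\nu_{E_i}(D_i)$ to decrease (impossible in a DCC set unless it stabilizes) and thereby $t_i$ to stabilize, contradicting strict increase. The main obstacle will be controlling the multiplicities $\nu_{E_i}(B_i)$ and $\nu_{E_i}(D_i)$ \emph{jointly}: individually they sit in DCC sets, but their coupling through one and the same weighted blow-up (which is what the key identity requires) forces one to lean on Kawakita's full classification of terminal threefold divisorial contractions plus the discreteness of log discrepancies on such singularities. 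It is exactly at this point that the hypothesis ``$X$ terminal'' becomes indispensable.
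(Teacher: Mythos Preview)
The paper does not prove this statement; it is quoted verbatim from \cite[Theorem~1.6]{HLL22} and used as a black box. So there is no ``paper's own proof'' to compare against here.

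That said, your sketch has a genuine gap at the step where you claim that ``after passing to a subsequence $a(E_i,X_i,0)$ stabilizes to a single value.'' Your justification is that on a terminal threefold germ of index $r$ the log discrepancies lie in $\{1+\tfrac{k}{r}:k\in\Zz_{\ge 1}\}$, ``combined with discreteness of the local index.'' But the Cartier index of a terminal threefold germ is \emph{not} bounded: for $cA/n$ singularities $n$ can be any positive integer, so the union over all $r$ of those sets is all of $\Qq_{>1}$, and no stabilization follows. Invoking the ACC for mlds on terminal threefolds does not help either, because $a(E_i,X_i,0)$ is the log discrepancy of the particular divisor $E_i$ that computes the $a$-lc threshold, not $\mld(X_i\ni x_i)$. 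Likewise, your appeal to Kawakita's classification presupposes that the extraction $Y_i\to X_i$ of $E_i$ has $Y_i$ terminal; for $a=1$ this is exactly \cite[Lemma~2.35]{HLL22} (quoted in the paper as Lemma~\ref{lem: can extract divisor computing ct that is terminal strong version}), but for $a>1$ it is not automatic and requires its own argument.

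The actual proof in \cite{HLL22} does proceed via the classification of terminal threefold singularities and Kawakita's divisorial contractions, so your overall strategy is in the right direction; but the coupling you flag in your last paragraph is the whole difficulty, and the paragraph before it does not dispose of it.
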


\begin{thm}[{\cite[Theorem 1.7]{HLL22}, \cite[Theorem 1.2]{Che22}}]\label{thm: 3fold acc ct}
Let $\Ii\subset[0,1]$ and $\Ii'\subset[0,+\infty)$ be two DCC sets. Then the set
$$\CT(3,\Ii,\Ii'):=\{\ct(X,B;D)\mid\dim X=3, \coeff(B)\subseteq\Ii,\coeff(D)\subseteq\Ii'\}$$
satisfies the ACC.
\end{thm}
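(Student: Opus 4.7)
The plan is to reduce Theorem \ref{thm: 3fold acc ct} to Theorem \ref{thm: alct acc terminal threefold}, the ACC for $a$-lc thresholds on terminal three-dimensional germs.

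First I would localize. Since $\ct(X,B;D)$ is determined by exceptional valuations of $(X, B+tD)$, one has
$$\ct(X,B;D) \;=\; \inf_{x}\, \ct(X \ni x, B; D),$$
where $x$ ranges over codimension $\ge 2$ points (and the infimum is achieved at one of the finitely many centers where $(X, B + \ct(X,B;D)\cdot D)$ fails to be terminal). A strictly increasing sequence in $\CT(3,\Ii,\Ii')$ therefore descends to a strictly increasing sequence of local canonical thresholds, so it suffices to prove the ACC for $\ct(X \ni x, B; D)$ on three-dimensional germs.

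Suppose for contradiction there is a strictly increasing sequence $t_i = \ct(X_i \ni x_i, B_i; D_i)$ with $B_i \in \Ii$, $D_i \in \Ii'$, and $(X_i \ni x_i, B_i)$ canonical. If $X_i \ni x_i$ is terminal for infinitely many $i$, then $t_i = 1\text{-}\lct(X_i \ni x_i, B_i; D_i)$, and Theorem \ref{thm: alct acc terminal threefold} (with $a=1$) immediately yields a contradiction. Hence I may assume each $X_i \ni x_i$ is strictly canonical. For each $i$, running an appropriate MMP on a log resolution produces a $\Qq$-factorial crepant terminal model $\pi_i : Y_i \to X_i$ that extracts exactly the (finitely many) prime divisors $E_{i,j}$ over $X_i$ with $a(E_{i,j}, X_i, B_i) = 1$. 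Writing $B_{Y_i} := \pi_{i,*}^{-1} B_i$, the pair $(Y_i, B_{Y_i})$ is terminal, crepant to $(X_i, B_i)$, and $B_{Y_i} \in \Ii$. Setting $D_{Y_i} := \pi_i^* D_i$, crepancy gives
$$t_i \;=\; \inf_{y \in \pi_i^{-1}(x_i)}\, 1\text{-}\lct(Y_i \ni y,\, B_{Y_i};\, D_{Y_i}).$$

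The main obstacle is that $D_{Y_i}$ need not have coefficients in the DCC set $\Ii'$: the multiplicities $\mult_{E_{i,j}} \pi_i^* D_i$ along the extracted exceptional divisors are a priori arbitrary non-negative reals, so Theorem \ref{thm: alct acc terminal threefold} does not apply verbatim. To resolve this, I would argue that each such multiplicity is a non-negative linear combination of the coefficients of $D_i$ weighted by the valuative multiplicities of $E_{i,j}$ along the components of $D_i$, and the latter are controlled by the classification of strictly canonical threefold singularities together with boundedness of divisorial extractions in dimension three. Passing to a subsequence along which the analytic type of $X_i \ni x_i$ and the configuration of the $E_{i,j}$ stabilize, one absorbs the new coefficients into an enlarged but still DCC set $\Ii'' \supset \Ii'$; Theorem \ref{thm: alct acc terminal threefold} then applies to $(Y_i \ni y_i, B_{Y_i}; D_{Y_i})$ with boundary coefficients in $\Ii \cup \Ii''$ and delivers the desired contradiction.
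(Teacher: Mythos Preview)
The paper does not prove Theorem~\ref{thm: 3fold acc ct}; it is quoted from \cite[Theorem~1.7]{HLL22} and \cite[Theorem~1.2]{Che22}, so there is no in-paper argument to compare against and I assess your proposal on its own.

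Your overall strategy---localize, pass to a terminalization of $(X_i,B_i)$, then invoke Theorem~\ref{thm: alct acc terminal threefold}---is sound, but you have misdiagnosed the obstacle in the final paragraph, and the resolution you sketch is both unnecessary and unjustified. The exceptional coefficients of $D_{Y_i}:=\pi_i^*D_i$ are not merely ``controlled''---they \emph{vanish}. After discarding finitely many terms you may take $t_i>0$; since $(X_i,B_i+t_iD_i)$ is still canonical at $t=t_i$, for each extracted divisor $E_{i,j}$ (which by construction has $a(E_{i,j},X_i,B_i)=1$) one gets
\[
1\;\le\; a(E_{i,j},X_i,B_i+t_iD_i)\;=\;1 - t_i\,\mult_{E_{i,j}}D_i,
\]
forcing $\mult_{E_{i,j}}D_i=0$. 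Hence $\pi_i^*D_i$ equals the strict transform of $D_i$ and already lies in $\Ii'$, and Theorem~\ref{thm: alct acc terminal threefold} applies with no enlargement of the coefficient set. By contrast, the route you propose---bounding the valuative multiplicities $\mult_{E_{i,j}}D_{i,k}$ via ``classification of strictly canonical threefold singularities together with boundedness of divisorial extractions''---does not go through as written: strictly canonical threefold germs form an unbounded family (already $A_n\times\mathbb{A}^1$ carries $n$ crepant divisors), and nothing you cite forces the relevant multiplicities into a fixed DCC set.

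A secondary gap: when $1\in\Ii$ the pair $(X_i,B_i)$ need not be klt and can have infinitely many exceptional divisors with log discrepancy exactly~$1$ (for instance $(\mathbb{A}^3,H_1+H_2)$ with two coordinate hyperplanes), so the claim that your $\pi_i$ extracts ``exactly the (finitely many)'' such divisors, and hence the existence of the terminal model you describe, requires a separate justification in that boundary case.
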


\begin{defn}\label{defn: enc}\

\begin{enumerate}
    \item Let $(X,B)$ be a pair. We say that $(X,B)$ is \emph{exceptionally non-canonical} (\emph{enc} for short) if $\mld (X,B)<1$, and the set
$$\{E\mid E\text{ is exceptional over }X, a(E,X,B)\le 1\}$$
contains a unique element. 
\item Let $(X\ni x,B)$ be a germ. We say that $(X\ni x,B)$ is \emph{exceptionally non-canonical} (\emph{enc} for short) if $(X,B)$ is enc in a neighborhood of $x$ and $\mld(X\ni x,B)=\mld(X,B)$.
\end{enumerate}
\end{defn}
It is easy to see that any enc pair is automatically klt.
\begin{lem}
    Let $(X,B)$ be an enc pair. Then $(X,B)$ is klt.
\end{lem}
\begin{proof}
    Since $(X,B)$ is an enc pair, there exists an exceptional divisor over $X$. In particular, $\dim X\ge 2$. Let $f: Y\rightarrow X$ be a log resolution of $(X,\Supp B)$ and write $K_Y+B_Y:=f^*(K_X+B)$. If $(X,B)$ is not klt, then there exists a component $D$ of $\Supp B_Y$ such that $\mult_DB_Y\ge 1$. Let $H_1,H_2$ be two general hyperplane sections on $Y$. For each $i\in\{1,2\}$, let $y_i$ be the generic point of $H_i\cap D$ and let $E_i$ be the exceptional divisor obtained by the blow-up of $Y$ at $y_i$. Then for each $i\in\{1,2\}$, $E_i$ is exceptional over $X$ and $$1\ge 2-\mult_DB_Y\ge a(E_i,Y,B_Y)=a(E_i,X,B),$$
    which contradicts our assumption.
\end{proof}

\begin{thm}[{\cite[Theorem 18.22]{Kol+92}}]\label{thm: number of coefficients local}
Let $(X\ni x,\sum_{i=1}^m b_iB_i)$ be an lc germ such that $B_i\ge 0$ are $\Qq$-Cartier near $x$, $b_i\ge 0$, and $x\in\Supp B_i$ for each $i$. Then $\sum_{i=1}^m b_i\le \dim X$.
\end{thm}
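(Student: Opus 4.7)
The plan is to reduce to the surface case via generic hyperplane sections through $x$, and then establish the inequality in dimension two by analyzing Mumford pullbacks on the minimal resolution.

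\emph{Reduction to dimension two.} Let $n = \dim X$ and assume $n \geq 3$. Pick a general element $h \in \mathfrak{m}_x$, and set $H := (h=0)$. For sufficiently general $h$, the divisor $H$ is Cartier through $x$ and satisfies $H \not\subseteq \Supp B_i$ for every $i$. By a Bertini-type argument applied to the free subsystem of principal divisors in $\mathfrak{m}_x$, the pair $(X \ni x, \sum_i b_i B_i + H)$ remains lc; inversion of adjunction then yields that $(H \ni x, \sum_i b_i (B_i|_H))$ is lc as well. Each $B_i|_H$ is a nonzero effective $\mathbb{Q}$-Cartier divisor on $H$ through $x$, with the same coefficient $b_i$. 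Since $\dim H = n - 1$, the target inequality on $H$ reads $\sum b_i \leq n - 1$, so iterating this $n-2$ times reduces us to proving the surface case $\sum b_i \leq 2$.

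\emph{Surface case.} Writing each $B_i$ as a sum of its prime components, we may assume the $B_i$ are prime. If $x \in S$ is smooth, each $B_i$ is automatically Cartier at $x$ with $\mult_x B_i \geq 1$, and blowing up $x$ produces an exceptional $\mathbb{P}^1$ with $a(E,S,0) = 2$; the lc inequality $\sum_i b_i \, \mult_E B_i \leq a(E,S,0)$ then immediately gives $\sum b_i \leq 2$. If $x$ is a singular point, I would pass to the minimal resolution $\pi : Y \to S$ with exceptional components $E_1, \dots, E_r$, and form the Mumford pullbacks $\pi^* B_i = \widetilde{B_i} + \sum_j \mu_{ij} E_j$, where the $\mu_{ij} \geq 0$ are the unique rationals determined by $\pi^* B_i \cdot E_k = 0$ for all $k$. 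The pulled-back log canonical condition becomes
\[
\sum_i b_i \, \mu_{ij} \;\leq\; a(E_j, S, 0) \quad \text{for every } j.
\]
To extract the bound $\sum b_i \leq 2$ from this system, I would look for nonnegative weights $\lambda_j$ satisfying $\sum_j \lambda_j \mu_{ij} \geq 1$ for every $i$ and $\sum_j \lambda_j \, a(E_j, S, 0) \leq 2$; a uniform weighting already works for du Val singularities of type $A_n$, and in general such weights should exist by the negative-definiteness of the intersection form $(E_j \cdot E_k)$ together with the classification of log canonical surface singularities.

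\emph{Where I expect the trouble.} The most delicate part of the argument is the surface case at non-rational lc singularities (simple elliptic or cusp), whose dual graphs can be arbitrarily complicated and whose individual $\mu_{ij}$ can be small. A cleaner conceptual route that bypasses the dimensional induction would be to produce, by a single weighted blow-up centered at $x$, one divisor $E$ with $\mult_E B_i \geq 1$ for every $i$ and $a(E,X,0) \leq n$; a single application of the lc inequality would then yield $\sum b_i \leq n$ directly. Whether such a universal extractor exists in general is precisely the content that the hyperplane-reduction strategy packages away into the well-understood surface case.
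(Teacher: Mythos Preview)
The paper does not prove this statement; it simply quotes \cite[Theorem 18.22]{Kol+92}. So there is no ``paper's own proof'' to compare against, and the question is whether your argument stands on its own.

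Your reduction step is wrong. You claim that for a general $H$ through $x$, a Bertini-type argument keeps $(X\ni x,\sum_i b_iB_i+H)$ lc. Bertini theorems for log canonical pairs apply to the free part of a linear system; the sublinear system of divisors in $\mathfrak{m}_x$ has $x$ as a base point, and adding any member through $x$ can destroy the lc condition there. Concretely, take $X=\mathbb{A}^n$, $B=\sum_{i=1}^n H_i$ with $H_i$ the coordinate hyperplanes, and $x=0$. Then $(X,B)$ is lc (indeed snc with all coefficients equal to $1$), yet for \emph{any} hyperplane $H$ through $0$ the blow-up of the origin gives $a(E,X,B+H)=n-(n+1)=-1<0$. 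So $(X,B+H)$ is not lc and your inversion-of-adjunction step never fires.

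There is also a structural reason your strategy cannot work: if the hyperplane cut preserved the lc condition, iterating down to a surface would prove $\sum b_i\le 2$ in every dimension. But the bound $\sum b_i\le n$ is sharp (the example above attains equality), so any argument that is insensitive to $n$ must be flawed. Your final paragraph, where you look for a single divisor $E$ over $x$ with $a(E,X,0)\le n$ and $\operatorname{mult}_E B_i\ge 1$ for every $i$, is much closer to the actual mechanism; the subtlety there is exactly what you flag, namely controlling $\operatorname{mult}_E B_i$ when $B_i$ is only $\mathbb{Q}$-Cartier. I would recommend consulting the cited reference for the full argument.
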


\begin{defn}
Let $S$ be a set. We define $\#S$ or $|S|$ to be the cardinality of $S$.
\end{defn}

\subsection{Complements}\label{section: complements}
\begin{defn}\label{defn: complement}
Let $n$ be a positive integer, $\Ii_0\subset (0,1]$ a finite set, and $(X/Z\ni z,B)$ and $(X/Z\ni z,B^+)$ two pairs. We say that $(X/Z\ni z,B^+)$ is an \emph{$\Rr$-complement} of $(X/Z\ni z,B)$ if 
\begin{itemize}
    \item $(X/Z\ni z,B^+)$ is lc,
    \item $B^+\ge B$, and
    \item $K_X+B^+\sim_{\Rr}0$ over a neighborhood of $z$.
\end{itemize}
We say that $(X/Z\ni z,B^+)$ is an \emph{$n$-complement} of $(X/Z\ni z,B)$ if
\begin{itemize}
\item $(X/Z\ni z,B^+)$ is lc,
\item $nB^+\ge \lfloor (n+1)\{B\}\rfloor+n\lfloor B\rfloor$, and
\item $n(K_X+B^+)\sim 0$ over a neighborhood of $z$.
\end{itemize}
We say that $(X/Z\ni z,B)$ is $\Rr$-complementary (resp. $n$-complementary) if $(X/Z\ni z,B)$ has an $\Rr$-complement (resp. $n$-complement). 

 We say that $(X/Z\ni z,B^+)$ is a \emph{monotonic $n$-complement} of $(X/Z\ni z,B)$ if $(X/Z\ni z,B^+)$ is an $n$-complement of $(X/Z\ni z,B)$ and $B^+\ge B$.
 
 We say that $(X/Z\ni z,B^+)$ is an \emph{$(n,\Ii_0)$-decomposable $\Rr$-complement} of $(X/Z\ni z,B)$ if there exists a positive integer $k$, $a_1,\dots,a_k\in\Ii_0$, and $\Qq$-divisors $B_1^+,\dots,B_k^+$ on $X$, such that
\begin{itemize}
\item $\sum_{i=1}^ka_i=1$ and  $\sum_{i=1}^ka_iB_i^+=B^+$,
\item $(X/Z\ni z,B^+)$ is an $\Rr$-complement of $(X/Z\ni z,B)$, and
\item  $(X/Z\ni z,B_i^+)$ is an $n$-complement of itself for each $i$.
\end{itemize}
\end{defn}

\begin{thm}[{\cite[Theorem 1.10]{HLS24}}]\label{thm: ni decomposable complement}
Let $d$ be a positive integer and $\Ii\subset [0,1]$ a DCC set. Then there exists a positive integer $n$ and a finite set $\Ii_0\subset (0,1]$ depending only on $d$ and $\Ii$ and satisfying the following. 

Assume that $(X/Z\ni z,B)$ is a pair of dimension $d$ and $\coeff(B)\subseteq\Ii$, such that $X$ is of Fano type over $Z$ and $(X/Z\ni z,B)$ is $\Rr$-complementary. Then $(X/Z\ni z,B)$ has an $(n,\Ii_0)$-decomposable $\Rr$-complement. 
\end{thm}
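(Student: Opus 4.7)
The plan is to derive this from Birkar's theorem on the boundedness of $n$-complements (in the finite rational coefficient case) by means of a uniform rational polytope argument: approximate the DCC boundary $B$ by finitely many $\Qq$-boundaries $B_1, \dots, B_k$ whose coefficients lie in a finite rational set determined only by $(d, \Ii)$, each of which remains $\Rr$-complementary, then take $n$-complements $B_i^+$ for each $B_i$ and reassemble them via a convex combination to produce $B^+ := \sum a_i B_i^+$.

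More concretely, I would proceed in three steps. First, as a black box I invoke Birkar's theorem on complements: for each $d \in \Zz_{>0}$ and finite set $\Gamma_0 \subset \Qq \cap [0,1]$, there exists $n_0 = n_0(d, \Gamma_0) \in \Zz_{>0}$ such that every $\Rr$-complementary pair $(X/Z \ni z, B')$ with $\dim X = d$, $X$ of Fano type over $Z$, and $B' \in \Gamma_0$, admits an $n_0$-complement. Second, I establish a uniform rational polytope statement: for the given $(d, \Ii)$, there exist a finite set $\Gamma_0 \subset \Qq \cap [0,1]$ and a finite set $\Ii_0 \subset (0,1]$ such that for every admissible pair $(X/Z \ni z, B)$ in the theorem, one can write
$$B = \sum_{i=1}^{k} a_i B_i, \qquad a_i \in \Ii_0, \quad \sum_{i=1}^{k} a_i = 1,$$
with each $B_i$ a $\Qq$-divisor whose support is contained in $\Supp B$ and whose coefficients lie in $\Gamma_0$, and with each $(X/Z \ni z, B_i)$ again $\Rr$-complementary. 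Third, I apply Birkar's theorem to each $B_i$ to obtain $n_0$-complements $(X/Z \ni z, B_i^+)$, and set $B^+ := \sum_{i=1}^k a_i B_i^+$. Being lc is preserved under convex combinations of boundaries sharing a common lc structure, $B^+ \geq \sum a_i B_i = B$, and $K_X + B^+ = \sum a_i (K_X + B_i^+) \sim_{\Rr} 0$ over a neighborhood of $z$; so $B^+$ is the desired $(n_0, \Ii_0)$-decomposable $\Rr$-complement.

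The main obstacle is the uniform rational polytope statement in the second step. The idea is to encode the coefficients of $B$ as a vector $\bb \in [0,1]^N$ and to exhibit, independently of $(X, Z, z)$, a rational polytope $P \subset \Rr^N$ containing the DCC region $\Ii^N \cap \{\sum b_j \leq d\}$ (the sum bound supplied by Theorem \ref{thm: number of coefficients local}), together with finitely many rational vertices $\mathbf{v}_1, \dots, \mathbf{v}_M$, such that every $\bb$ arising from an admissible pair is a convex combination of these vertices and such that $\Rr$-complementarity is preserved when $\bb$ is replaced by any of the vertices expressing it. The rationality and uniformly bounded denominators of the defining inequalities ultimately rest on the ACC for log canonical thresholds; the uniform combinatorial structure comes from the local bound on the number of boundary components passing through any point in Theorem \ref{thm: number of coefficients local}, combined with the DCC condition, so that the possible coefficient vectors lie in a bounded DCC set with compact closure. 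Once the polytope is constructed with vertices in a uniformly finite rational set, a Carath\'eodory-type decomposition furnishes the finite set $\Ii_0$ and completes the argument.
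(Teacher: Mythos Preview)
The paper does not contain a proof of this theorem; it is quoted verbatim from \cite[Theorem~1.10]{HLS19} and used as a black box. So there is nothing in the present paper to compare your proposal against.

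That said, your outline is essentially the strategy of \cite{HLS19}: reduce from DCC coefficients to a finite rational set via a uniform rational polytope decomposition, then apply Birkar's boundedness of $n$-complements to each rational piece and take the convex combination. A couple of points where your sketch is looser than what is actually needed: the preservation of $\Rr$-complementarity when passing from $B$ to the vertices $B_i$ is not automatic from convexity alone and in \cite{HLS19} requires the uniform perturbation results (their Theorem~5.6, which you can see invoked elsewhere in this paper in the proof of Theorem~\ref{thm: enc ACC fin coeff}); also, one generally needs $B_i^+ \geq B_i$ (monotonic complements), not just $n$-complements, to guarantee $B^+ \geq B$ after averaging, so the version of Birkar's theorem you invoke should be the one producing monotonic $n$-complements for hyperstandard coefficients.
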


\subsection{Threefold singularities}

\begin{lem}[{\cite[Lemma~5.1]{Kaw88}}]\label{lem: local index is local cartier index}
Let $X\ni x$ be a terminal threefold singularity and $I$ a positive integer such that $IK_X$ is Cartier near $x$. Then $ID$ is Cartier near $x$ for any $\Qq$-Cartier Weil divisor $D$ on $X$.
\end{lem}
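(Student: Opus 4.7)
My plan is to reduce the statement to the Gorenstein case via the canonical index-$1$ cover of $X$. Replacing $I$ by the Cartier index of $K_{X}$ if necessary, I may assume $I$ is exactly this index. I would first form
\[
\pi : (\tilde X, \tilde x) \longrightarrow (X, x), \qquad \tilde X := \Spec_{X}\bigoplus_{i=0}^{I-1}\mathcal{O}_{X}(-iK_{X}),
\]
the cyclic degree-$I$ cover trivializing the torsion sheaf $\mathcal{O}_{X}(K_{X})$. Standard cyclic-cover theory then yields that $\pi$ is étale in codimension one, that $K_{\tilde X}=\pi^{*}K_{X}$ is Cartier near $\tilde x$, and (because discrepancies pull back unchanged under morphisms étale in codimension one) that $(\tilde X,\tilde x)$ is again terminal. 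Thus $\tilde x\in\tilde X$ is an isolated Gorenstein terminal threefold singularity, i.e., an isolated compound Du Val (cDV) singularity.

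Next I would pull back $D$. Because $\pi$ is étale in codimension one, $\tilde D:=\pi^{*}D$ is a well-defined Weil divisor on $\tilde X$, and it remains $\Qq$-Cartier, since $m\tilde D=\pi^{*}(mD)$ is Cartier for any $m$ with $mD$ Cartier. The key input is the local structural fact that every $\Qq$-Cartier Weil divisor on an isolated cDV singularity is automatically Cartier, equivalently, that the local divisor class group of such a singularity is torsion-free. Granting this, $\tilde D$ is Cartier near $\tilde x$.

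For the descent step, set $G:=\operatorname{Gal}(\tilde X/X)\cong \ZZ/I\ZZ$. The line bundle $\mathcal{O}_{\tilde X}(I\tilde D)\cong\pi^{*}\mathcal{O}_{X}(ID)$ carries the tautological pullback $G$-equivariant structure, so its pushforward decomposes into $G$-isotypic components. Combined with the identification $\pi_{*}\mathcal{O}_{\tilde X}\cong\bigoplus_{j=0}^{I-1}\mathcal{O}_{X}(-jK_{X})$ (extended reflexively from the smooth locus), this yields
\[
\pi_{*}\mathcal{O}_{\tilde X}(I\tilde D)\cong\bigoplus_{j=0}^{I-1}\mathcal{O}_{X}(ID-jK_{X}),
\]
whose trivial-character summand is $\mathcal{O}_{X}(ID)$. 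Since $\pi$ is finite and flat (by miracle flatness, as $\tilde X$ and $X$ are both Cohen-Macaulay of the same dimension) and $I\tilde D$ is Cartier, the left-hand side is locally free of rank $I$; its direct summand $\mathcal{O}_{X}(ID)$ is therefore locally free as well, so $ID$ is Cartier near $x$.

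The main obstacle I expect is the class-group statement for isolated cDV singularities used in the second step: this is the sole nontrivial ingredient, and does not follow formally from Gorenstein terminality. I would handle it by taking a small projective $\Qq$-factorial modification $f:Y\to\tilde X$, which exists for isolated terminal Gorenstein threefolds and is automatically small by a discrepancy count (any exceptional divisor would have strictly positive discrepancy, contradicting crepancy of $f$). Then $f_{*}$ identifies the local divisor class groups of $\tilde X$ and $Y$, and a topological analysis of the exceptional curves of $f$ rules out torsion, giving the desired statement.
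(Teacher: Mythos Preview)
The paper itself does not prove this lemma; it simply cites \cite[Lemma~5.1]{Kaw88}. Your overall strategy---pass to the index-one cover, use that isolated cDV singularities have torsion-free local class group, then descend---is exactly Kawamata's approach, so in spirit you are on the right track.

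There is, however, a genuine error in your descent step. You claim $\pi$ is flat by miracle flatness ``as $\tilde X$ and $X$ are both Cohen--Macaulay of the same dimension'', but miracle flatness requires the \emph{base} to be regular, and $X$ is singular at $x$. In fact $\pi$ is visibly non-flat whenever $I>1$: by construction $\pi_*\mathcal{O}_{\tilde X}=\bigoplus_{j=0}^{I-1}\mathcal{O}_X(-jK_X)$, and the summands $\mathcal{O}_X(-jK_X)$ for $0<j<I$ are not locally free at $x$ since $K_X$ has Cartier index exactly $I$ there. So your direct-summand argument for local freeness of $\mathcal{O}_X(ID)$ collapses.

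The fix is to replace the pushforward argument by a norm. Once you know $\tilde D=\pi^*D$ is Cartier, choose a local equation $g$ for $\tilde D$ near $\tilde x$ and set $h:=\prod_{\sigma\in G}\sigma^*g$. Since $\tilde D$ is $G$-invariant, each $\sigma^*g$ differs from $g$ by a unit, so $h$ is $G$-invariant and hence lies in $K(X)$. On $\tilde X$ one has $\operatorname{div}_{\tilde X}(h)=I\tilde D=\pi^*(ID)$; since $\pi$ is \'etale in codimension one, $\pi^*\operatorname{div}_X(h)=\operatorname{div}_{\tilde X}(h)$, whence $\operatorname{div}_X(h)=ID$ and $ID$ is Cartier. (Equivalently, one may invoke Kempf's descent lemma: the $G$-linearized line bundle $\mathcal{O}_{\tilde X}(\tilde D)^{\otimes I}$ has trivial stabilizer action at $\tilde x$ because $|G|=I$, so it descends to a line bundle on $X$, necessarily $\mathcal{O}_X(ID)$.) Your sketch for the torsion-freeness of $\mathrm{Cl}(\tilde X,\tilde x)$ via a $\Qq$-factorialization is vague; the clean route is Milnor's theorem that the link of an isolated hypersurface singularity in $\mathbb{C}^4$ is simply connected, which kills any cyclic \'etale cover coming from a torsion divisor class.
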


\begin{thm}[{\cite[Theorem 1.4]{LX21}, \cite[Theorem 1.3]{Jia21}}]\label{thm: 12/13} Let $X$ be a $\Qq$-Gorenstein threefold. If $\mld(X)<1$, then $\mld(X)\le\frac{12}{13}$.\end{thm}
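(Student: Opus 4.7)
The plan is to argue by contradiction: suppose $X\ni x$ is a $\Qq$-Gorenstein threefold germ with $a:=\mld(X\ni x)\in(\tfrac{12}{13},1)$. Since $a>0$ the germ is klt, so by \cite{BCHM10} one may extract a prime exceptional divisor $E$ computing the mld: there is a $\Qq$-factorial projective birational morphism $f:Y\to X$ with $\Exc(f)=E$, $-E$ $f$-ample, and $K_Y+(1-a)E=f^*K_X$. After shrinking $X$ one may take $E$ to be the unique exceptional divisor attaining the mld, so the pair $(Y,(1-a)E)$ is klt in the strongest possible sense along $E$.

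Next I would apply Shokurov--Koll\'ar adjunction along $E$. From $(K_Y+E)|_E=K_E+\Diff_E(0)$, where $\Diff_E(0)$ has standard coefficients $\tfrac{m-1}{m}$ with $m\in\mathbb{Z}_{\ge 1}$, combined with the crepant identity one obtains
\[
K_E+\Diff_E(0)\equiv a\,E|_E.
\]
The negativity lemma makes $-E|_E$ ample on $E$, so $(E,\Diff_E(0))$ is a log del Pezzo surface pair with ample anti-log-canonical divisor $a(-E|_E)$. Since $f$ is an extremal extraction, $E$ has relative Picard number one over $x$, making this surface extremely rigid.

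The final step is to derive a numerical contradiction. On the one hand, passing to the index-$1$ cover $\pi:\tilde X\to X$ shows that the possible local analytic types of $X\ni x$ are controlled by the Mori--Reid classification of cDV singularities \cite{Mor85,Rei87}, together with a cyclic group action; moreover, log discrepancies on the Gorenstein cover $\tilde X$ are integers, which already forces $a$ to lie among a very thin set of rational numbers with controlled denominator. On the other hand, the relation $K_E+\Diff_E(0)\equiv a\,E|_E$ on the log del Pezzo $(E,\Diff_E(0))$ links the value $a$ to the intersection-theoretic data of the extraction. Following the case analysis of \cite{Jia21,LX21}, one examines each admissible pair $(X\ni x, f)$ and verifies by direct computation that $1-a\ge \tfrac{1}{13}$, with equality attained on a specific cyclic quotient of index $13$; this contradicts $a>\tfrac{12}{13}$.

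The main obstacle is this last step: enumerating all extractions of a prime divisor of discrepancy lying in $(0,\tfrac{1}{13})$ from a three-dimensional $\Qq$-Gorenstein klt germ is lengthy, and for each configuration one must explicitly compute $a$ in terms of the local invariants (the cyclic action, the Cartier indices appearing in $\Diff_E(0)$, and the self-intersection $E|_E$). This case-by-case verification, which relies on a delicate combination of the cDV classification and Reid--Tai-type inequalities, is where essentially all of the technical work of \cite{Jia21,LX21} is spent.
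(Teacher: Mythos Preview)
The paper does not prove this theorem; it is quoted as a black box from \cite{LX21} and \cite{Jia21}, so there is no ``paper's own proof'' to compare against. Your outline is in the right spirit---the cited proofs do ultimately reduce to a case analysis on cDV cyclic quotients via Reid--Tai/weight computations---but two points deserve correction.

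First, the sentence ``after shrinking $X$ one may take $E$ to be the unique exceptional divisor attaining the mld'' is not justified: shrinking the base does nothing to enforce uniqueness of the divisor computing $\mld$. In the actual argument this is handled by the reduction to \emph{exceptionally non-canonical} (enc) singularities---one replaces $(X\ni x)$ by a model on which only one exceptional divisor has log discrepancy $\le 1$ (compare Theorem~\ref{thm: ecn 1 gap implies klt 1 gap} of the present paper, or \cite[Definition 2.1]{Jia21}). This reduction is the entry point to the whole classification, and without it the extraction $f:Y\to X$ need not have $Y$ close to terminal, so the subsequent structure you invoke is not available.

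Second, the adjunction paragraph is a detour that you never cash in: you write down $K_E+\Diff_E(0)\equiv a\,E|_E$ and observe $(E,\Diff_E(0))$ is log del Pezzo, but then immediately pivot to the index-$1$ cover and the Mori--Reid classification, which is an entirely different mechanism. In \cite{Jia21,LX21} the work is done on the ambient hyperquotient $(f=0)\subset\mathbb C^4/\frac{1}{r}(a_1,\dots,a_4)$ via monomial weight vectors $\alpha\in N$ and the inequalities $\alpha(x_1x_2x_3x_4)-\alpha(f)>1$ forced by the enc condition (cf.\ Theorem~\ref{thm: jia21 2.4 enc case} and Setting~\ref{Setting: before terminal lem} here), not via surface adjunction on $E$. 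Your final paragraph correctly identifies that the heavy lifting is a long case check, but the route into that case check is through enc reduction and weight combinatorics, not through the log del Pezzo $(E,\Diff_E(0))$.
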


\begin{defn}\label{defn: weights of monomials and poly} A \emph{weight vector} is a vector $w\in\mathbb Q_{>0}^d$ for some positive integer $d$.  

For any vector $\bm{\alpha}=(\alpha_1,\dots, \alpha_d)\in \Zz_{\ge 0}^d$, we define $\bm{x}^{\bm{\alpha}} :=x_1^{\alpha_1}\cdots x_d^{\alpha_d}$, and $w(\bm{x}^{\bm{\alpha}}):=\sum_{i=1}^dw_i\alpha_i$ to be \emph{the weight of $\bm{x}^{\bm{\alpha}}$ with respect to $w$}.
For any analytic function $0\neq h:=\sum_{\bm{\alpha}\in \Zz^d_{\ge 0}} a_{\bm{\alpha}}\bm{x}^{\bm{\alpha}}$, we define $w(h):=\min\{w(\bm{x}^{\bm{\alpha}})\mid a_{\bm{\alpha}}\neq 0\}$ to be \emph{the weight of $h$ with respect to $w$}. If $h=0$, then we define $w(h):=+\infty$.
\end{defn}

\begin{defn}\label{defn: weighted blowup log discrepancies}
Let $(X\ni x,B:=\sum_{i=1}^k b_iB_i)$ be a threefold germ such that $X$ is terminal, $b_i\ge 0$, and $B_i\ge 0$ are $\Qq$-Cartier Weil divisors. Let $d,n$, and $m<d$ be positive integers, such that
$$(X\ni x)\cong (\phi_1=\cdots =\phi_m=0)\subset (\mathbb C^d\ni o)/\frac{1}{n}(a_1,\dots,a_d)$$ for some non-negative integers $a_1,\dots,a_d$ and some semi-invariant irreducible analytic function $\phi_1\dots,\phi_m\in \mathbb{C}\{x_1,\dots,x_d\}$ such that $\mult_o \phi_i>1$ for each $i$, and the group action on $\Cc^d$ corresponding to $\frac{1}{n}(a_1,\dots,a_d)$ is free outside $o$. By \cite[Lemma~5.1]{Kaw88}, $B_i$ can be identified with $\big((h_i=0)\subset \mathbb (\mathbb{C}^d\ni o)/\frac{1}{n}(a_1,\dots,a_d)\big)|_X$ for some semi-invariant analytic function $h_i\in \mathbb{C}\{x_1,\dots,x_d\}$ near $x\in X$. We say that $B_i$ is locally defined by $(h_i=0)$ for simplicity. The set of \emph{admissible weight vectors} of $X\ni x$ is defined by $$\left\{\frac{1}{n}(w_1,\dots,w_d)\in\frac{1}{n}\mathbb Z^d_{>0}\Bigm| \text{there exists $b\in\mathbb{Z}$ such that $w_i\equiv b a_i~\mathrm{mod}~n$, $1\le i\le d$}\right\}.$$ For any admissible weight vector $w=\frac{1}{n}(w_1,\dots,w_d)$, 
we define 
$$w(X\ni x):=\frac{1}{n}\sum_{i=1}^d w_i-\sum_{i=1}^m w(\phi_i)-1, \text{ and } w(B):=\sum_{i=1}^k b_iw(h_i).$$
By construction, $w(B)$ is independent of the choices of $b_i$ and $B_i$.

Let $f': W\to (\Cc^d\ni o)/\frac{1}{n}(a_1,\dots,a_d)$ be the weighted blow-up at $o$ with an admissible weight vector $w:=\frac{1}{n}(w_1,\dots,w_d)$, $Y$ the strict transform of $X$ on $W$, and $E'$ the exceptional locus of $f'$. Let $f:=f'|_{Y}$ and $E:=E'|_{Y}$. We say that $f: Y\to X\ni x$ is a \emph{weighted blow-up} at $x\in X$ and $E$ is the exceptional divisor of this weighted blow-up.
\end{defn}

We will use the following well-known lemma frequently.
\begin{lem}[cf. {\cite[the proof of Theorem 2]{Mor85}, \cite[\S 3.9]{Hay99}}]\label{lem: weighted blowup log discrepancies}
Settings as in Definition~\ref{defn: weighted blowup log discrepancies}. For any admissible weight vector $w$ of $X\ni x$, let $E$ be the exceptional divisor of the corresponding weighted blow-up $f: Y\rightarrow X$ at $x$ (cf. Definition~\ref{defn: weighted blowup log discrepancies}). If $E$ is a prime divisor, then
$$K_Y=f^*K_X+w(X\ni x)E, \text{ and }~f^*B=B_Y+w(B)E,$$
where $B_Y$ is the strict transform of $B$ on $Y$. In particular,  $a(E,X,B)=1+w(X\ni x)-w(B)$.
\end{lem}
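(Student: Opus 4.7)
The plan is to reduce everything to an ambient-space computation on the weighted blow-up of the cyclic quotient $U := (\Cc^d\ni o)/\frac{1}{n}(a_1,\dots,a_d)$, and then descend to $Y$ and to each $B_i$ by adjunction. Let $f': W\to U$ be the weighted blow-up with admissible weight $w=\frac{1}{n}(w_1,\dots,w_d)$ and exceptional divisor $E'$. I would first establish two ambient formulas: the discrepancy identity $K_W = f'^*K_U + \bigl(\frac{1}{n}\sum_{i=1}^d w_i - 1\bigr)E'$, and, for any semi-invariant analytic function $h$, the pullback identity $f'^*(h=0) = (\text{strict transform}) + w(h)\,E'$. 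Both are classical: on the smooth cover $\Cc^d$ the integer-weighted blow-up has exceptional discrepancy $\sum w_i - 1$, and the multiplicity of $(h=0)$ along the exceptional divisor equals $w(h)$ by the very definition of weighted order; the admissibility condition $w_i\equiv b a_i\pmod n$ is exactly what makes the blow-up descend to $U$ and accounts for the factor $1/n$.

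With these in hand, adjunction does the rest. Since $X$ is locally the complete intersection cut out by $\phi_1,\dots,\phi_m$, one has $K_Y = (K_W + \sum_{i=1}^m \phi_i^{\mathrm{strict}})|_Y$, where each ambient strict transform equals $f'^*(\phi_i=0) - w(\phi_i)E'$. Restricting to $Y$ and using that $E$ is a prime divisor, so $E'|_Y = E$ appears with multiplicity one and coefficients on the two sides may be compared unambiguously, gives
$$K_Y = f^*K_X + \Bigl(\frac{1}{n}\sum_{i=1}^d w_i - \sum_{i=1}^m w(\phi_i) - 1\Bigr)E = f^*K_X + w(X\ni x)\,E.$$
Applying the pullback identity to each local defining function $h_i$ of $B_i$ and restricting to $Y$ yields $f^*B_i = B_{i,Y} + w(h_i)E$; taking the $b_i$-linear combination produces $f^*B = B_Y + w(B)E$. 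The final log discrepancy formula is then immediate: writing $K_Y + B_Y' = f^*(K_X+B)$ gives $\mult_E B_Y' = w(B) - w(X\ni x)$, whence $a(E,X,B) = 1 + w(X\ni x) - w(B)$.

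The main obstacle is the ambient discrepancy formula in the cyclic quotient setting. This requires a careful chart-by-chart description of $f'$ in the presence of the $\Zz/n$-action: one must verify that on each affine toric chart of the weighted blow-up the group action descends, that the resulting exceptional divisor is irreducible with the claimed discrepancy, and that ``weighted order of $h$'' agrees with ``order of vanishing of $h$ along $E'$'' after descent. This is standard toric/orbifold geometry and is carried out in the references cited in the statement (\cite[Theorem 2]{Mor85} and \cite[\S 3.9]{Hay99}), so the strategy is simply to invoke those sources rather than redo the toric computation. Once the ambient formulas are accepted, the rest of the proof is a formal adjunction argument, and the hypothesis that $E$ is a prime divisor on $Y$ enters exactly to make the coefficient comparison after adjunction well-defined.
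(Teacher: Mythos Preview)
Your proposal is correct and follows the standard adjunction-to-ambient-space argument; the paper itself does not supply a proof of this lemma but simply records it with references to \cite[proof of Theorem~2]{Mor85} and \cite[\S 3.9]{Hay99}, which carry out precisely the toric/orbifold computation you describe.
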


\section{On termination of flips}

\subsection{Proofs of Theorems \ref{thm: enc and tof} and \ref{thm: enc and tof dim 4}} We need the following auxiliary lemma for induction purposes.
\begin{lem}\label{lem: high dimension imply low dimension}
Let $d\ge 2$ be a positive integer. We have the following.
\begin{enumerate}
    \item The ACC for (global) mlds of enc pairs with finite coefficients in dimension $d$ implies the ACC for (global) mlds of enc pairs with finite coefficients in dimension $\le d$.
    \item The termination of $\mathbb Q$-factorial terminal (resp. klt, lc) flips in dimension $d$ implies the termination of $\mathbb Q$-factorial terminal (resp. klt, lc) flips in dimension $\le d$.
\end{enumerate}
\end{lem}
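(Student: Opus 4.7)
The plan for both parts is to argue by contrapositive, promoting a sequence in dimension $d'<d$ to one in dimension $d$ via the product construction with $Y:=\mathbb{A}^{d-d'}$; the case $d'=d$ is immediate from the hypothesis. The key technical input is the following lemma (proved by iterating over the $\mathbb{A}^1$-factors, noting that klt is preserved under each step): for any klt pair $(X,B)$, every exceptional prime divisor $F$ over $X\times Y$ with $a(F,X\times Y,B\times Y)\leq 1$ has center dominating $Y$, and such $F$ are in natural bijection with exceptional prime divisors $E$ over $X$ satisfying $a(E,X,B)\leq 1$, with equal log discrepancy. The bijection is given by restriction to the generic fiber (and Gauss extension in the reverse direction), and the equality of log discrepancies is immediate from crossing a log resolution of $(X,B)$ with $Y$. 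The substantive content is the vertical case: for $Y=\mathbb{A}^1$ and any exceptional $F$ whose center is contained in a single fiber $X\times\{y\}$, apply inversion of adjunction to $(X\times\mathbb{A}^1,B\times\mathbb{A}^1+X\times\{y\})$---which is plt along $X\times\{y\}$ since $(X,B)$ is klt---to obtain $a(F,X\times\mathbb{A}^1,B\times\mathbb{A}^1)>\mult_F(X\times\{y\})\geq 1$.

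For (2), suppose we have an infinite sequence of $\mathbb{Q}$-factorial terminal (resp.\ klt, lc) flips $(X_i,B_i)/Z_i\dashrightarrow(X_i^+,B_i^+)/Z_i$ in dimension $d'<d$. Take the product of each datum with $Y$. The induced morphisms $X_i\times Y\to Z_i\times Y$ remain small extremal contractions with $K+B\times Y=\mathrm{pr}_1^*(K+B)$ (anti)ample over the base, yielding a sequence of flips in dimension $d$. The lemma ensures that the terminal/klt/lc property is preserved under the product (horizontal log discrepancies unchanged, vertical ones strictly greater than $1$), and $\mathbb{Q}$-factoriality is preserved because $\operatorname{Cl}(X_i\times Y)\cong\operatorname{Cl}(X_i)$ via pullback when $Y$ is an affine space. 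This contradicts termination of flips in dimension $d$.

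For (1), given an enc pair $(X,B)$ in dimension $d'<d$ with $B\in\Ii$ finite, set $(X',B'):=(X\times Y,B\times Y)$. By the lemma, the exceptional divisors over $X'$ with log discrepancy $\leq 1$ biject (with equal log discrepancy) with those over $X$. Since $(X,B)$ is enc, there is a unique such divisor over $X$, hence a unique such divisor over $X'$, so $(X',B')$ is enc with $\mld(X',B')=\mld(X,B)$ and $B'\in\Ii$. A strictly increasing sequence of enc mlds in dimension $d'$ with coefficients in $\Ii$ thus produces one in dimension $d$ with the same finite coefficient set, contradicting the ACC hypothesis in dimension $d$.

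The main obstacle is the lemma about exceptional divisors over $X\times\mathbb{A}^1$. The inversion-of-adjunction computation itself is short, but the subtlety lies in checking that every exceptional prime divisor $F$ over $X\times\mathbb{A}^1$ whose associated valuation $v_F$ does not correspond to the Gauss extension of a valuation on $k(X)$ is in fact centered in a single closed fiber $X\times\{y\}$ (including the case $v_F(t)=0$ where $t$ reduces to an algebraic element in the residue field), so that the adjunction argument applies after translating to $y=0$; and that the bijection via generic fiber is well-defined at the level of divisorial valuations. These are classical facts about valuations on function fields of products, but must be verified explicitly to make the reduction rigorous.
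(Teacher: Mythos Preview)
Your approach for part~(2), taking the product with $\mathbb{A}^{d-d'}$ rather than with an elliptic curve as the paper does, is valid. In one respect it is cleaner: the identity $\operatorname{Cl}(X\times\mathbb{A}^n)\cong\operatorname{Cl}(X)$ makes the preservation of $\mathbb{Q}$-factoriality immediate, whereas with an elliptic curve one must still argue this locally. Both choices give $K_Y\sim 0$, so the flip structure transfers identically.

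For part~(1), however, the dichotomy you rely on in your final paragraph is false, and this is a genuine gap. Take $X=\mathbb{A}^2_{u,v}$, $B=0$, and blow up the smooth curve $\{u=0,\ v=t\}\subset \mathbb{A}^2\times\mathbb{A}^1_t$. The exceptional divisor $F$ has center this curve, which dominates $\mathbb{A}^1$, so $F$ is horizontal; yet $v_F(v-t)=1$ while the Gauss extension of $v_F|_{k(X)}$ (order of vanishing along $\{u=0\}$) gives $v-t$ valuation $\min(0,0)=0$. Thus $v_F$ is a horizontal divisorial valuation that is \emph{not} a Gauss extension, contradicting your assertion that every non-Gauss exceptional valuation is centered in a single fiber. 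In this example $a(F,\mathbb{A}^3,0)=2>1$, so the lemma itself survives, but your argument supplies no mechanism for handling such horizontal non-Gauss $F$ in general.

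The paper bypasses this issue entirely. It works directly on the product log resolution $Y'=Y\times\mathbb{C}$ with $\operatorname{Supp}B_Y^{>0}$ log smooth, and checks via the snc formula $\operatorname{mld}(Y'\ni y',B_{Y'})=\operatorname{codim}y'-\operatorname{mult}_{y'}B_{Y'}$ that every exceptional center $y'\neq E'$ has mld~$>1$. This covers all exceptional divisors over $X'$ at once, with no horizontal/vertical split. Your inversion-of-adjunction treatment of the vertical case is correct and elegant, but completing the horizontal case ultimately requires the same log-resolution computation the paper performs, so the detour through Gauss extensions buys nothing.
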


\begin{proof}
(1) It suffices to show that for any enc pair $(X,B)$, $(X',B'):=(X\times\Cc,B\times\Cc)$ is also an enc pair, and $\mld(X',B')=\mld(X,B)$. Let $E$ be the unique exceptional prime divisor over $X$, such that $a(E,X,B)=\mld (X,B)<1$. By \cite[Proposition 2.36]{KM98}, there exists a log resolution $f:Y\to X$ of $(X,B)$, such that $\Supp B_Y^{> 0}$ is log smooth, where $K_Y+B_Y:=f^{*}(K_X+B)$. Then $\mult_{E}B_Y=1-a(E,X,B)>0$. Let $Y':=Y\times \Cc$, $B_{Y'}:=B_Y\times \Cc$, $E'=E\times \Cc$, and $f':=f\times \id_{\mathbb C}$. Then $f': Y'\to X'$ is a log resolution of $(X',B')$, 
$$K_{Y'}+B_{Y'}={f'}^{*}(K_{X'}+B'),$$
$\Supp B_{Y'}^{>0}$ is log smooth, and $\mult_{E'}B_{Y'}=\mult_{E}B_Y>0$. 

Since $(X,B)$ is enc, for any point $y'$ on $Y'$ such that $\overline{y'}\neq E'$ is exceptional over $X'$, we have $\codim y'>\mult_{y'}B_{Y'}+1$. Thus by \cite[Lemma 3.3]{CH21}, $$\mld(Y'\ni y',B_{Y'})=\codim y'-\mult_{y'}B_{Y'}>1.$$ In other words, $E'$ is the unique exceptional prime divisor over $X'$ such that $a(E',X',B')\le 1$. Hence $(X',B')$ is an enc pair, and $\mld(X',B')=a(E',X',B')=a(E,X,B)=\mld(X,B)$. 

(2) Let $(X,B)$ be a $\mathbb Q$-factorial terminal (resp. klt, lc) pair, and $$(X,B):=(X_0,B_0)\dashrightarrow (X_1,B_1)\dashrightarrow\cdots (X_i,B_i)\dashrightarrow\cdots$$ a sequence of flips. Let $C$ be an elliptic curve. Then
$$(X\times C,B\times C):=(X_0\times C,B_0\times C)\dashrightarrow (X_1\times C,B_1\times C)\dashrightarrow\cdots (X_i\times C,B_i\times C)\dashrightarrow\cdots$$ 
is also a sequence of flips of dimension $\dim X+1$. Now (2) follows from our assumptions.
\end{proof}

We will use the following notation in the proofs of Lemma \ref{lem: reduce to enc} and Theorem \ref{thm: enc and tof}.

\begin{defn}
Let $(X,B)$ be an lc pair. We define
$$D(X,B)_{\le 1}:=\{E\mid E\text{ is exceptional over }X, a(E,X,B)\le 1\}.$$
By \cite[Proposition 2.36]{KM98}, $D(X,B)$ is a finite set when $(X,B)$ is klt.
\end{defn}

The following lemma plays a key role in this section, and it will be applied to prove Theorems \ref{thm: enc and tof}, \ref{thm: reduction to enc acc mld}, and Lemma \ref{lem: e to n}.

\begin{lem}\label{lem: reduce to enc}
Let $d,N$ be two positive integers, and $\Ii\subset [0,1]$ a DCC set. Let $\{(X_i,B_i)\}_{i=1}^{\infty}$ be a sequence of klt pairs of dimension $d$, and $$\Ii_i:=\{a(E_i,X_i,B_i)\mid E_i\text{ is exceptional over }X_i\}\cap [0,1].$$ Suppose that
\begin{itemize}
    \item $\coeff(B_i)\subseteq\Ii$ for each $i$,
    \item $1\le \#\Ii_i\le N$ for each $i$, and
    \item $\cup_{i=1}^{\infty}\Ii_i$ does not satisfy the ACC.
\end{itemize}
Then possibly passing to a subsequence, there exists a DCC set $\Ii'\subset [0,1]$, and a sequence $\{(X_i',B_i')\}_{i=1}^{\infty}$ of $\Qq$-factorial enc pairs of dimension $d$, such that
\begin{enumerate}
    \item $\coeff(B_i')\subseteq\Ii'$ for each $i$,
    \item $\{\mld(X_i',B_i')\}_{i=1}^{\infty}$ is strictly increasing, and
    \item $\mld(X_i',B_i')\ge\mld(X_i,B_i)$ for each $i$.
\end{enumerate}
Moreover, if we further assume that $\Ii$ is a finite set and $\cup_{i=1}^{\infty}\Ii_i$ is a DCC set, then we may choose $\Ii'$ to be a finite set.
\end{lem}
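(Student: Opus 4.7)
My plan is to construct each $(X_i',B_i')$ from $(X_i,B_i)$ by a crepant $\Qq$-factorial birational extraction that retains exactly one of the exceptional divisors of log discrepancy at most $1$. First, after passing to subsequences (iterated Bolzano--Weierstrass), I may assume that $\#\Ii_i=k\le N$ is constant, that $\Ii_i=\{a_i^{(1)}\le\cdots\le a_i^{(k)}\}$ is a consistent labelling with corresponding exceptional prime divisors $E_i^{(1)},\dots,E_i^{(k)}$ over $X_i$, and that each sequence $\{a_i^{(j)}\}_i$ is monotonic with limit $\alpha_j\in[0,1]$. The failure of ACC of $\bigcup_i\Ii_i$ forces at least one $\{a_i^{(j)}\}_i$ to be strictly increasing.

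Choose a strictly increasing index $j^*$ and apply BCHM to produce a $\Qq$-factorial birational morphism $f_i\colon X_i'\to X_i$ whose exceptional divisors are exactly $\{E_i^{(j)}:j\ne j^*\}$. Set
\[
B_i':=f_i^{-1}B_i+\sum_{j\ne j^*}(1-a_i^{(j)})E_i^{(j)},
\]
so that $K_{X_i'}+B_i'=f_i^{*}(K_{X_i}+B_i)$ is the crepant pullback. Then $(X_i',B_i')$ is $\Qq$-factorial klt with $E_i^{(j^*)}$ as its unique exceptional prime divisor of log discrepancy $\le 1$ (of value $a_i^{(j^*)}<1$ for $i$ large), so $(X_i',B_i')$ is enc; moreover $\mld(X_i',B_i')=\mld(X_i,B_i)$ by crepancy, giving condition (3). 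The coefficients of $B_i'$ lie in $\Ii\cup\{1-a_i^{(j)}:j\ne j^*\}$, which is a DCC set $\Ii'$ provided each $\{a_i^{(j)}\}_i$ with $j\ne j^*$ is non-increasing; condition (2) then follows after choosing $j^*$ and the subsequence so that $\mld(X_i,B_i)$ is strictly increasing, using that the mld is pinched between $a_i^{(j^*)}$ and values drawn from the DCC set $\Ii$.

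The main obstacle I anticipate is the configuration where several $\{a_i^{(j)}\}_i$ are simultaneously strictly increasing, so that no choice of $j^*$ leaves all other sequences non-increasing and the naive single-step extraction violates the DCC condition. I plan to address this by an iterated reduction: at each stage extract only those $E_i^{(j)}$ with non-increasing $\{a_i^{(j)}\}_i$ (which preserves DCC-compatible coefficients) to produce an intermediate pair with strictly fewer exceptional divisors of log discrepancy $\le 1$, and recurse; a final step, exploiting the convergence $a_i^{(j)}\to\alpha_j$ and a judicious re-selection of $j^*$ among the strictly increasing indices (for instance maximising $\alpha_{j^*}$), yields the desired enc pair with strictly increasing mld. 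The moreover clause drops out of the same construction: when $\Ii$ is finite and $\bigcup_i\Ii_i$ is DCC, each monotone $\{a_i^{(j)}\}_i$ is non-decreasing, so after subsequencing only finitely many distinct values $1-a_i^{(j)}$ are introduced, and $\Ii'$ can be chosen finite.
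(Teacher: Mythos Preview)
Your overall strategy matches the paper's: pass to a subsequence so that $\#\Ii_i=k$ is fixed and each value-sequence $\{a_i^{(j)}\}_i$ is monotone, first extract (crepantly) the divisors whose values are non-increasing, and then on the resulting model extract all but one of the remaining divisors to obtain an enc pair. The gap is in the last step, and your proposed fix does not close it.

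After the first extraction, \emph{every} remaining exceptional divisor $E_i^{(j)}$ with $a(E_i^{(j)},\,\cdot\,)\le 1$ has $\{a_i^{(j)}\}_i$ strictly increasing. Your ``recursion'' therefore makes no progress: the same configuration persists. The real issue is that a crepant extraction of the other $E_i^{(j)}$ puts coefficients $1-a_i^{(j)}$ on them, and these form an \emph{ACC} (not DCC) set; so the crepant construction cannot produce the required $\Ii'$. The paper resolves this by assigning the \emph{limit} coefficients $1-\alpha_j$ (with $\alpha_j=\lim_i a_i^{(j)}$) to the extracted divisors, which lie in a fixed finite set. This is no longer crepant, so the log discrepancy of the one divisor $E_i^{(j^*)}$ left unextracted jumps up by $\sum_{j\ne j^*}(\alpha_j-a_i^{(j)})\,\mult_{E_i^{(j^*)}}E_i^{(j)}$, and since the multiplicities are not a priori bounded this jump can exceed $1-a_i^{(j^*)}$, destroying the enc property. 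No fixed rule such as ``maximise $\alpha_{j^*}$'' controls this. What is needed is precisely \cite[Lemma~5.3]{Liu18}: for each $i$ there exists a choice of $(j^*,l^*)$ (note there may be several divisors sharing the same value $a_i^{(j)}$, which your labelling suppresses) for which the jump is strictly less than $\alpha_{j^*}-a_i^{(j^*)}$, so that $a(E_i^{(j^*)},X_i',B_i')\in[a_i^{(j^*)},\alpha_{j^*})$. Passing to a subsequence to fix $j^*$ then gives (2) and (3).

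Your argument for the ``moreover'' clause inherits the same problem: with crepant coefficients $1-a_i^{(j)}$ on the strictly increasing $j\ne j^*$ you introduce infinitely many values, so $\Ii'$ is not finite. With the limit-coefficient construction one adds only the finitely many $1-\alpha_j$, and the decreasing $j$'s contribute finitely many values because a decreasing sequence in the DCC set $\bigcup_i\Ii_i$ is eventually constant.
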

\begin{proof}
Possibly passing to a subsequence we may assume that $\#\Ii_i=k\ge 1$ for some positive integer $k\le N$. For each $i$, there exist positive integers $r_{1,i},\dots,r_{k,i}$ and real numbers $\{a_{i,j}\}_{i\ge 1,1\le j\le k}$, such that  $$D(X_i,B_i)_{\le 1}=\{E_{i,1,1},\dots,E_{i,1,r_{1,i}};E_{i,2,1},\dots,E_{i,2,r_{2,i}};\dots;E_{i,k,1},\dots,E_{i,k,r_{k,i}}\}$$ 
for some distinct exceptional prime divisors $E_{i,j,l}$ over $X_i$,  $a_{i,j}=a(E_{i,j,l},X_i,B_i)\le 1$ for any $i,j,l$, and $\{a_{i,j}\}_{i\ge 1,1\le j\le k}$ does not satisfy the ACC. Possibly reordering indices and passing to a subsequence, we may assume that there exists $1\le j_0\le k$, such that
\begin{itemize}
    \item $a_{i,j}$ is strictly increasing for any $1\le j\le j_0$, and
    \item $a_{i,j}$ is decreasing for any $j_0+1\le j\le k$.
\end{itemize}
Let $f_i: Y_i\rightarrow X_i$ be a birational morphism which extracts exactly the set of divisors $$\mathcal{F}_i:=\{E_{i,j,l}\}_{j_0+1\le j\le k,1\le l\le r_{j,i}}$$ and let $K_{Y_i}+B_{Y_i}:=f_i^*(K_{X_i}+B_i)$. Since $a_{i,j}$ is decreasing for any $j_0+1\le j\le k$, the coefficients of $B_{Y_i}$ belong to the DCC set $\tilde\Ii:=\Ii\cup\{1-a_{i,j}\}_{j_0+1\le j\le k}$. By construction, $D(Y_i,B_{Y_i})_{\le 1}=D(X_i,B_i)\backslash\mathcal{F}_i$, and $D(Y_i,B_{Y_i})_{\le 1}$ is a non-empty set as $E_{i,1,1}\in D(Y_i,B_{Y_i})_{\le 1}$.

Let $a_j:=\lim_{i\rightarrow+\infty}a_{i,j}$ for any $1\le j\le j_0$. By \cite[Lemma 5.3]{Liu18}, for each $i$, there exist $1\le j_i\le j_0$ and $1\le l_i\le r_{j_i,i}$, and a birational morphism $g_i: X_i'\rightarrow Y_i$ which extracts exactly all divisors in $D(Y_i,B_{Y_i})_{\le 1}$ except $E_{i,j_i,l_i}$, such that $X_i'$ is $\Qq$-factorial and
$$\sum_{1\le j\le j_0,1\le l\le r_{j,i},(j,l)\not=(j_i,l_i)}(a_j-a_{i,j})\mult_{E_{i,j_i,l_i}}E_{i,j,l}<a_{j_i}-a_{i,j_i}.$$
Let $B_{X_i'}:=(g_i^{-1})_*B_{Y_i}+\sum_{1\le j\le j_0,1\le l\le r_{j,i},(j,l)\not=(j_i,l_i)}(1-a_j)E_{i,j,l}$. Then the coefficients of $B_{X_i'}$ belong to the DCC set $\Ii':=\tilde\Ii\cup\{1-a_j\}_{1\le j\le j_0}$. Moreover, if $\Ii$ is a finite set and $\cup_{i=1}^{\infty}\Ii_i$ is a DCC set, then $\Ii'$ is a finite set. 

By construction, $(X_i',B_{X_i'})$ is enc and $\mld(X_i,B_i)\le a_{i,j_i}\le a(E_{i,j_i,l_i},X_i',B_{X_i'})<a_{j_i}\le 1$. Thus 
$$a_{i,j_i}\le a(E_{i,j_i,l_i},X_i',B_{X_i'})=\mld(X_i',B_{X_i'})<a_{j_i}\le 1.$$
Possibly passing to a subsequence, we may assume that $j_i=j_1$ is a constant. 
Since $a_{i,j_1}$ is strictly increasing for each $i$, $\{\mld(X_i',B_{X_i'})\mid i\in\Zz_{\ge 1}\}$ is not a finite set. Possibly passing to a subsequence, we may assume that $\{\mld(X_i',B_{X_i'})\}_{i=1}^{\infty}$ is strictly increasing. 
\end{proof}

\begin{proof}[Proof of Theorem \ref{thm: enc and tof}]
First, we prove the case of klt flips in dimension $d$. Let
 \begin{center}$
 \xymatrixrowsep{0.135in}
\xymatrixcolsep{0.09in}
\xymatrix{
(X,B):=& (X_0,B_0)\ar@{-->}[rr]\ar@{->}[dr] & &  (X_1,B_1)\ar@{-->}[rr]\ar@{->}[dl] & &  \cdots\ar@{-->}[rr] & &(X_i,B_i)\ar@{-->}[rr]\ar@{->}[dr] & & (X_{i+1},B_{i+1})\ar@{-->}[rr]\ar@{->}[dl] & & \cdots.\\
& & Z_0 &  & & & &  & Z_i & & & \\
}$
\end{center}
be a sequence of klt flips of dimension $d$. Since $a(E,X_i,B_i)\le a(E,X_j,B_j)$ for any $i\le j$ and any prime divisor $E$ over $X$, possibly truncating to a subsequence, we may assume that there exist exceptional prime divisors $E_1,\dots,E_k$ over $X$, such that $D(X_i,B_i)_{\le 1}=D(X,B)_{\le 1}=\{E_1,\dots,E_k\}$ for any $i$. Then $\{a(E_l,X_i,B_i)\mid 1\le l\le k\}_{i=1}^{\infty}$ is a DCC set and the coefficients of $B_i$ belong to a finite set. By Lemma \ref{lem: reduce to enc} and the ACC for (global) mlds of enc pairs with finite coefficients, $\{a(E_l,X_i,B_i)\mid 1\le l\le k\}_{i=1}^{\infty}$ satisfies the ACC. It follows that $\{a(E_l,X_i,B_i)\mid 1\le l\le k\}_{i=1}^{\infty}$ is a finite set. Thus possibly truncating to a subsequence, we may assume that $a(E_l,X_i,B_i)=a(E_l,X_j,B_j)$ for any $i,j$. 

Let $f_0: Y_0\rightarrow X_0$ be the birational morphism which extracts exactly $E_1,\dots,E_k$ for each $i$ such that $Y_0$ is $\mathbb Q$-factorial, and let $K_{Y_0}+B_{Y_0}:=f_0^*(K_{X_0}+B_0)$. We claim that we may construct a sequence of $\mathbb Q$-factorial terminal flips on $K_{Y_0}+B_{Y_0}$. Suppose that we have constructed $(Y_i,B_{Y_i})$ with $f_i:Y_i\to X_i$ such that $K_{Y_i}+B_{Y_i}:=f_i^*(K_{X_i}+B_i)$, and $(Y_0,B_{Y_0})\dashrightarrow (Y_1,B_{Y_1})\cdots \dashrightarrow  (Y_i,B_{Y_i})$ consists of a sequence of terminal flips on $K_{Y_0}+B_{Y_0}$. By \cite[Corollary 1.4.3]{BCHM10}, we may run a $(K_{Y_i}+B_{Y_i})$-MMP over $Z_i$ which terminates with a minimal model $(Y_{i+1},B_{Y_{i+1}})$. Since $(X_{i+1},B_{i+1})$ is the log canonical model of $(Y_i,B_{Y_i})$ over $Z_i$, there exists an induced morphism $f_{i+1}: Y_{i+1}\to X_{i+1}$, such that  $K_{Y_{i+1}}+B_{Y_{i+1}}:=f_{i+1}^*(K_{X_{i+1}}+B_{i+1})$. Since $a(E_l,X_i,B_i)=a(E_l,X_{i+1},B_{i+1})$ for any $l$, $E_l$ is not contracted in the MMP $Y_i\dashrightarrow Y_{i+1}$, and $Y_i\dashrightarrow Y_{i+1}$ only consists of a sequence of flips. Thus we finish the proof of the claim by induction. Now the termination follows from the termination of flips for $\Qq$-factorial terminal pairs.

Finally, we prove the general case. By Lemma \ref{lem: high dimension imply low dimension}, assumptions in Theorem \ref{thm: enc and tof} also hold for $\le d-1$. So we may do induction on $d$, and assume that any sequence of lc flips in dimension $\le d-1$ terminates. Now the termination follows from
the klt case and the special termination (cf. \cite[Corollary 4]{Sho04}, \cite{Fuj07}, \cite[Lemma 2.17(1)]{CT23}, \cite{HL22}).
\end{proof}

\begin{proof}[Proof of Theorem \ref{thm: enc and tof dim 4}] This follows from Theorem \ref{thm: enc and tof} and the termination of canonical fourfold flips \cite{Fuj04,Fuj05}. Note that \cite{Fuj04,Fuj05} only deals with the $\Qq$-coefficients case but the same argument works for the $\Rr$-coefficients case. 
\end{proof}

\subsection{Proof of Theorem \ref{thm: strong lsc+acc to tof}}  

\begin{lem}\label{lem: finite mld of a fixed pair}
Let $(X,B)$ be a pair. Then the set 
\[\{ \mld (X\ni x, B) \mid x\in X\}\]
is finite. In particular, $\min_{x\in S}\mld (X\ni x, B)$ is well-defined for any subset $S$ of $X$.
\end{lem}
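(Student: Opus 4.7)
The plan is to take a log resolution of $(X,B)$ and reduce the computation of $\mld(X\ni x,B)$, as $x$ varies over $X$, to a finite combinatorial problem on the resolution. Choose a log resolution $f\colon Y\to X$ of $(X,B)$ and write $K_Y+B_Y=f^*(K_X+B)$, with $B_Y=\sum_{i=1}^{N}b_i E_i$, where $\{E_1,\dots,E_N\}$ is the finite collection of distinct prime divisors supporting $B_Y$ together with $\Exc(f)$, arranged in simple normal crossings.

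The first key step is the identity
\[\mld(X\ni x,B)=\inf_{y\in Y,\ f(y)=x}\mld(Y\ni y,B_Y)\qquad\text{for every }x\in X.\]
This follows from two standard observations: properness of $f$ induces a bijection between prime divisors $E$ over $X$ with $\Center_X E=\overline{\{x\}}$ and pairs $(y,E)$ where $y\in Y$ satisfies $f(y)=x$ and $E$ is a divisor over $Y$ with $\Center_Y E=\overline{\{y\}}$; and $a(E,X,B)=a(E,Y,B_Y)$ since $K_Y+B_Y=f^*(K_X+B)$.

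Next I would invoke the standard weighted-blow-up computation of mlds for the log smooth sub-pair $(Y,B_Y)$: for any scheme-theoretic point $y\in Y$,
\[\mld(Y\ni y,B_Y)=\codim_Y y-\sum_{i\,:\,y\in E_i}b_i\]
whenever $(Y,B_Y)$ is sub-lc near $y$, and $\mld(Y\ni y,B_Y)=-\infty$ otherwise. Because $\codim_Y y\in\{0,1,\dots,\dim Y\}$ and the subset $\{i:y\in E_i\}$ ranges over a subset of $\{1,\dots,N\}$, the right-hand side takes only finitely many values in $\mathbb{R}\cup\{-\infty\}$. Combined with the key identity above, $\mld(X\ni x,B)$ is an infimum taken within this finite set, hence itself lies in that finite set. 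The ``in particular'' clause then follows immediately, since the minimum of a subset of a finite set always exists.

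The main technical point requiring care is the non-lc case: when some $b_i>1$, one must be careful about when the formula returns a finite value and when it forces $-\infty$. The cleanest way around this is to split $X=U\sqcup \Nlc(X,B)$ with $U:=X\setminus\Nlc(X,B)$ open, apply the argument above to the lc pair $(U,B|_U)$ (where all relevant $b_i\le 1$ on $f^{-1}(U)$ so the formula is straightforward), and then simply observe that $\mld(X\ni x,B)=-\infty$ for every $x\in\Nlc(X,B)$, contributing at most the single additional value $-\infty$ to the set.
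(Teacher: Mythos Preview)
Your proof is correct and follows essentially the same approach as the paper's: reduce to a log resolution and use the explicit formula $\mld(Y\ni y,B_Y)=\codim_Y y-\sum_{i:y\in E_i}b_i$ on the log smooth model. The paper compresses your second and third steps into the single containment $\{\mld(X\ni x,B)\}\subseteq\{\mld(Y\ni y,B_Y)\}$ and cites \cite[Lemma 3.3]{CH21} for the formula you spell out; your identity $\mld(X\ni x,B)=\inf_{f(y)=x}\mld(Y\ni y,B_Y)$ is exactly what justifies that containment once one knows the right-hand values form a finite set.
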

\begin{proof}
Let $f:Y\to X$ be a log resolution of $(X,B)$, and $K_Y+B_Y:=f^{*}(K_X+B)$. Then 
\[\{ \mld (X\ni x, B) \mid  x\in X\}\subseteq \{ \mld (Y\ni y, B_Y) \mid  y\in Y\},\]
and the latter is a finite set by \cite[Lemma 3.3]{CH21}.
\end{proof}

\begin{defn}[{cf. \cite[Examples 19.1.3--19.1.6]{Ful98}, \cite[Ch.II.(4.1.5)]{Kol96}}]\label{defn:cycle}
Let $X$ be a reduced projective scheme and let $k$ be a non-negative integer. 
We denote by $Z_k (X)_{\Qq}$ the group of $k$-dimensional algebraic cycles on $X$ with rational coefficients. 
All cycles which are numerically equivalent to zero form a subgroup of $Z_k(X) _{\mathbb{Q}}$, 
and we denote by $N_k (X)_{\Qq}$ the quotient group. Then $N_k (X)_{\Qq}$ is a finite-dimensional $\Qq$-vector space. 
\end{defn}

\begin{lem}\label{lem:cycle}
Let $k$ be a positive integer, and $f: X \dashrightarrow Y$ a dominant rational map of reduced projective schemes. Suppose that $f$ induces a birational map on each irreducible component of $X$ and $Y$, and $f^{-1}$ does not contract any $k$-dimensional subvariety of $Y$. Then:
\begin{enumerate}
\item $\dim N_k(X)_{\Qq} \ge \dim N_k(Y)_{\Qq}$, and
\item if $f$ contracts some $k$-dimensional subvariety of $X$, then $\dim N_k(X)_{\Qq} > \dim N_k(Y)_{\Qq}$.
\end{enumerate}
\end{lem}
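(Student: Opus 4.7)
The plan is to analyze the rational map $f$ through its graph. Let $\Gamma \subseteq X \times Y$ be the closure of the graph of $f$, with the two projections $p: \Gamma \to X$ and $q: \Gamma \to Y$. Both are proper morphisms, and the hypothesis that $f$ is birational on each irreducible component guarantees that $p$ and $q$ are birational on each irreducible component of $\Gamma$. The idea is to relate $N_k(X)_\Qq$ and $N_k(Y)_\Qq$ to the common space $N_k(\Gamma)_\Qq$ via the induced pushforward maps.

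First, I would check that both pushforwards $p_*: N_k(\Gamma)_\Qq \twoheadrightarrow N_k(X)_\Qq$ and $q_*: N_k(\Gamma)_\Qq \twoheadrightarrow N_k(Y)_\Qq$ are surjective. For $p_*$, any $k$-dimensional subvariety $V \subseteq X$ has a $k$-dimensional strict transform $\widetilde V \subseteq \Gamma$ satisfying $p_*\widetilde V = V$, using that $p$ is birational. For $q_*$, the hypothesis that $f^{-1}$ does not contract any $k$-dimensional subvariety of $Y$ ensures that strict transforms to $\Gamma$ of $k$-subvarieties of $Y$ remain $k$-dimensional, giving the analogous statement. Consequently, $\dim N_k(X)_\Qq = \dim N_k(\Gamma)_\Qq - \dim \ker(p_*)$ and similarly for $Y$.

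The heart of the argument is to establish $\ker(p_*) \subseteq \ker(q_*)$ in $N_k(\Gamma)_\Qq$. First, at the level of cycles: if $V \subseteq \Gamma$ is an irreducible $k$-dimensional subvariety with $\dim p(V) < k$, I claim $\dim q(V) < k$ as well. Indeed, if $\dim q(V) = k$, then $q|_V : V \to q(V)$ is generically finite, and since $q$ is birational, only one $k$-dimensional component of $q^{-1}(q(V))$ dominates $q(V)$ generically, namely the strict transform of $q(V)$ in $\Gamma$; thus $V$ must equal this strict transform. But then the hypothesis forces $\dim p(V) = k$, contradicting our assumption. To pass from this cycle-level inclusion to the numerical level, I would decompose a cycle representative $\alpha$ of any class in $\ker(p_*)$ as $\sigma_X(p_*\alpha) + \gamma$ where $\sigma_X$ denotes strict transform from $X$ and $\gamma$ is $p$-contracted (hence, by the claim, also $q$-contracted), and then apply the projection formula, expressing $q^*E_i$ for $E_i \in \mathrm{Pic}(Y)$ in terms of $p^*M_i$ plus corrections supported on $q$-exceptional divisors, to conclude that $q_*\alpha \equiv 0$ on $Y$.

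From $\ker(p_*) \subseteq \ker(q_*)$, the map $q_*$ factors through $p_*$, producing a surjection $N_k(X)_\Qq \twoheadrightarrow N_k(Y)_\Qq$, which proves (1). For (2), if $f$ contracts some $k$-dimensional subvariety $V \subseteq X$, then its strict transform $\widetilde V \subseteq \Gamma$ satisfies $p_*\widetilde V = V \not\equiv 0$ in $N_k(X)_\Qq$ (since $V$ intersects positively with the top power of an ample class on the component of $X$ containing it), while $q_*\widetilde V = 0$ as a cycle because $\dim q(\widetilde V) = \dim f(V) < k$. Hence $[\widetilde V] \in \ker(q_*) \setminus \ker(p_*)$, so the inclusion of kernels is strict, forcing $\dim N_k(X)_\Qq > \dim N_k(Y)_\Qq$. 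The main obstacle I anticipate is the lift from cycle-level to numerical-level kernel inclusion in the key step; the bookkeeping of $q$-exceptional corrections in the projection formula will be the most delicate part of the argument.
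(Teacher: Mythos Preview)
Your overall strategy—produce a surjection $N_k(X)_\Qq \twoheadrightarrow N_k(Y)_\Qq$ and, for (2), exhibit a nonzero class in its kernel—is exactly the paper's. The paper's proof is a two-liner: it simply asserts that $f_*: N_k(X)_\Qq \to N_k(Y)_\Qq$ is surjective, and for (2) notes that a contracted $k$-subvariety $W$ has $[W]\not\equiv 0$ while $f_*[W]\equiv 0$. Your graph-closure argument is just an unpacking of what ``$f_*$'' means for a rational map, and your cycle-level kernel inclusion (every $p$-contracted $k$-subvariety of $\Gamma$ is $q$-contracted) is correct and is precisely what makes $f_*$ well defined on $Z_k$.

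Where your write-up wobbles is in the step you yourself flag as delicate: lifting $\ker(p_*)\subseteq\ker(q_*)$ from $Z_k(\Gamma)$ to $N_k(\Gamma)$. Your decomposition $\alpha=\sigma_X(p_*\alpha)+\gamma$ only reduces the question back to ``$\beta\equiv 0$ on $X$ $\Rightarrow$ $f_*\beta\equiv 0$ on $Y$'', so nothing has been gained yet. And the proposed fix—writing $q^*E_i=p^*M_i+(\text{corrections})$ with corrections supported on \emph{$q$-exceptional} divisors—does not hold as stated: expressing a line bundle on $\Gamma$ in terms of $p^*\mathrm{Pic}(X)$ naturally produces corrections supported on \emph{$p$-exceptional} divisors, and the hypothesis only controls contracted $k$-subvarieties, not divisors, so there is no reason those corrections pair trivially with $\sigma_X(\beta)$. (On reduced, possibly non-normal, non-$\Qq$-factorial schemes, even writing such a decomposition is problematic.) The paper does not address this point either; it treats the existence of $f_*$ on $N_k$ as given, in line with the references to Fulton and Koll\'ar. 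So your proposal is not less complete than the paper's on this issue—it is just more explicit about where the difficulty lies—but the specific mechanism you propose for resolving it would not work as written.
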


\begin{proof}
(1) This follows from the fact that $f_*: N_{k}(X)_{\Qq} \to N_{k}(Y)_{\Qq}$ is surjective.

(2) Let $W \subset X$ be a subvariety of dimension $k$ which is contracted by $f$. Then the cycle $[W]$ satisfies $[W] \not \equiv 0$ in $Z_k(X)_{\Qq}$, $f_* [W] \equiv 0$ in $Z_k(Y)_{\Qq}$, and (2) is proved. 
\end{proof}

Shokurov proved that the ACC and the LSC conjectures for mlds imply the termination of flips \cite{Sho04}. The following slightly stronger result actually follows from similar arguments as his proof. For the reader's convenience, we give a proof in details here.
\begin{thm}\label{thm: Strong ACC LSC imply termination}
Let $d$ be a positive integer, $a$ a non-negative real number, and $\Ii\subset[0,1]$ a finite set. Suppose that 
\begin{enumerate}
    \item the set of mlds
    $$\{\mld(X\ni x,B)\mid \mld(X,B)>a \text{ (resp.} \ge a), \dim X=d, \coeff(B)\subseteq\Ii\}$$
    satisfies the ACC, and
    \item for any pair $(X,B)$ of dimension $d$ such that $\mld(X,B)>a$ (resp. $\ge a$),
    $$x\rightarrow\mld(X\ni x,B)$$
    is lower-semicontinuous for closed points $x$. 
\end{enumerate}
Then for any pair $(X,B)$ with $\dim X=d$ and $\mld(X,B)>a$ (resp. $\ge a$), any sequence of $(K_X+B)$-flips terminates.
\end{thm}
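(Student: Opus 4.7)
The plan is to adapt Shokurov's argument from \cite{Sho04} to the restricted class of pairs with $\mld > a$ (resp.\ $\geq a$). Given a sequence of flips
$$(X,B)=(X_0,B_0) \dashrightarrow (X_1,B_1) \dashrightarrow \cdots$$
with flipping contractions $X_i\to Z_i\leftarrow X_{i+1}$, I would first observe that for every prime divisor $E$ over $X$ the log discrepancies $a(E,X_i,B_i)$ form a non-decreasing sequence bounded above by $\dim X$, and that $\mld(X_i,B_i)\ge\mld(X,B)>a$ (resp.\ $\geq a$) holds throughout by the monotonicity of mlds under flips. Hence every $(X_i,B_i)$ belongs to the class to which hypotheses (1) and (2) apply, so the hypotheses may be used at every stage of the sequence.

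Next I would invoke hypothesis (1) together with Lemma \ref{lem: finite mld of a fixed pair} to produce, after truncating the sequence, a fixed finite set $T\subset [a,\dim X]$ such that $\mld(X_i\ni x,B_i)\in T$ for every $i$ and every closed point $x\in X_i$, and such that each individual sequence of log discrepancies $\{a(E,X_i,B_i)\}_i$ stabilizes. Hypothesis (2) then guarantees that the sublevel sets
$$V_{i,t}:=\{x\in X_i\text{ closed}\mid \mld(X_i\ni x,B_i)\leq t\}$$
are closed in $X_i$ for every $t\in T$, yielding a finite stratification of $X_i$ that is compatible with the flipping and flipped contractions in the sense that the induced rational map $f_i\colon V_{i,t}\dashrightarrow V_{i+1,t}$ is birational on each irreducible component.

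The heart of the argument would then be a strict-decrease principle for the cycle-group dimensions $\dim N_k(V_{i,t})_{\Qq}$ via Lemma \ref{lem:cycle}. In each flip the flipping contraction contracts some curve to a point $z\in Z_i$, and strict monotonicity of at least one log discrepancy centered above $z$, combined with the stabilization step above, forces that no new divisorial valuation with log discrepancy $\leq t$ is introduced on the flipped side. Via LSC this translates into the statement that $f_i|_{V_{i,t}}$ contracts the flipping curves lying in $V_{i,t}$ without extracting any subvariety of equal dimension on the flipped side. Applying Lemma \ref{lem:cycle} with $k$ equal to that dimension produces a strict inequality $\dim N_k(V_{i,t})_{\Qq}>\dim N_k(V_{i+1,t})_{\Qq}$ for infinitely many $i$ and some fixed pair $(t,k)$, contradicting non-negativity of these dimensions. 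The reduction from general flips to $\Qq$-factorial klt flips via $\Qq$-factorialization, and then to lc flips via special termination, proceeds exactly as in the proof of Theorem \ref{thm: enc and tof}.

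I expect the principal obstacle to be the precise translation between the valuation-theoretic stabilization of log discrepancies furnished by ACC and the geometric stabilization of the sublevel sets $V_{i,t}$ needed to feed Lemma \ref{lem:cycle}; this is exactly where LSC enters, converting closure under specialization of mlds into closedness of $V_{i,t}$. A secondary bookkeeping issue is to ensure that $f_i$ never extracts a new subvariety of dimension $\leq k$ on the flipped side after the truncation point, which amounts to ruling out the creation of new valuations with log discrepancy $\leq t$ on the flipped side, and again relies on combining the ACC-plus-truncation step with LSC.
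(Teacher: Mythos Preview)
Your overall strategy---sublevel sets plus Lemma~\ref{lem:cycle}---is the right one, but there is a real gap in how you invoke ACC. The claim that hypothesis (1) together with Lemma~\ref{lem: finite mld of a fixed pair} yields, after truncation, a \emph{finite} set $T$ containing every value $\mld(X_i\ni x,B_i)$ is not justified: ACC forbids infinite increasing sequences, but the union $\bigcup_i\{\mld(X_i\ni x,B_i)\mid x\in X_i\}$ can perfectly well contain infinite decreasing sequences and hence be infinite, and truncating finitely many steps does not help. Likewise, the assertion that each sequence $\{a(E,X_i,B_i)\}_i$ stabilizes does not follow from ACC (these are arbitrary log discrepancies, not mlds).

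The paper does not try to control all local mlds at once. Instead it sets $a_i:=\min_{x\in\Exc(\phi_i)}\mld(X_i\ni x,B_i)$ and $\alpha_i:=\inf_{j\ge i}a_j$, and first proves that each $\alpha_i$ is \emph{attained} as some $a_{n_i}$: if not, one finds infinitely many distinct $a_{i_l}$, and minimality of the $a_j$ forces the divisor computing each $a_{i_l}$ to have center off every earlier flipping locus, so its log discrepancy is unchanged back on $X_i$---contradicting Lemma~\ref{lem: finite mld of a fixed pair} for the single pair $(X_i,B_i)$. Then $\{\alpha_i\}$ is increasing inside the ACC set of hypothesis (1), hence stabilizes to a single value $a'$. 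One works with this one threshold: pick $k$ maximal so that infinitely many flips have a $k$-dimensional point on $\Exc(\phi_i)$ with mld $=a'$, let $S_i$ be the $k$-dimensional points with mld $\le a'$, and apply LSC plus Lemma~\ref{lem:cycle} to the closures $W_i$. The key bijection $S_i\setminus\Exc(\phi_i)\to S_{i+1}$ holds because any $x_{i+1}\in S_{i+1}\cap\Exc(\phi_i^+)$ would force $a_i<a'$. Finally, your last paragraph about $\Qq$-factorialization and special termination is extraneous: that reduction belongs to the proof of Theorem~\ref{thm: enc and tof}, not to this theorem, which is proved directly.
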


\begin{proof}
\noindent\textbf{Step 1}. In this step, we introduce some notation. Suppose that there exists an infinite sequence of $(K_X+B)$-flips,
\[
(X,B):= (X_0,B_0) \overset{f_0}{\dashrightarrow} (X_1,B_1) \overset{f_1}{\dashrightarrow}  \cdots 
(X_{i-1},B_{i-1})\overset{f_{i-1}}{\dashrightarrow} (X_i,B_i) \overset{f_{i}}{\dashrightarrow}(X_{i+1},B_{i+1})  \cdots.
\]
For each $i\ge 0$, denote by $\phi_i: X_i\to Z_i$ and $\phi_i^+: X_{i+1}\to Z_i$ the corresponding flip contraction and flipped contraction between quasi-projective normal varieties respectively. Let $a_i:=\min_{x_i\in\mathrm{Exc}(\phi_i)}\mld(X_i\ni x_i,B_i)$, and $\alpha_i:=\inf\{a_j\mid j\ge i\}$. There exists an exceptional prime divisor $E_i$ over $X_i$, such that $a_i=a(E_i,X_i,B_i)$, and $\Center_{X_i}E_i\subseteq \mathrm{Exc}(\phi_i)$. We note that $a_i>a$ (resp. $a_i\ge a$) as $\mld(X_i,B_i)\ge \mld(X,B)$ and $\codim \mathrm{Exc}(\phi_i)\ge 2$. 

\medskip

\noindent\textbf{Step 2}. In this step, for each $i\ge 0$, we show that $\alpha_i=a_{n_i}$ for some $n_i\ge i$. 

For each $i\ge 0$, let $\alpha_{i}^l:=\min\{a_j\mid i\le j\le l\}$ for $l\ge i$. For each $l\ge i$, there exist $i\le i_l\le l$ and an exceptional prime divisor $E_{i_l}$ over $X_{i_l}$, such that $a_{i_l}=\alpha_{i}^l=a(E_{i_l},X_{i_l},B_{i_l})$, and $\Center_{X_{i_l}}E_{i_l}\subseteq \mathrm{Exc}(\phi_{i_l})$. Suppose that $\alpha_i\neq \alpha_{i}^l$ for any $l\ge i$, then there are infinitely many $l\ge i+1$, such that $i_l>i_{l-1}$. For each such $l$, and any $i\le j<i_l$, we have $\Center_{X_j} E_{i_l}\notin \mathrm{Exc}(\phi_j)$, otherwise $a(E_{i_l},X_j,B_j)< a(E_{i_l},X_{i_l},B_{i_l})=\alpha_i^l\le a_j$, which contradicts the definition of $a_j$. Thus $\mld(X_i\ni x_{i,i_l},B_i)=a(E_{i_l},X_i,B_i)=a(E_{i_l}, X_{i_l},B_{i_l})=a_{i_l}$, where $x_{i,i_l}=\Center_{X_i} E_{i_l}$. In particular, $\{\mld(X_i\ni x_{i,i_l},B_i)\mid l\ge i\}$ is an infinite set which contradicts Lemma \ref{lem: finite mld of a fixed pair}. Thus $\alpha_i=a_{n_i}$ for some $n_i\ge i$.

\medskip

\noindent \textbf{Step 3}. In this step, we show that possibly passing to a subsequence of flips, there exists a non-negative real number $a'>a$ (resp. $a'\ge a$), such that \begin{itemize}
\item $a_i \ge a'$ for any $i \ge 0$, and 
\item $a_i = a'$ for infinitely many $i$. 
\end{itemize}

Since $\mld (X_i,B_i)\ge \mld (X,B)>a$ (resp. $\mld (X_i,B_i)\ge \mld (X,B)\ge a$) for any $i$, by \textbf{Step 2} and (1), 
$$\{\alpha_i\mid i\in\Zz_{\ge 0}\}=\{a_{n_i}\mid i\in\Zz_{\ge 0}\}$$
is a finite set. In particular, there exist a non-negative integer $N$, and a unique non-negative real number $a'> a$ (resp. $a'\ge a$), such that $a_{n_i} \ge a'$ for any $i \ge N$, and $a_{n_i} = a'$ for infinitely many $i$.

\medskip

\noindent \textbf{Step 4}. In this step, we construct $k,S_i,W_i$, and show some properties. 

Possibly passing to a subsequence, we may assume there exists a non-negative integer $k$ satisfying the following. 
\begin{itemize}
\item For any $i$, any point $x_i \in \mathrm{Exc}(\phi_i)$ with $(X_i\ni x_i, B_i) = a'$ satisfies $\dim x_i \le k$. 
\item For infinitely many $i$, there exists a $k$-dimensional point $x_i \in \mathrm{Exc}(\phi_i)$ such that $(X_i\ni x_i, B_i) = a'$. 
\end{itemize}

Let $S_i$ be the set of the $k$-dimensional points $x_i \in X_i$ with 
$\mld(X_i\ni x_i, B_i) \le a'$, and $W_i \subset X_i$ the Zariski closure of $S_i$. Then by (2) and \cite[Proposition 2.1]{Amb99}, any $k$-dimensional point $x_i \in W_i$ belongs to $S_i$. 

\medskip

\noindent \textbf{Step 5}. In this step, we prove that $f_i$ induces 
\begin{itemize}
\item a bijective map $S_i \setminus \mathrm{Exc}(\phi_i) \to S_{i+1}$, and 
\item a dominant morphism $f_i' : W_i \setminus \mathrm{Exc}(\phi_i) \to W_{i+1}$. 
\end{itemize}

It suffices to show the first assertion as the second one follows from the first one. 

For any $x_i \in S_i \setminus  \mathrm{Exc}(\phi_i)$, $f_i(x_i) \in S_{i+1}$ as 
\[
\mld (X_{i+1}\ni f_i(x_i),B_{i+1}) = \mld (X_i\ni x_i, B_i) \le a'. 
\]
For any $x_{i+1} \in S_{i+1}$, suppose that $x_{i+1} \in \mathrm{Exc}(\phi_i^{+})$, then  
\[
\min_{x_i\in \mathrm{Exc}(\phi_i)}\mld(X_i\ni x_i, B_i) < \mld(X_{i+1}\ni x_{i+1},B_{i+1}) \le a', 
\]
which contradicts \textbf{Step 3}. It follows that $x_{i+1} \notin \mathrm{Exc}(\phi_i^{+})$ and $f_i$ induces a bijective map $S_i \setminus \mathrm{Exc}(\phi_i) \to S_{i+1}$.

\medskip

\noindent\textbf{Step 6}. In this step, we derive a contradiction, and finish the proof.

By \textbf{Step 5}, the number of the irreducible components of $W_i$ is non-increasing. Thus possibly passing to a subsequence, we may assume that 
$f_i$ induces a birational map on each irreducible component of $W_i$. On the one hand, by \textbf{Step 5} and \textbf{Step 4}, $f_i ^{-1}$ does not contract any $k$-dimensional subvariety of $W_{i+1}$. On the other hand, by construction of $k$ in \textbf{Step 4}, there exist infinitely many $i$ such that 
$\mld(X_i\ni x_i, B_i) = a'$ for some $k$-dimensional point $x_i \in \mathrm{Exc}(\phi_i)$. For such $i$ and $x_i$, by \textbf{Step 5}, $x _i \in W_i$ is contracted by $f_i$, which contradicts Lemma \ref{lem:cycle}. 
\end{proof}

\begin{thm}[$=$Theorem \ref{thm: strong lsc+acc to tof}]\label{thm: ter acc lsc+enc to tof}
Let $d$ be a positive integer. Assume that 
\begin{enumerate}
    \item the ACC for mlds of terminal pairs with finite coefficients in dimension $d$, i.e.
    $$\{\mld(X\ni x,B)\mid \mld(X,B)>1, \dim X=d, \coeff(B)\subseteq\Ii\}$$
    satisfies the ACC for any finite set $\Ii$, and
    \item the LSC for mlds of terminal pairs in dimension $d$, i.e. for any pair $(X,B)$ of dimension $d$ such that $\mld(X,B)>1$,
    $$x\rightarrow\mld(X\ni x,B)$$
    is lower-semicontinuous for closed points $x$.
\end{enumerate}
Then any sequence of terminal flips in dimension $d$ terminates. Moreover, if we additionally assume that Conjecture \ref{conj: acc mld enc}(2') holds in dimension $d$, then any sequence of lc flips in dimension $\le d$ terminates. 
\end{thm}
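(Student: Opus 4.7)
The plan is to combine Theorem \ref{thm: Strong ACC LSC imply termination} (a Shokurov-style reduction of termination to ACC plus LSC for mlds) with Theorem \ref{thm: enc and tof} (a reduction of lc-flip termination to $\Qq$-factorial terminal-flip termination plus the enc ACC). Neither part of the theorem will require a new argument; the work is to verify that the hypotheses of the two previous theorems are met.

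First I would specialize Theorem \ref{thm: Strong ACC LSC imply termination} to $a=1$: its condition (1) then reads as the ACC of $\{\mld(X\ni x,B)\mid \mld(X,B)>1,\,\dim X=d,\,B\in\Ii\}$ for every finite set $\Ii$, which is precisely hypothesis (1) of the present theorem; its condition (2) reads as the lower-semicontinuity of $x\mapsto\mld(X\ni x,B)$ at closed points for pairs with $\mld(X,B)>1$, which is precisely hypothesis (2). The conclusion then yields termination of every sequence of $(K_X+B)$-flips starting from a pair $(X,B)$ with $\mld(X,B)>1$; in particular, every sequence of $\Qq$-factorial terminal flips in dimension $d$ terminates, which is the first assertion of the theorem.

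For the moreover part, I would feed this conclusion into Theorem \ref{thm: enc and tof}. That theorem needs two inputs: (a) the ACC for global mlds for enc pairs with finite coefficients in dimension $d$, and (b) the termination of $\Qq$-factorial terminal flips in dimension $d$. Input (a) is exactly the additional assumption Conjecture \ref{conj: acc mld enc}(2'), and input (b) was just established in the previous paragraph. Theorem \ref{thm: enc and tof} therefore delivers the termination of any sequence of lc flips in dimension $\le d$, completing the proof.

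There is no genuinely hard step at this level, since the substantive content is already packaged in Theorems \ref{thm: Strong ACC LSC imply termination} and \ref{thm: enc and tof}; the present theorem is essentially an assembly of those two results together with Conjecture \ref{conj: acc mld enc}(2'). The only point that requires a moment's care is matching conventions, namely that the condition `$\mld(X,B)>1$' appearing in Theorem \ref{thm: Strong ACC LSC imply termination} is exactly what is meant by `terminal pair with finite coefficients' in hypotheses (1) and (2), so that the ACC and LSC inputs translate cleanly between the two statements.
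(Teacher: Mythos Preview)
Your proposal is correct and follows exactly the paper's approach: the paper's proof is the single sentence ``This follows from Theorem \ref{thm: Strong ACC LSC imply termination} when $a=1$ and Theorem \ref{thm: enc and tof},'' and you have unpacked precisely this combination.
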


\begin{proof}
This follows from Theorem \ref{thm: Strong ACC LSC imply termination} when $a=1$ and Theorem \ref{thm: enc and tof}.
\end{proof}

\begin{rem}

Generalized pairs, introduced in \cite{BZ16}, have become central topics in birational geometry in recent years. By \cite{HL23a}, we can run MMPs for any $\Qq$-factorial lc generalized pair. Therefore, studying the termination of flips for generalized pairs is also intriguing. It is important to note that the proofs in this section are expected to work for generalized pairs as well. For instance, \cite[Theorem 4.8]{CGN24} provides a proof of Theorem \ref{thm: Strong ACC LSC imply termination} for generalized pairs when $a=0$. Consequently, we anticipate that Theorems \ref{thm: enc and tof}, \ref{thm: enc and tof dim 4}, and \ref{thm: strong lsc+acc to tof} will also apply to generalized pairs by using similar arguments to those in this section. For the sake of brevity and the reader's convenience, we omit the detailed proofs here.
\end{rem}

\section{Theorems \ref{thm: acc mld enc dim 3} and \ref{thm: acc mld 3dim bounded ld number} for canonical threefolds}

In this section, we prove Theorem \ref{thm: acc mld enc dim 3} when $X$ is canonical.

We first prove Theorem \ref{thm: acc mld enc dim 3} when $(X,B)$ is non-canonical in codimension $2$. More precisely, we have:

\begin{lem}\label{lem: acc enc center curve case}
Let $\Ii\subset[0,1]$ be a DCC set. Assume that $(X,B)$ is an enc pair of dimension $3$ and $E$ a prime divisor over $X$, such that
\begin{enumerate}
\item $\coeff(B)\subseteq\Ii$,
\item $\dim\Center_XE=1$, and
\item $a(E,X,B)\le 1$.
\end{enumerate}
Then $\mld(X,B)$ belongs to an ACC set.
\end{lem}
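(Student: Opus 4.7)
The plan is to reduce to the ACC for mlds on surfaces (Alexeev). First, I would exploit the enc hypothesis. By Definition~\ref{defn: enc}, $(X,B)$ admits a unique exceptional prime divisor with log discrepancy at most $1$; since $a(E,X,B)\le 1$ by assumption, this unique divisor must be $E$, and $\mld(X,B)=a(E,X,B)$. Moreover, every other exceptional divisor $F$ over $X$ satisfies $a(F,X,B)>1\ge a(E,X,B)$. Letting $\eta$ denote the generic point of $C:=\Center_X E$, it follows that
\[
\mld(X\ni\eta,B)=a(E,X,B)=\mld(X,B),
\]
since any divisor contributing to $\mld(X\ni\eta,B)$ has center containing $C$, and among these $E$ is the unique one with value $\le 1$.

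Next, I would reduce to a surface germ via a general hyperplane section. Pick a general closed point $x\in C$ and take a general hyperplane section $H\subset X$ through $x$ transverse to $C$. By choosing $H$ sufficiently general (Bertini on $X$ and on a log resolution $\pi:Y\to X$ of $(X,B)$), the scheme-theoretic restriction $B_H:=B|_H$ is a well-defined $\Rr$-divisor with coefficients in $\Ii$, and $\pi^{-1}(H)\to H$ is a log resolution of $(H\ni x, B_H)$. A standard adjunction argument on $\pi^{-1}(H)$ shows that the map $F\mapsto F\cap \pi^{-1}(H)$ gives a bijection between prime divisors over $X$ with center equal to $C$ and prime divisors over $H$ with center equal to $x$, and this bijection preserves log discrepancies. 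Consequently
\[
\mld(H\ni x, B_H)=\mld(X\ni\eta,B)=\mld(X,B).
\]

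Finally, $(H\ni x, B_H)$ is a surface germ with coefficients in the DCC set $\Ii$, so by Alexeev's ACC for mlds on surfaces \cite{Ale93} (see also \cite{Sho94b,HL20}), $\mld(H\ni x, B_H)$ belongs to an ACC set depending only on $\Ii$, and the lemma follows. The main technical obstacle is making the reduction to the surface slice precise: one needs to justify that a sufficiently general $H$ (i) preserves the DCC coefficient condition, (ii) yields a log resolution by restriction, and (iii) gives the claimed correspondence of log discrepancies. All three points are routine applications of Bertini-type results on $X$ and on a fixed log resolution, and do not require any control beyond the generic behavior near the curve $C$.
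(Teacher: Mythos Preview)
Your proposal is correct and follows essentially the same approach as the paper: reduce to the surface case by cutting with a general hyperplane through the curve $C=\Center_XE$, then invoke Alexeev's ACC for surface mlds. The only difference is in how the equality $\mld(H,B_H)=\mld(X,B)$ is established: the paper uses a sandwich argument, citing \cite[Lemma~5.17(1)]{KM98} for $\mld(H,B_H)\ge a(E,X,B+H)=a(E,X,B)$ and \cite[Corollary~1.4.5]{BCHM10} for the reverse inequality, whereas you argue directly via a Bertini-on-a-log-resolution bijection between divisors over $X\ni\eta$ and divisors over $H\ni x$. Both routes are standard; yours is slightly more hands-on, the paper's is more citation-efficient.
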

\begin{proof}
Since $(X,B)$ is enc, $a(E,X,B)=\mld(X,B)$. Let $C:=\Center_XE$, $H$ a general hyperplane on $X$ which intersects $C$, and $K_H+B_H:=(K_X+B+H)|_H$. Then the coefficients of $B_H$ belong to a DCC set. By \cite[Lemma 5.17(1)]{KM98}, $$\mld(H,B_H)\ge\mld(X,B+H)\ge a(E,X,B+H)=a(E,X,B)=\mld(X,B).$$ 
By \cite[Corollary 1.4.5]{BCHM10}, $a(E,X,B+H)\ge\mld(H,B_H)$. Thus $\mld(H,B_H)=\mld(X,B)$. By \cite[Theorem 3.8]{Ale93}, $\mld(H,B_H)$ belongs to an ACC set, hence $\mld(X,B)$ belongs to an ACC set.
\end{proof}

\subsection{Strictly canonical threefolds} In this subsection, we prove Theorem \ref{thm: acc mld enc dim 3} when $X$ is strictly canonical. More precisely, we have:

\begin{thm}\label{thm: acc mld enc dim 3 strictly canonical case}
Let $\Ii\subset [0,1]$ be a DCC set. Then
$$\{\mld(X\ni x,B)\mid \dim X=3, (X\ni x,B)\text{ is enc}, X\text{ is strictly canonical}, \coeff(B)\subseteq\Ii\}$$
satisfies the ACC.
\end{thm}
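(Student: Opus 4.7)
The plan is to extract $E$ via a crepant divisorial contraction and apply the boundedness of $(n,\Ii_0)$-decomposable $\Rr$-complements. By Lemma~\ref{lem: acc enc center curve case} it suffices to treat the case $\Center_X E=\{x\}$, a closed point, where $E$ is the unique exceptional divisor computing $\mld(X\ni x,B)<1$. Since $X$ is strictly canonical there exists an exceptional prime divisor $F$ over $X$ with $a(F,X)=1$; the inequality $a(F,X,B)\le 1$ combined with the enc hypothesis forces $F=E$, so $E$ is the unique crepant exceptional divisor over $X$. A $\Qq$-factorial terminalization of $X$ then produces a crepant divisorial contraction $f:Y\to X$ with $\Exc(f)=E$ and $-E$ being $f$-ample, and every exceptional divisor over $Y$ has log discrepancy $>1$ over $X$, so $Y$ is terminal. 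Writing $\alpha:=1-\mld(X\ni x,B)\in(0,1)$ we have $K_Y+B_Y=f^{*}(K_X+B)$ with $B_Y=f^{-1}_*B+\alpha E$.

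Next I would apply Theorem~\ref{thm: ni decomposable complement} to $(Y/X\ni x,B_Y)$. This pair is its own $\Rr$-complement: on the germ $X\ni x$ the local Picard group is trivial, so every $\Rr$-Cartier divisor is $\Rr$-linearly trivial, and hence $K_Y+B_Y=f^{*}(K_X+B)\sim_\Rr 0$ over $x$, while $(Y,B_Y)$ is klt. Moreover $Y$ is of Fano type over $X$: for small $t>0$ the klt pair $(Y,tE)$ satisfies $-(K_Y+tE)\equiv_f -tE$, which is $f$-ample. Arguing by contradiction, suppose an infinite sequence $(X_i\ni x_i,B_i)$ of enc strictly canonical pairs has $\mld(X_i\ni x_i,B_i)$ strictly increasing. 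Setting $\alpha_i:=1-\mld(X_i\ni x_i,B_i)$ yields a strictly decreasing (hence DCC) sequence, so $\Ii':=\Ii\cup\{\alpha_i\}_{i\ge 1}$ is DCC. Theorem~\ref{thm: ni decomposable complement} applied in dimension $3$ with coefficient set $\Ii'$ then yields a positive integer $n$ and a finite set $\Ii_0\subset(0,1]$ (depending only on $\Ii'$), together with $(n,\Ii_0)$-decomposable $\Rr$-complements $B^+_{Y_i}=\sum_j a_{i,j}B^+_{Y_i,j}$.

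The main technical obstacle is upgrading the resulting bound on $\alpha_i$ to membership in a finite set. Since $(Y_i,B^+_{Y_i,j})$ is an $n$-complement of itself, the coefficient $\mult_{E_i}B^+_{Y_i,j}$ lies in the finite set $\{0,\tfrac{1}{n},\ldots,1\}$; combined with $a_{i,j}\in\Ii_0$ (finite) and the bound on the number of summands afforded by $a_{i,j}\ge\min\Ii_0>0$, the quantity $\mult_{E_i}B^+_{Y_i}=\sum_j a_{i,j}\mult_{E_i}B^+_{Y_i,j}$ belongs to a finite set $T\subset[0,1]$ depending only on $\Ii$. A priori, $B^+_{Y_i}\ge B_{Y_i}$ only yields $\alpha_i\le\max T$. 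To upgrade this to $\alpha_i\in T$, I would exploit that $(Y_i,B_{Y_i})$ is itself an $\Rr$-complement of itself and apply the uniform rational polytope argument of \cite{HLS19} to decompose $B_{Y_i}$ — rather than a larger $B^+_{Y_i}$ — as $\sum_j a_{i,j}B^+_{Y_i,j}$ with each $(Y_i,B^+_{Y_i,j})$ an $n$-complement of itself. Then $\alpha_i=\sum_j a_{i,j}\mult_{E_i}B^+_{Y_i,j}\in T$ for all $i$, contradicting the strict decrease of $\alpha_i$ and completing the proof.
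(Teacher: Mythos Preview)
Your strategy of extracting $E$ crepantly and applying complement theory on $Y$ is natural, and the argument is fine up through the inequality $\alpha_i\le\max T$. The gap is in the final upgrade to $\alpha_i\in T$. Theorem~\ref{thm: ni decomposable complement} only produces $B^+_{Y_i}\ge B_{Y_i}$, and over the germ the effective difference $B^+_{Y_i}-B_{Y_i}\sim_{\Rr,X_i}0$ need not vanish along $E_i$: it could be $f_i^*G_i$ for an effective $G_i$ on $X_i$, contributing $\mult_{E_i}f_i^*G_i>0$. Your proposed fix via the uniform rational polytope of \cite{HLS19} does not close this. In the form used in this paper (see the proof of Theorem~\ref{thm: enc ACC fin coeff}), that result takes a \emph{finite} coefficient set and produces fixed reals $b_1,\dots,b_{l+1}$ and a bounded denominator $M$ so that every coefficient is $\sum_j b_j c_j$ with $c_j\in\frac{1}{M}\Zz\cap[0,1]$. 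If such uniform data existed for your infinite DCC set $\Ii'=\Ii\cup\{\alpha_i\}$, it would force $\Ii'$ itself into a finite set, which is absurd; so the decomposition data must depend on $i$, and the conclusion $\alpha_i\in T$ does not follow. Note also that the rational polytope by itself does not give $n$-complements: making $(Y_i,B_{Y_i,j})$ an $n$-complement of itself requires bounding the Cartier index of $K_{Y_i}+B_{Y_i,j}$, which already needs an extra input in dimension three.

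The paper's proof is genuinely different and uses two ingredients you omit. It applies Theorem~\ref{thm: ni decomposable complement} to $(X_i\ni x_i,B_i)$ rather than to $(Y_i,B_{Y_i})$, passes to a subsequence with $a:=a(E_i,X_i,B_i^+)$ constant and $a_i-a>\delta>0$, and then on the extraction $f_i:Y_i\to X_i$ uses the length bound for extremal rays to find a contracted curve $C_i$ with $0<-E_i\cdot C_i<6/\delta$. Kawamata's canonical index theorem \cite[Theorem~1.1]{Kaw15a} gives $60K_{Y_i}$ Cartier, and since $Y_i$ is terminal Lemma~\ref{lem: local index is local cartier index} forces all intersection numbers with $C_i$ into $\frac{1}{60}\Zz$. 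As $K_{Y_i}\cdot C_i=0$, one reads off
\[
a_i=1-\sum_j b_{i,j}\,\frac{B_{Y_i,j}\cdot C_i}{-E_i\cdot C_i}
\]
with $b_{i,j}\in\Ii$ and the fractions lying in a fixed finite set of nonnegative rationals; this places $a_i$ in an ACC set and gives the contradiction. The cone theorem and Kawamata's index bound are thus the essential missing ingredients.
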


\begin{proof}
Suppose that the statement does not hold. Then there exists a sequence of enc pairs $(X_i\ni x_i,B_i)$ of dimension $3$, such that
\begin{itemize}
    \item $\mld(X_i)=1$, and
    \item $a_i:=\mld(X_i\ni x_i,B_i)$ is strictly increasing.
\end{itemize}
Possibly taking a small $\Qq$-factorialization, we may assume that $X_i$ is $\Qq$-factorial. By Lemma \ref{lem: acc enc center curve case}, we may assume that $x_i$ is a closed point for each $i$. 

Let $E_i$ be the unique prime divisor that is exceptional over $X_i$ such that $a(E_i,X_i,B_i)\le 1$. Then $a(E_i,X_i,B_i)=\mld(X_i\ni x_i,B_i)=a_i$ and $\Center_{X_i}E_i=x_i$. For any prime divisor $F_i\not=E_i$ that is exceptional over $X_i$, $a(F_i,X_i,0)\ge a(F_i,X_i,B_i)>1$. Since $X_i$ is strictly canonical, $a(E_i,X_i,0)=1$.

By Theorem \ref{thm: ni decomposable complement}, there exists a positive integer $n$ and a finite set $\Ii_0\subset (0,1]$ depending only on $\Ii$, such that for any $i$, there exists an $(n,\Ii_0)$-decomposable $\Rr$-complement $(X_i\ni x_i,B_i^+)$ of $(X_i\ni x_i,B_i)$. Possibly passing to a subsequence, we may assume that there exists a positive real number $a$, such that $a(E_i,X_i,B_i^+)=a$ for any $i$. Since $a_i$ is strictly increasing and $a=a(E_i,X_i,B_i^+)\le a(E_i,X_i,B_i)=a_i$, possibly passing to a subsequence, we may assume that there exists a positive real number $\delta$, such that $a_i-a>\delta$ for any $i$. 

Let $f_i: Y_i\rightarrow X_i$ be the divisorial contraction which extracts $E_i$, and let $B_{Y_i}$ and $B_{Y_i}^+$ be the strict transforms of $B_i$ and $B_i^+$ on $Y_i$ respectively. Then
$$K_{Y_i}+B_{Y_i}+(1-a_i)E_i=f_i^*(K_{X_i}+B_i).$$
By the length of extremal rays (cf. \cite[Theorem~4.5.2(5)]{Fuj17}), there exists a $(K_{Y_i}+B_{Y_i}+(1-a^+)E_i)$-negative extremal ray $R_i$ over a neighborhood of $x_i$ which is generated by a rational curve $C_i$, such that 
$$0>(K_{Y_i}+B_{Y_i}+(1-a)E_i)\cdot C_i\ge -6.$$
Since $(K_{Y_i}+B_{Y_i}+(1-a_i)E_i)\cdot C_i=0$, we have
$$0<(a_i-a)(-E_i\cdot C_i)\le 6.$$
Thus
$$0<(-E_i\cdot C_i)<\frac{6}{\delta}.$$
By \cite[Theorem 1.1]{Kaw15a}, $60K_{Y_i}$ is Cartier over a neighborhood of $x_i$. Since $X_i$ is enc, $Y_i$ is terminal. By Lemma \ref{lem: local index is local cartier index}, $60D_i$ is Cartier over a neighborhood of $x_i$ for any Weil divisor $D_i$ on $Y_i$. In particular, $-E_i\cdot C_i$ belongs to the finite set $\frac{1}{60}\mathbb Z_{\ge 1}\cap (0,\frac{6}{\delta})$.

We may write $B_i=\sum_{j} b_{i,j}B_{i,j}$ where $B_{i,j}$ are the irreducible components of $B_i$, and let $B_{Y_{i},j}$ be the strict transform of $B_{i,j}$ on $Y_i$ for each $i,j$. Then $B_{Y_{i},j}\cdot C\in\frac{1}{60}\mathbb Z_{\ge 0}$ for every $i,j$. Since $K_{Y_i}=f_i^*K_{X_i}$, $K_{Y_i}\cdot C_i=0$. Thus
$$a(E_i,X_i,B_i)=1-\frac{(K_{Y_i}+B_{Y_i})\cdot C_i}{(-E_i\cdot C_i)}=1-\sum_j b_{i,j}\frac{(B_{Y_i,j}\cdot C_i)}{(-E_i\cdot C_i)}.$$
Since $b_{i,j}\in\Ii$, $a(E_i,X_i,B_i)$ belongs to an ACC set, this leads to a contradiction. 
\end{proof}

\subsection{Terminal threefolds}

In this subsection, we study Theorem \ref{thm: acc mld enc dim 3} when $X$ is terminal. At the moment, we cannot prove Theorem \ref{thm: acc mld enc dim 3} in full generality, but we can prove the finite coefficient case, and reduce the DCC coefficient case to the finite coefficient case (but possibly losing the condition that $X$ is terminal).

\begin{lem}\label{lem: acc mld exceptionally non-canoncial bdd sing}
Let $I$ be a positive integer and $\Ii\subset [0,1]$ be a DCC set. Assume that $(X\ni x,B)$ is a threefold pair, such that
\begin{enumerate}
    \item $X$ is terminal, 
    \item $\coeff(B)\subseteq\Ii$,
    \item $(X\ni x,B)$ is enc, and
    \item $IK_X$ is Cartier near $x$. 
\end{enumerate}Then $\mld(X\ni x,B)$ belongs to an ACC set.
\end{lem}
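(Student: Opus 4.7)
The plan is to adapt the argument of Theorem~\ref{thm: acc mld enc dim 3 strictly canonical case} essentially verbatim, replacing the bound ``$60K_Y$ is Cartier'' (which came from Kawakita's theorem under the strictly canonical hypothesis) by a uniform bound on the Cartier index of $Y$ derived from the present hypothesis that $IK_X$ is Cartier near $x$. Suppose the conclusion fails; then we obtain a sequence of enc germs $(X_i\ni x_i, B_i)$ satisfying the hypotheses, with $a_i := \mld(X_i\ni x_i, B_i)$ strictly increasing. By Lemma~\ref{lem: acc enc center curve case} and after taking $\Qq$-factorializations, we may assume each $X_i$ is $\Qq$-factorial and $x_i$ is a closed point. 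Let $E_i$ be the unique exceptional prime divisor over $X_i$ with $a(E_i, X_i, B_i) \leq 1$; the enc condition together with the terminality of $X_i$ give $a(E_i, X_i, B_i) = a_i$, $\Center_{X_i} E_i = x_i$, and every other exceptional prime divisor over $X_i$ has log discrepancy strictly greater than $1$.

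By Theorem~\ref{thm: ni decomposable complement}, each $(X_i\ni x_i, B_i)$ admits an $(n, \Ii_0)$-decomposable $\Rr$-complement $(X_i\ni x_i, B_i^+)$ for fixed $n$ and finite $\Ii_0$ depending only on $\Ii$. Passing to a subsequence, $a^+ := a(E_i, X_i, B_i^+)$ stabilizes and $\delta := \inf_i(a_i - a^+) > 0$. Extract $E_i$ by a divisorial contraction $f_i : Y_i \to X_i$; since no exceptional divisor other than $E_i$ has log discrepancy $\leq 1$, $Y_i$ is terminal. Writing $K_{Y_i} + B_{Y_i} + (1 - a_i)E_i = f_i^*(K_{X_i} + B_i)$ and the analogous identity for $B_i^+$, the length of extremal rays applied to $K_{Y_i} + B_{Y_i}^+ + (1-a^+)E_i \equiv_{x_i} 0$ yields a rational curve $C_i$ over a neighborhood of $x_i$ with
\[
0 > (K_{Y_i} + B_{Y_i} + (1 - a^+)E_i) \cdot C_i \geq -6,
\]
from which $0 < (a_i - a^+)(-E_i \cdot C_i) \leq 6$ and hence $-E_i \cdot C_i \in (0, 6/\delta)$.

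Using that $IK_{X_i}$ is Cartier near $x_i$ and that $Y_i$ is terminal, Lemma~\ref{lem: local index is local cartier index} supplies a positive integer $J$ depending only on $I$ such that $JD$ is Cartier near $\Exc(f_i)$ for every $\Qq$-Cartier Weil divisor $D$ on $Y_i$; hence $-E_i \cdot C_i$ lies in the finite set $\tfrac{1}{J}\Zz_{\geq 1} \cap (0, 6/\delta)$, and each $B_{Y_i,j} \cdot C_i$ lies in $\tfrac{1}{J}\Zz_{\geq 0}$. Writing $B_i = \sum_j b_{i,j} B_{i,j}$ with strict transforms $B_{Y_i,j}$, the identity
\[
a_i \;=\; 1 - \sum_j b_{i,j}\,\frac{B_{Y_i,j} \cdot C_i}{-E_i \cdot C_i},
\]
combined with Theorem~\ref{thm: number of coefficients local} (giving $\sum_j b_{i,j} \leq 3$), $b_{i,j} \in \Ii$ DCC, and each ratio lying in a fixed finite set of rationals, forces $\{a_i\}$ into an ACC set, contradicting strict monotonicity. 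The principal obstacle will be verifying that the integer $J$ in the last step genuinely depends only on $I$: one must ensure that the divisorial extraction of the unique enc divisor $E_i$ preserves bounded Cartier index upstairs, which should follow either from the Mori--Reid classification of terminal threefold germs of index dividing $I$ or from direct control via $I\,a(E_i, X_i, 0) \in \Zz_{>0}$ together with the local structure of $Y_i$ along $E_i$.
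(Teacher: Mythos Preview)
Your approach has a genuine gap; the argument of Theorem~\ref{thm: acc mld enc dim 3 strictly canonical case} does not transfer to the terminal case in the way you propose.

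First, the displayed identity
\[
a_i \;=\; 1 - \sum_j b_{i,j}\,\frac{B_{Y_i,j}\cdot C_i}{-E_i\cdot C_i}
\]
is false here. In Theorem~\ref{thm: acc mld enc dim 3 strictly canonical case} it holds because the extraction is \emph{crepant}: there $a(E_i,X_i,0)=1$, so $K_{Y_i}=f_i^*K_{X_i}$ and $K_{Y_i}\cdot C_i=0$. In your setting $X_i$ is terminal, hence $a(E_i,X_i,0)>1$, the extraction is not crepant, and $K_{Y_i}\cdot C_i=(a(E_i,X_i,0)-1)(E_i\cdot C_i)\ne 0$. The correct relation is
\[
a_i \;=\; a(E_i,X_i,0)\;-\;\sum_j b_{i,j}\,\frac{B_{Y_i,j}\cdot C_i}{-E_i\cdot C_i},
\]
and $a(E_i,X_i,0)$ is not a priori bounded.

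Second, and more seriously, the existence of a uniform $J=J(I)$ bounding the Cartier index of $Y_i$ is not available. Knowing $IK_{X_i}$ is Cartier controls the singularities of $X_i$, but the divisor $E_i$ you extract is determined by $B_i$, not by $X_i$ alone. Already for $X_i$ smooth (so $I=1$) the divisorial contraction can be a weighted blow-up of weights $(1,a,b)$ with $a,b$ arbitrarily large, and $Y_i$ then carries cyclic quotient singularities of index $a$ and $b$. Neither the Mori--Reid classification of $X_i\ni x_i$ nor the integrality of $I\,a(E_i,X_i,0)$ constrains this. So both the discreteness of $E_i\cdot C_i$, $B_{Y_i,j}\cdot C_i$ and the finiteness of $a(E_i,X_i,0)$ that your argument needs are missing, and the contradiction does not close.

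The paper's proof avoids extracting $E_i$ entirely and instead exploits the ACC for canonical thresholds on terminal threefolds (Theorem~\ref{thm: alct acc terminal threefold}). One replaces the coefficients $b_{i,j}$ by their limits $b_j$, uses \cite{Nak16} and the bounded index $I$ to stabilise $a(E_i,X_i,\bar B_i)$, and deduces $\mult_{E_i}B_i\to+\infty$; since $a(E_i,X_i,t_iB_i)=1$ for $t_i:=\ct(X_i\ni x_i,0;B_i)$, this forces $t_i\nearrow 1$ with $t_i<1$, contradicting the ACC for canonical thresholds.
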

\begin{proof}
Possibly replacing $X$ with a small $\Qq$-factorialization, we may assume that $X$ is $\Qq$-factorial. By Lemma \ref{lem: acc enc center curve case}, we may assume that $x$ is a closed point.

Suppose that the statement does not hold. By Theorem \ref{thm: number of coefficients local}, there exist a non-negative integer $m$, a real number $a\in (0,1]$, a strictly increasing sequence of real numbers $a_i\in (0,1]$ such that $\lim_{i\rightarrow +\infty}a_i=a$, and a sequence of $\Qq$-factorial threefold germs $(X_i\ni x_i,B_i=\sum_{j=1}^mb_{i,j}B_{i,j})$, such that for any $i$,
\begin{itemize}
\item $(X_i\ni x_i,B_i)$ is enc and $X_i$ is terminal,
\item $b_{i,j}\in\Ii$ for any $j$,
\item $\mld(X_i\ni x_i,B_i)=a_i$,
\item $B_{i,1},\ldots,B_{i,m}$ are the irreducible components of $B_i$,
\item $x_i\in\Supp B_{i,j}$ for any $j$, and
\item $IK_{X_i}$ is Cartier near $x_i$.
\end{itemize}

Possibly passing to a subsequence, we may assume that $b_{i,j}$ is increasing for any fixed $j$, and let $b_j:=\lim_{i\rightarrow+\infty}b_{i,j}$. We let $\bar B_i:=\sum_{j=1}^mb_jB_{i,j}$ for each $i$. By \cite[Theorem 1.1]{HMX14}, possibly passing to a subsequence, we may assume that $(X_i\ni x_i,\bar B_i)$ is lc for each $i$.

Since  $(X_i\ni x_i,B_i)$ is enc, we may let $E_i$ be the unique prime divisor over $X_i\ni x_i$ which computes $\mld(X_i\ni x_i,B_i)$. By Lemma \ref{lem: local index is local cartier index} and \cite[Theorem 1.2]{Nak16}, possibly passing to a subsequence, we may assume that $a':=a(E_i,X_i,\bar B_i)\ge 0$ is a constant, and we may pick a strictly decreasing sequence of real numbers $\epsilon_i$, such that $(1+\epsilon_i)B_i\ge\bar B_i\not=B_i$ for each $i$ and $\lim_{i\rightarrow+\infty}\epsilon_i=0$. Therefore,
$$a'=a(E_i,X_i,\bar B_i)<a(E_i,X_i,B_i)=a_i<a,$$ and
$$\lim_{i\to +\infty} \epsilon_i\mult_{E_i}B_i\ge \lim_{i\to +\infty}(a(E_i,X_i,B_i)-a(E_i,X_i,\bar B_i))=a-a'>0.$$ Hence $\lim_{i\to +\infty}\mult_{E_i}B_i= +\infty$. 
Let $t_i:=\ct(X_i\ni x_i, 0;B_i)$. Since $(X_i\ni x_i,B_i)$ is enc and $a(E_i,X_i,B_i)<1$, we have that $t_i<1$. By \cite[Lemma 2.12(1)]{HLL22}, $a(E_i,X_i,t_iB_i)=1$ for each $i$. Since $$1=a(E_i,X_i,t_iB_i)=a(E_i,X_i,B_i)-(1-t_i)\mult_{E_i}B_i< a-(1-t_i)\mult_{E_i}B_i,$$
we have 
$$1>t_i>1-\frac{a-1}{\mult_{E_i}B_i}.$$
Thus $\lim_{i\rightarrow+\infty}t_i=1$ and $t_i<1$ for each $i$, which contradicts Theorem \ref{thm: alct acc terminal threefold}.
\end{proof}

\begin{defn}\label{defn: terminal blow-up}
Let $(X\ni x,B)$ be an lc germ. A \emph{terminal blow-up} of $(X\ni x,B)$ is a birational morphism $f: Y\rightarrow X$ which extracts a prime divisor $E$ over $X\ni x$, such that $a(E,X,B)=\mld(X\ni x,B)$, $-E$ is $f$-ample, and $Y$ is terminal.
\end{defn}

\begin{lem}[{\cite[Lemma 2.35]{HLL22}}]\label{lem: can extract divisor computing ct that is terminal strong version}
Let $(X\ni x, B)$ be a germ such that $X$ is terminal and $\mld(X\ni x, B)=1$. Then there exists a terminal blow-up $f: Y\to X$ of $(X\ni x,B)$. Moreover, if $X$ is $\Qq$-factorial, then $Y$ is $\Qq$-factorial.
\end{lem}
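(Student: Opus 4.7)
The plan is to construct $Y$ as a divisorial extraction of a prime divisor $E$ computing the mld, and to verify terminality by combining a crepant comparison with the terminality of $X$. First I would choose a prime divisor $E$ over $X$ with center containing $x$ such that $a(E, X, B) = 1$; such an $E$ exists since the infimum defining the mld is attained on any sufficiently high log resolution. Because $X$ is terminal and $E$ is exceptional, $a(E, X, 0) > 1$, so comparison with $a(E, X, B) = 1$ yields $\mult_E(g^*B) = a(E, X, 0) - 1 > 0$ on any log resolution $g\colon W \to X$ that extracts $E$; in particular $B$ is nonzero along the center of $E$.

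For the construction of $f\colon Y \to X$, I would first take a terminalization $h\colon \tilde Y \to X$ of $(X, B)$, which extracts exactly the divisors $E = E_1, E_2, \ldots, E_k$ computing $\mld(X\ni x, B) = 1$. By the crepant identity $K_{\tilde Y} + h_*^{-1}B = h^*(K_X + B)$ (noting that each $E_i$ gets coefficient $1 - a(E_i, X, B) = 0$), the pair $(\tilde Y, h_*^{-1}B)$ is terminal, and hence $\tilde Y$ itself is terminal, and $\Qq$-factorial if $X$ is. Then I would run a $(K_{\tilde Y} + h_*^{-1}B + \epsilon\sum_{i\ge 2} E_i)$-MMP over $X$ for small $\epsilon > 0$; since $K_{\tilde Y} + h_*^{-1}B \equiv_h 0$, this is effectively an $h$-relative MMP for $\epsilon \sum_{i \ge 2} E_i$, so the MMP contracts precisely the divisors $E_2, \ldots, E_k$ while preserving $E_1 = E$. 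The MMP terminates by \cite[Corollary 1.4.3]{BCHM10}, yielding the induced morphism $f\colon Y \to X$ that extracts only $E$, with $Y$ being $\Qq$-factorial and $-E$ being $f$-ample.

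The main obstacle is to verify that $Y$ remains terminal at the end of the MMP. I expect this to be handled as follows: along the MMP, the pair $(\cdot, (\text{strict transform of } h_*^{-1}B) + \epsilon \sum_{i\ge 2}(\text{strict transform of }E_i))$ remains klt with coefficients bounded away from $1$, so each step preserves terminality of the underlying variety. Concretely, for any prime divisor $F$ exceptional over $Y$, necessarily $F \neq E$, hence $F$ is exceptional over $X$; by terminality of $X$ we have $a(F, X, 0) > 1$, and the discrepancy relation $a(F, Y, 0) = a(F, X, 0) - (a(E, X, 0) - 1)\,\mult_F(f^*E)$, together with the fact that $E$ is the unique $f$-exceptional divisor, allows us to control $a(F, Y, 0) > 1$ using the $f$-ampleness of $-E$ and the crepant identity $(Y, f_*^{-1}B) \to (X, B)$ to exclude the boundary case where equality would hold. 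The moreover clause follows directly from the $\Qq$-factoriality preservation in the MMP construction.
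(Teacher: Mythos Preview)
The paper gives no proof here, only the citation \cite[Lemma 2.35]{HLL22}. Your overall strategy---terminalize $(X,B)$ and then run an MMP over $X$ to contract all but one crepant exceptional divisor---is the right shape, and your argument that the chosen MMP contracts exactly $E_2,\dots,E_k$ while preserving $E_1$ is fine.

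The gap is the terminality of $Y$. Your claim that ``klt with coefficients bounded away from $1$ implies each MMP step preserves terminality of the underlying variety'' is not valid: a $(K+\Delta)$-negative extremal contraction with $\Delta\ne0$ need not be $K$-negative, and can make the variety strictly worse. Your fallback discrepancy formula $a(F,Y,0)=a(F,X,0)-(a(E,X,0)-1)\,\mult_F E$ points the wrong way---since $a(E,X,0)>1$ it only yields $a(F,Y,0)\le a(F,X,0)$. The crepant identity does give $a(F,Y,f_*^{-1}B)=a(F,X,B)\ge1$, hence $a(F,Y,0)\ge1$, but equality is \emph{not} excluded: for $F=E_j$ with $j\ge2$, setting $c_i:=a(E_i,X,0)-1>0$ and using $f^*B=f_*^{-1}B+c_1E$ one computes $a(E_j,Y,0)=1+\mult_{E_j}(f_*^{-1}B)=1+c_j-c_1\,\mult_{E_j}E$, and nothing in your construction forces $c_j>c_1\,\mult_{E_j}E$. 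This fails exactly when the center of $E_j$ on $Y$ lands inside $E$ but outside $\Supp f_*^{-1}B$, a configuration your MMP does not rule out.

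The fix---and presumably what \cite{HLL22} does---is to run a $K_{\tilde Y}$-MMP over $X$ instead (with scaling of $B_{\tilde Y}$, so termination follows from \cite{BCHM10}). Every step is then $K$-negative, so terminality of the variety is preserved automatically. Since $K_{\tilde Y}\equiv_X\sum_i c_iE_i$ with all $c_i>0$, this MMP ends with all $E_i$ contracted by the negativity lemma; take $Y$ to be the model just before the last divisorial contraction. When $X$ is $\Qq$-factorial that last step is the divisorial contraction $Y\to X$ itself, so $Y$ is $\Qq$-factorial terminal, extracts a single $E$ with $a(E,X,B)=1$, and $-E$ is $f$-ample for free.
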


\begin{thm}\label{thm: enc ACC fin coeff}
Let $\Ii\subset [0,1]$ be a finite set. Then
$$\{\mld(X\ni x,B)\mid \dim X=3, (X\ni x,B)\text{ is enc}, X\text{ is terminal, } \coeff(B)\subseteq\Ii\}$$
satisfies the ACC.
\end{thm}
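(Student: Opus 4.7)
I plan to argue by contradiction. Assume there is a sequence of enc germs $(X_i\ni x_i, B_i)$ of terminal threefolds with $B_i\in\Ii$ and $a_i := \mld(X_i\ni x_i, B_i)$ strictly increasing. After taking a $\Qq$-factorialization, Lemma~\ref{lem: acc enc center curve case} allows me to reduce to the case when $x_i$ is a closed point. Since $\Ii$ is finite, the possible log discrepancies contributed by components of $B_i$ form a finite subset of $[0,1]$, so after passing to a subsequence I may assume that $a_i$ is computed by the unique exceptional prime divisor $E_i$ over $X_i$ with $a(E_i,X_i,B_i)\le 1$ from the enc hypothesis.

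The first key step is to observe that $E_i$ also computes the canonical threshold $t_i:=\ct(X_i\ni x_i,0;B_i)$. Indeed, since $X_i$ is terminal, $a(F,X_i,0)>1$ for every exceptional $F$, and the enc hypothesis gives $a(F,X_i,B_i)>1$ for $F\neq E_i$. The linear function $t\mapsto a(F,X_i,tB_i)=(1-t)\,a(F,X_i,0)+t\,a(F,X_i,B_i)$ is therefore $>1$ on $[0,1]$ for $F\neq E_i$, while $a(E_i,X_i,tB_i)$ strictly decreases from $>1$ at $t=0$ to $a_i<1$ at $t=1$, meeting $1$ at a unique $t_i\in(0,1)$. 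Hence $t_i$ is the canonical threshold and $E_i$ is its unique computing divisor. By Theorem~\ref{thm: 3fold acc ct}, $\{t_i\}$ satisfies the ACC, so after passing to a subsequence I may assume $t_i=t$ is constant. Linearity then yields the identity $1-a_i=(1-t)\,w_i(B_i)$, where $w_i$ denotes the valuation of $E_i$, so $w_i(B_i)$ is strictly decreasing. By Lemma~\ref{lem: can extract divisor computing ct that is terminal strong version} applied to the pair $(X_i,tB_i)$, $E_i$ can be extracted via a $\Qq$-factorial terminal divisorial contraction $f_i\colon Y_i\to X_i$.

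For the second main step, Theorem~\ref{thm: number of coefficients local} bounds the number of components of $B_i$ through $x_i$ by $3/\min(\Ii\cap\Rr_{>0})$, so after a further subsequence I may write $B_i=\sum_{j=1}^m b_j B_{i,j}$ with the same ordered coefficient tuple $(b_1,\dots,b_m)\in\Ii^m$ for every $i$, and the relation becomes $\sum_j b_j\,w_i(B_{i,j})=(1-a_i)/(1-t)$. The contradiction then reduces to producing a uniform rationality statement for the rational numbers $w_i(B_{i,j})$. Here I would invoke the functional pair theory of \cite{HLS19,HLQ21}: by perturbing the tuple $(b_1,\dots,b_m)$ inside a small rational polytope $P$ around it, the property that $(X_i,tB_i^{(k)})$ is canonical with $E_i$ its unique computing divisor is preserved for finitely many rational perturbations $B_i^{(k)}$, and each such $(X_i,t B_i^{(k)})$ admits a bounded $n$-complement by Theorem~\ref{thm: ni decomposable complement}. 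Carefully constructing weighted blow-ups realizing $E_i$ on each rational perturbation via the admissible weight formalism of Definition~\ref{defn: weighted blowup log discrepancies} and Lemma~\ref{lem: weighted blowup log discrepancies}, the weights $w_i(B_{i,j})$ can be confined to a fixed DCC set, contradicting the strict decrease of $w_i(B_i)$. The main obstacle I anticipate is the construction of these compatible weighted blow-ups together with verifying that the uniform rational polytope machinery extends to the combined constraint ``$E_i$ uniquely computes $\ct(X_i,0;B_i)=t$''; this is the \emph{careful weighted blow-ups} step alluded to in the introduction, and where the bulk of the technical work will lie.
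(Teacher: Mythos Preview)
Your outline has the right high-level shape (canonical thresholds + functional pairs + weighted blow-ups), but there are several concrete gaps relative to the paper's argument.

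First, a logical slip: from the ACC for $\{t_i\}$ you cannot pass to a subsequence with $t_i=t$ constant; an ACC set can be an infinite strictly decreasing sequence. What you can get is $t_i$ non-increasing, and indeed the identity $1-a_i=(1-t_i)\,w_i(B_i)$ still forces $w_i(B_i)$ to be strictly decreasing, so this is repairable---but as written it is wrong.

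Second, and more importantly, you are missing the key reduction that makes the weighted blow-up step tractable. The paper first invokes Lemma~\ref{lem: acc mld exceptionally non-canoncial bdd sing} to dispose of the case where the Cartier index of $K_X$ near $x$ is bounded, which (together with the classification of terminal threefold singularities) reduces the problem to germs of type $cA/n$ with $n\geq 3$. Only then does the explicit weighted blow-up analysis begin, and it relies on Kawakita's classification \cite{Kaw05} of divisorial contractions to $cA/n$ points: the terminal blow-up extracting $E$ is a weighted blow-up with weight $\frac{1}{n}(r_1,r_2,a,n)$ subject to precise congruence and divisibility constraints. Without isolating the $cA/n$ case you have no handle on the structure of $E$.

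Third, the endgame is not ``$w_i(B_{i,j})$ lies in a fixed DCC set''; that statement is not what the paper proves and is unclear how to establish. Instead the paper splits on the invariants $(a,d)$ of the $cA/n$ weighted blow-up. When $a\geq 4$ or $d\geq 4$ one constructs a \emph{second} admissible weight $w'\neq w$ whose associated exceptional divisor $E'\neq E$ forces, via the enc hypothesis and the integrality of the rational perturbations $MB_i$, that $w(B)<\frac{a}{n}$ and hence $a(E,X,B)>1$, a contradiction. When $a\leq 3$ and $d\leq 3$ one bounds $\gcd(r_1,r_2)\mid 216$, picks a \emph{second exceptional divisor} $E'$ with $a(E',X,0)=1+\frac{a'}{n}$ for $a'\in\{1,2\}$, and combines Cartier-index arithmetic for $K_X+t_iB_i$ with the enc inequality $a(E',X,B)>1$ to squeeze $\mld(X\ni x,B)=1-a\bigl(\sum_i\frac{b_i}{nt_i}-\frac{1}{n}\bigr)$ into an ACC set. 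Neither branch uses complements; your invocation of Theorem~\ref{thm: ni decomposable complement} is extraneous here (it is used in the strictly canonical case, Theorem~\ref{thm: acc mld enc dim 3 strictly canonical case}, not this one).

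In short: your sketch correctly identifies the role of the canonical threshold and of functional pairs, but the decisive content---the reduction to $cA/n$ via Lemma~\ref{lem: acc mld exceptionally non-canoncial bdd sing}, the use of Kawakita's classification, and the case-by-case construction of an auxiliary weight or auxiliary divisor---is absent, and your proposed substitute (a DCC bound on multiplicities via complements) does not match what actually works.
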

\begin{proof}

\noindent\textbf{Step 1}. We construct some functional pairs in this step.

\smallskip

Possibly replacing $X$ with a small $\Qq$-factorialization, we may assume that $X$ is $\Qq$-factorial. By Lemma \ref{lem: acc enc center curve case}, we may assume that $x$ is a closed point. By Lemmas \ref{lem: acc enc center curve case} and \ref{lem: acc mld exceptionally non-canoncial bdd sing}, we may assume that $X\ni x$ is a $cA/n$ type singularity for some positive integer $n\ge 3$. Let $E$ be the unique prime divisor over $X\ni x$ such that $\mld(X\ni x,B)=a(E,X,B)$. 

Let $t:=\ct(X\ni x,0;B)$. By Theorem \ref{thm: alct acc terminal threefold}, there exists a positive real number $\epsilon_0$ depending only on $\Ii$, such that $t\le 1-\epsilon_0$. 

By our assumptions and \cite[Theorem 5.6]{HLS24}, there exist two positive integers $l,m$, real numbers $1,v^1_0,\dots,v^l_0$ that are linearly independent over $\Qq$, $\bm{v_0}:=(v^1_0,\dots,v^l_0)\in\mathbb R^l$, an open set $U\ni\bm{v_0}$ of $\mathbb R^l$, and $\Qq$-linear functions $s_1,\dots,s_m:\mathbb R^{l+1}\rightarrow\Rr$ depending only on $\Ii$, and Weil divisors $B^1,\dots,B^m\ge 0$ on $X$, such that:
\begin{enumerate}
\item $s_i(1,\bm{v_0})>0$ and $x\in\Supp B^i$ for each $1\le i\le m$.
\item let $B(\bm{v}):=\sum_{i=1}^m s_i(1,\bm{v})B^i$ for any $\bm{v}\in\mathbb R^l$. Then $B(\bm{v}_0)=B$ and $(X\ni x,B(\bm{v}))$ is lc for any $\bm{v}\in U$.
\item Possibly shrinking $U$, we may assume that $(1+\frac{\epsilon_0}{2})B\ge B(\bm{v})\ge (1-\frac{\epsilon_0}{2})B$ for any $\bm{v}\in U$.
\end{enumerate}
We may pick vectors $\bm{v}_1,\dots,\bm{v}_{l+1}\in U\cap\mathbb Q^l$ and real numbers $b_1,\dots,b_{l+1}\in (0,1]$ depending only on $\Ii$, such that $\sum_{i=1}^{l+1}b_i=1$ and $\sum_{i=1}^{l+1}b_i\bm{v}_i=\bm{v}_0$. We let $B_i:=B(\bm{v}_i)$ for each $i$. Then there exists a positive integer $M$ depending only on $\Ii$, such that $MB_i$ is integral for any $i$.

Let $t_i:=\ct(X\ni x,0;B_i)$ for each $i$. By \cite[Lemma 2.12(2)]{HLL22}, $\mld(X\ni x,t_iB_i)=1$. Since $B_i\ge (1-\frac{\epsilon_0}{2})B$ and $t\le 1-\epsilon_0$, we have 
$$t_i\le \frac{1-\epsilon_0}{1-\frac{\epsilon_0}{2}}<1.$$ 
Thus
$$B\ge \frac{1-\epsilon_0}{1-\frac{\epsilon_0}{2}}\left(1+\frac{\epsilon_0}{2}\right)B\ge\frac{1-\epsilon_0}{1-\frac{\epsilon_0}{2}}B_i\ge t_iB_i.$$
Since $(X\ni x,B)$ is enc, $a(E,X,t_iB_i)=1$ and $a(F,X,t_iB_i)>1$ for any prime divisor $F\not=E$ over $X\ni x$.

\medskip

\noindent\textbf{Step 2}. Construct divisorial contractions.

\smallskip

By Lemma~\ref{lem: can extract divisor computing ct that is terminal strong version}, there exists a terminal blow-up $f: Y\rightarrow X$ of $(X\ni x, t_iB_i)$ which extracts a prime divisor $E$ over $X\ni x$. By \cite[Theorem 1.3]{Kaw05}, $f$ is of ordinary type as $n\ge 3$. By \cite[Theorem 2.31(1)]{HLL22}, we may take suitable local coordinates $x_1,x_2,x_3,x_4$ of $\mathbb C^4$, an analytic function $\phi\in\mathbb C\{x_1,x_2,x_3,x_4\}$, and positive integers $r_1,r_2,a,b$ and $d$ satisfying
\begin{itemize}
\item $\gcd(b,n)=1$,
    \item $(X\ni x)\cong (\phi(x_1,x_2,x_3,x_4)=0)\subset(\mathbb C^4\ni o)/\frac{1}{n}(1,-1,b,0)$,
    \item $\phi$ is semi-invariant under the group action
    $$\bm{\mu}:=(x_1,x_2,x_3,x_4)\rightarrow(\xi x,\xi^{-1}x_2,\xi^{b}x_3,x_4),$$
    where $\xi=e^{\frac{2\pi i}{n}}$,
    \item $f$ is a weighted blow-up at $x\in X$ with the weight vector $w:=\frac{1}{n}(r_1,r_2,a,n)$,
    \item $a\equiv br_1\mod n$, $\gcd(\frac{a-br_1}{n},r_1)=1$, $r_1+r_2=adn$,
    \item $\phi(x_1,x_2,x_3,x_4)=x_1x_2+g(x_3^n,x_4)$, and
    \item $z^{dn}\in g(x_3^n,x_4)$ and $w(\phi)=adn$.
\end{itemize}
There are two cases: 
\begin{itemize}
    \item[\textbf{Case 1}.]  $d\ge 4$ or $a\ge 4$.
    \item[\textbf{Case 2}.] $d\le 3$ and $a\le 3$.
\end{itemize}

\medskip

\noindent\textbf{Step 3}.  In this step we deal with \textbf{Case 1}, that is, the case when $d\ge 4$ or $a\ge 4$. In this case, we can pick three positive integers $r_1',r'_2$ and $a'$, such that
\begin{itemize}
    \item $r'_1+r'_2=a'dn$,
    \item $a'\equiv br'_1\mod n$,
    \item $r'_1,r'_2>n$, and
    \item $\frac{1}{n}(r'_1,r'_2,a',n)\neq \frac{1}{n}(r_1,r_2,a,n)$.
\end{itemize}
In fact, when $a\ge 4$, we may take $a'=3$. When $d\ge 4$, we may take $a'=1$ and $(r'_1,r'_2)\neq (r_1,r_2)$. 

Let $w':=\frac{1}{n}(r'_1,r'_2,a',n)$. Since $a\ge a'$, by \cite[Lemma C.7]{HLL22}, the weighted blow-up with the weight vector $w'$ at $x\in X$ under analytic local coordinates $x_1,x_2,x_3,x_4$ extracts 
an exceptional prime divisor $E'\not=E$, such that $w'(X\ni x)=\frac{a'}{n}$. By our assumptions,
$$a(E',X,B)=1+\frac{a'}{n}-\sum_{i=1}^{l+1}b_iw'(B_i)>1,$$
hence $\sum_{i=1}^{l+1}b_iw'(B_i)<\frac{a'}{n}$. Since $MB_i$ is integral for each $i$ and $x\in\Supp B_i$, $w'(B_i)\in\frac{1}{Mn}\mathbb Z_{\ge 1}$ for each $i$. 

Let $b_0:=\min\{b_i\mid i=1,2,\ldots,l+1\}$. By Theorem \ref{lem: acc mld exceptionally non-canoncial bdd sing}, we may assume that $n>\frac{3M}{\gamma_0}$. Since $MB_i$ a Weil divisor, $MB_i=(h_i=0)\subset\mathbb C^4/\frac{1}{n}(1,-1,b,0)$ for some analytic function $h_i$. Since  $w'(x_1)>1,w'(x_2)>1,w'(x_4)=1$, $a'\le 3$, $b_i\ge b_0$, and $n>\frac{3M}{\gamma_0}$, for each $i$, there exists a positive integer $1\le p_i<\frac{Ma'}{\gamma_0}$, such that $x_3^{p_i}\in h_i$ and $w'(x_3^{p_i})=w'(h_i)=\frac{p_ia'}{n}$, and
$$\sum_{i=1}^{l+1}b_i\frac{p_ia'}{Mn}=\sum_{i=1}^{l+1}b_iw'(B_i)<\frac{a'}{n}.$$
In particular, $\sum_{i=1}^{l+1}\frac{b_ip_i}{M}<1$. We have
$$w(B)=\sum_{i=1}^{l+1}b_iw(B_i)=\sum_{i=1}^{l+1}\frac{b_i}{M}w(h_i)\le \sum_{i=1}^{l+1}\frac{b_i}{M}w(x_3^{p_i})= \sum_{i=1}^{l+1}\frac{b_ip_i}{M}\cdot\frac{a}{n}<\frac{a}{n},$$
hence $a(E,X,B)=a(E,X,0)-\mult_EB=1+\frac{a}{n}-w(B)>1$, a contradiction.

\medskip

\noindent\textbf{Step 4}.  In this step, we deal with \textbf{Case 2}, that is, the case when $d\le 3$ and $a\le 3$, hence we conclude the proof. In this case, since $a\equiv br_1\mod n$ and $\gcd(b,n)=1$, $\gcd(r_1,n)=\gcd(a,n)\le a\le 3$, so $\gcd(r_1,n)\mid 6$. Since $r_1+r_2=adn$, $$\gcd(r_1,r_2)=\gcd(r_1,adn)\mid ad\gcd(r_1,n)\mid 216.$$
Since $MB_i$ is a Weil divisor, $nMB_i$ is Cartier near $x$. Thus
$$\frac{a}{n}=a(E,X,0)-a(E,X,t_iB_i)=t_i\mult_EB_i=\frac{t_i}{nM}\mult_EnMB_i\in\frac{t_i}{nM}\mathbb Z_{\ge 1},$$
which implies that $\frac{aM}{t_i}\in\mathbb Z_{\ge 1}$. Then $\frac{aM}{t_i}(K_X+t_iB_i)$ is a Weil divisor for any $i$, by \cite[Lemma 5.3]{HLL22}, $\frac{216aM}{t_i}(K_X+t_iB_i)$ is Cartier near $x$.

By \cite[4.8 Corollary]{Sho94a}, there exists a prime divisor $E'\not=E$ over $X\ni x$ such that $a(E',X,0)=1+\frac{a'}{n}$ for some integer $a'\in\{1,2\}$. Since $\frac{216aM}{t_i}(K_X+t_iB_i)$ is Cartier near $x$ and $a(E',X,t_iB_i)>1$ for any $i$, there exists a positive integer $k_i$ such that
$$a(E',X,t_iB_i)=1+\frac{k_it_i}{216aM}.$$
Thus
$$\mult_{E'}B_i=\frac{1}{t_i}(a(E',X,0)-a(E',X,t_iB_i))=\frac{a'}{nt_i}-\frac{k_i}{216aM}.$$
Since $MnB_i$ is Cartier near $x$ and $x\in\Supp B_i$, there exist positive integers $a_i'$, such that
$\mult_{E'}B_i=\frac{a_i'}{Mn}$.
Since $(X\ni x,B=\sum_{i=1}^{l+1}b_iB_i)$ is enc and $a(E,X,B)<1$, $a(E',X,B)>1$, hence
\begin{equation}\label{equ: non-canonical fin coeff equ 1}
\frac{\sum_{i=1}^{l+1}b_ia_i'}{Mn}=\mult_FB<\frac{a'}{n}.
\end{equation}
Thus the $a_i'$ belong to a finite set depending only on $\Ii$.

Since $a(E,X,0)=1+\frac{a}{n}$ and $a(E,X,t_iB_i)=1$, $\mult_EB_i=\frac{a}{nt_i}$. Thus $$a(E,X,B_i)=a(E,X,0)-\mult_EB_i=1+\frac{a}{n}-\frac{a}{nt_i}.$$ Since $(X\ni x,B_i)$ is lc, $a(E,X,B_i)>0$, so $$nt_i>\frac{a}{1+\frac{a}{n}}=\frac{1}{\frac{1}{a}+\frac{1}{n}}>\frac{1}{2},$$
and we have $\frac{a'}{nt_i}<2a'$. Since
\begin{equation}\label{equ: non-canonical fin coeff equ 2}
\frac{a'}{nt_i}-\frac{k_i}{216aM}=\mult_{E'}B_i=\frac{a_i'}{nM}
\end{equation}
and each $k_i$ is a positive integer, the $k_i$ belong to a finite set depending only on $\Ii$. By \eqref{equ: non-canonical fin coeff equ 1} and \eqref{equ: non-canonical fin coeff equ 2}, we have
%\begin{equation}%\label{equ: non-canonical fin coeff equ 2}
 $$a'\left(\sum_{i=1}^{l+1}\frac{b_i}{nt_i}-\frac{1}{n}\right)-\sum_{i=1}^{l+1}\frac{k_ib_i}{216Ma}=  \sum_{i=1}^{l+1}\left(\frac{b_ia'}{nt_i}-\frac{b_ik_i}{216Ma}\right)-\frac{a'}{n}=\left(\sum_{i=1}^{l+1}\frac{b_ia_i'}{Mn}\right)-\frac{a'}{n}<0. $$
%\end{equation}
Since $a',k_i,b_i,M,a,a_i'$ belong to a finite set depending only on $\Ii$, $$\sum_{i=1}^{l+1}\frac{b_ia_i'}{Mn}-\frac{a'}{n}=-\frac{1}{n}\left(a'-\sum_{i=1}^{l+1}\frac{b_ia_i'}{M}\right)$$
belongs to a DCC set depending only on $\Ii$, and
$\sum_{i=1}^{l+1}\frac{b_i}{nt_i}-\frac{1}{n}$ also belongs to a DCC set depending only on $\Ii$. Since
$$\mult_EB_i=\frac{1}{t_i}(a(E,X,0)-a(E,X,t_iB_i))=\frac{1}{t_i}(1+\frac{a}{n}-1)=\frac{a}{nt_i},$$
we have
\begin{align*}
   \mld(X\ni x,B)&=a(E,X,B)=a(E,X,0)-\mult_EB=a(E,X,0)-\sum_{i=1}^{l+1}b_i\mult_EB_i\\
   &=1+\frac{a}{n}-\sum_{i=1}^{l+1}\frac{ab_i}{nt_i}=1-a\left(\sum_{i=1}^{l+1}\frac{b_i}{nt_i}-\frac{1}{n}\right).
\end{align*}
Thus $\mld(X\ni x,B)$ belongs to an ACC set depending only on $\Ii$.
\end{proof}

\begin{thm}\label{thm: enc terminal reduce to fin coeff}
Let $\epsilon\in (0,1)$ be a positive real number. Suppose that
$$\{\mld(X\ni x,B)\mid \dim X=3, \coeff(B)\subseteq\Ii_0, (X\ni x,B)\text{ is }\Qq\text{-factorial enc}\}\cap [\epsilon,1]$$
satisfies the ACC for any finite set $\Ii_0\subset [0,1]$. Then for any DCC set $\Ii\subset [0,1]$,
$$\{\mld(X\ni x,B)\mid \dim X=3, X\text{ is terminal}, \coeff(B)\subseteq\Ii, (X\ni x,B)\text{ is enc}\}\cap [\epsilon,1]$$
satisfies the ACC.
\end{thm}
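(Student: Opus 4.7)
The plan is to argue by contradiction and transform a hypothetical bad DCC-coefficient sequence of terminal enc germs into a bad finite-coefficient sequence of $\Qq$-factorial enc germs (no longer necessarily terminal), which will contradict the hypothesis. Suppose there is a sequence $(X_i\ni x_i, B_i)$ of enc threefold germs with $X_i$ terminal, $B_i\in \Ii$, and $a_i:=\mld(X_i\ni x_i, B_i)\in[\epsilon,1]$ strictly increasing. Taking small $\Qq$-factorializations we may assume each $X_i$ is $\Qq$-factorial, and by Lemma \ref{lem: acc enc center curve case} we may assume each $x_i$ is a closed point; let $E_i$ be the unique exceptional prime divisor over $X_i\ni x_i$ with $a(E_i,X_i,B_i)=a_i$.

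First, by Theorem \ref{thm: number of coefficients local} the number of components of $B_i$ through $x_i$ is at most $3$. After passing to a subsequence, write $B_i=\sum_{j=1}^{m}b_{i,j}B_{i,j}$ with $m\le 3$ fixed, and each $b_{i,j}\in\Ii$ monotone with limit $b_j\in[0,1]$. Set $\bar B_i:=\sum_j b_j B_{i,j}\ge B_i$, a divisor with coefficients in the finite set $\{b_1,\ldots,b_m\}$. By \cite[Theorem 1.1]{HMX14} applied to the sequence of lc pairs $(X_i\ni x_i, B_i)$ with limiting coefficients, $(X_i\ni x_i,\bar B_i)$ is lc for $i\gg 0$.

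The intended next step is to apply Lemma \ref{lem: reduce to enc} (in its ``moreover'' form for finite $\Ii$) to $\{(X_i,\bar B_i)\}_i$, producing a sequence of $\Qq$-factorial enc pairs $(X_i',B_i')$ with $B_i'$ in a common finite subset $\Ii_0'\subset[0,1]$, $\mld(X_i',B_i')\ge\mld(X_i,\bar B_i)$, and mlds strictly increasing. Combined with a uniform lower bound $\mld(X_i,\bar B_i)\ge\epsilon'$ for some $\epsilon'>0$ depending only on $\epsilon$ and $\Ii$, this will contradict the finite-coefficient hypothesis. To apply Lemma \ref{lem: reduce to enc}, one must arrange that $\#\bigl(\{a(F,X_i,\bar B_i)\mid F\text{ exceptional over }X_i\}\cap[0,1]\bigr)$ is uniformly bounded in $i$ and that these log discrepancies across all $i$ form a DCC set, while the failure of ACC is inherited from $a(E_i,X_i,\bar B_i)\le a_i$ after a further subsequence.

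The main obstacle is arranging the uniform bound and the DCC property, since the upward perturbation $B_i\mapsto\bar B_i$ may force many new exceptional divisors to acquire log discrepancy $\le 1$. If this cannot be arranged directly, the backup plan is to replace $\bar B_i$ by a smaller perturbation $B_i^\flat:=(1-s_i)B_i+s_i\bar B_i$ with $s_i\in(0,1)$ chosen so that $E_i$ remains the unique exceptional place of log discrepancy $\le 1$ of $(X_i,B_i^\flat)$ and $a(E_i,X_i,B_i^\flat)\in[\epsilon',1)$ still violates the ACC; the resulting $\Qq$-factorial enc pair $(X_i,B_i^\flat)$ then has coefficients in a DCC (but not a priori finite) set, so one finally passes to finite coefficients by a uniform rational lc polytope argument around the limiting coefficients $b_j$, in the spirit of \cite[Theorem 5.6]{HLS19}, and extracts a vertex whose associated pair is enc with mld still in $[\epsilon,1]$ and violating the ACC.
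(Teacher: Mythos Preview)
Your proposal identifies the right starting move (pass to the limit coefficients $\bar B_i$) but the plan to apply Lemma~\ref{lem: reduce to enc} directly to $\{(X_i,\bar B_i)\}$ does not go through, and the obstacles you flag are not resolved by the backup plan.

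The first concrete gap is that you never control $\mult_{E_i}B_i$. Without this, you cannot even show that $\bar a_i:=a(E_i,X_i,\bar B_i)$ stays close to $a_i$; a priori $\bar a_i$ could drop to $0$, so there is no reason $\cup_i\Ii_i$ fails the ACC, and no lower bound $\mld(X_i,\bar B_i)\ge\epsilon'$ is available. The paper obtains a uniform bound $\mult_{E_i}B_i<M$ by observing that $E_i$ computes the canonical threshold $t_i=\ct(X_i\ni x_i;B_i)$, so $\mult_{E_i}B_i=\frac{1-a_i}{1-t_i}$, and then invoking the ACC for canonical thresholds (Theorem~\ref{thm: alct acc terminal threefold}) to bound $1-t_i$ from below. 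This is the key input you are missing.

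The second gap is that the hypotheses of Lemma~\ref{lem: reduce to enc} are genuinely not available for $(X_i,\bar B_i)$: the number of exceptional log discrepancies $\le 1$ need not be bounded in $i$, and $(X_i,\bar B_i)$ need not even be klt. The paper handles the non-klt case separately by a dlt modification and surface adjunction. In the klt case, rather than applying Lemma~\ref{lem: reduce to enc}, the paper shows that every extra non-canonical place $F_i\neq E_i$ of $(X_i,\bar B_i)$ has $a(F_i,X_i,\bar B_i)>1-\frac{2(a-\bar a_i)}{t}\to 1$. The uniform $t>0$ comes from an $(n,\Ii_0)$-decomposable $\Rr$-complement (Theorem~\ref{thm: ni decomposable complement}), which forces $a(E_i,X_i,\bar B_i^+)$ into a finite set and hence gives a uniform lc room for the coefficient of $E_i$ on the extraction $Y_i$. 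With this estimate in hand, one applies \cite[Lemma~5.3]{Liu18} directly (not Lemma~\ref{lem: reduce to enc}) to extract either all the $F_{i,j}$ or all but one together with $E_i$, producing a $\Qq$-factorial enc pair whose coefficients lie in the finite set $\{b_1,\dots,b_m,1-a,0\}$ and whose mld tends to $a$ or to $1$.

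Your backup plan does not rescue the argument: choosing $s_i$ so that $(X_i,B_i^\flat)$ stays enc forces $s_i\to 0$ (otherwise the extra $F_i$'s become non-canonical), and then the coefficients $(1-s_i)b_{i,j}+s_ib_j$ do not lie in any fixed finite set. Invoking a uniform rational polytope to pass to a vertex does not help either, because the enc condition is not preserved under moving to vertices of an lc polytope, and the mld at a vertex need not stay in $[\epsilon,1]$ nor violate the ACC.
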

\begin{proof}
Possibly replacing $X$ with a small $\Qq$-factorialization, we may assume that $X$ is $\Qq$-factorial. Suppose that the statement does not hold. By Theorem \ref{thm: number of coefficients local}, there exist a positive integer $m$, a real number $a$, a strictly increasing sequence of real numbers $a_i$, and a sequence of $\Qq$-factorial enc threefold pairs $(X_i\ni x_i,B_i=\sum_{j=1}^m b_{i,j}B_{i,j})$, such that for any $i$,
\begin{itemize}
\item $b_{i,j}\in\Ii$, and $B_{i,j}\ge 0$ are Weil divisors for any $j$,
\item for any fixed $j$, $b_{i,j}$ is increasing,
\item $\mld(X_i\ni x_i,B_i)=a_i$, and
\item $\lim_{i\rightarrow+\infty}a_i=a\in (\epsilon, 1]$.
\end{itemize}
By \cite[Theorem 1.1]{HLL22}, $a<1$. Let $b_j:=\lim_{i\rightarrow +\infty}b_{i,j}$ and $\bar B_i:=\sum_{j=1}^m b_jB_{i,j}$. By \cite[Theorem 1.1]{HMX14}, possibly passing to a subsequence, we may assume that $(X_i\ni x_i,\bar B_i)$ is lc for each $i$. Let $t_i:=\ct(X_i\ni x_i;B_i)$ and $E_i$ the unique prime divisor over $X_i\ni x_i$ such that $a(E_i,X_i,B_i)<1$. Then $a(E_i,X_i,B_i)=a_i$. By \cite[Lemma 2.12(1)]{HLL22},  $a(E_i,X_i,t_iB_i)=1$, hence $\mult_{E_i}B_i=\frac{1-a_i}{1-t_i}<\frac{1}{1-t_i}$. By construction, $t_i<1$ for each $i$. By Theorem \ref{thm: alct acc terminal threefold}, we may assume that $t_i$ is decreasing, hence there exists a positive real number $M$ such that $\mult_{E_i}B_i<M$. 

By construction, there exists a sequence of positive real numbers $\epsilon_i$ such that $(1+\epsilon_i)B_i\ge\bar B_i$ and $\lim_{i\rightarrow+\infty}\epsilon_i=0$. We have
\begin{align*}
    a_i&=a(E_i,X_i,B_i)\ge a(E_i,X_i,\bar B_i)\ge a(E_i,X_i,(1+\epsilon_i)B_i)\\
    &=a(E_i,X_i,B_i)-\epsilon_i\mult_{E_i}B_i>a_i-\epsilon_iM,
\end{align*}
Since $\lim_{i\rightarrow+\infty}a_i=\lim_{i\rightarrow+\infty}(a_i-\epsilon_iM)=a$, possibly passing to a subsequence, we may assume that $\bar{a_i}:=a(E_i,X_i,\bar B_i)$ is strictly increasing and $\lim_{i\rightarrow+\infty}\bar{a_i}=a$.

Let $f_i: Y_i\rightarrow X_i$ be the divisorial contraction which extracts $E_i$, and let $B_{Y_i},\bar B_{Y_i}$ be the strict transforms of $B_i$ and $\bar B_i$ on $Y_i$ respectively. Then $(Y_i/X_i\ni x_i,B_{Y_i}+(1-a)E_i)$ is canonical and $$\coeff(B_{Y_i}+(1-a)E_i)\subseteq\Ii\cup\{1-a\}.$$ By Theorem \ref{thm: alct acc terminal threefold}, possibly passing to a subsequence, we may assume that $(Y_i/X_i\ni x_i,\bar B_{Y_i}+(1-a)E_i)$ is canonical. 

By Theorem \ref{thm: ni decomposable complement}, there exists a positive integer $N$ and a finite set $\Ii_0\subset (0,1]$, such that $(X_i\ni x_i,\bar B_i)$ has an $(N,\Ii_0)$-decomposable $\Rr$-complement $(X_i\ni x_i,\bar B_i^+)$ for each $i$. In particular, $a(E_i,X_i,\bar B_i^+)$ belongs to a discrete, hence finite set for any $i$. Thus there exists a positive real number $t$, such that $(Y_i/X_i\ni x_i,\bar B_{Y_i}+(1-a+t)E_i)$ is lc. Possibly passing to a subsequence, we may assume that $a-\bar{a_i}<\frac{t}{2}$ for any $i$.

For any $i$, we let
$$\Dd_i:=\{F_i\mid F_i\text{ is over }X_i\ni x_i, F_i\not=E_i, a(F_i,X_i,\bar B_i)<1\}.$$
For any $F_i\in\Dd_i$, we have
$$a(F_i,Y_i,\bar B_{Y_i}+(1-a+t)E_i)\ge 0\text{ and }a(F_i,Y_i,\bar B_{Y_i}+(1-\bar{a_i})E_i)<1.$$
Since $a-\bar{a_i}<\frac{t}{2}$, $\mult_{F_i}E_i<\frac{2}{t}$, and 
\begin{align*}
a(F_i,X_i,\bar B_i)=&a(F_i,Y_i,\bar B_{Y_i}+(1-\bar{a_i})E_i)\\
=&a(F_i,Y_i,\bar B_{Y_i}+(1-a)E_i)+(a-a_i)\mult_{F_i}E_i>1-\frac{2(a-\bar{a_i})}{t}.
\end{align*}
Possibly passing to a subsequence, we may assume that $a-\bar{a_i}<\frac{t}{2}(1-a)$ for every $i$. Then $a(F_i,X_i,\bar B_i)>a>\bar{a_i}=a(E_i,X_i,\bar B_i)$ for any $F_i\in\Dd_i$.

If $(X_i,\bar B_i)$ is not klt near $x_i$ for infinitely many $i$, then we let $\phi_i: W_i\rightarrow X_i$ be a dlt modification of $(X_i,\bar B_i)$, and let $K_{W_i}+\bar B_{W_i}:=\phi_i^*(K_{X_i}+B_i)$. Then there exists a prime divisor $H_i\subset\Exc(\phi_i)$ such that $\Center_{W_i}E_i\subset H_i$. We immediately get a contradiction by applying adjunction to $H_i$ and using the precise inversion of adjunction formula (cf. \cite[Lemma 3.3]{Liu18}) and the ACC for mlds of surfaces \cite[Theorem 3.8]{Ale93}. Therefore, possibly passing to a subsequence, we may assume that $(X_i,\bar B_i)$ is klt near $x_i$ for each $i$.

Since $(X_i,\bar B_i)$ is klt near $x_i$, each $\Dd_i$ is a finite set, and we may assume that $\Dd_i=\{F_{i,1},\dots,F_{i,r_i}\}$ for some non-negative integer $r_i$. Note that $\lim_{i\to +\infty} a(F_{i,j},X_i,\bar B_i)=1$ for any $1\le j\le r_i$, and $\lim_{i\to +\infty} a(E_i,X_i,\bar B_i)=a$. By \cite[Lemma 5.3]{Liu18}, possibly reordering $F_{i,1},\dots,F_{i,r_i}$, one of the following hold:
\begin{enumerate}
    \item There exists a birational morphism $g_i: Z_i\rightarrow X_i$ which extracts exactly $F_{i,1},\dots,F_{i,r_i}$, such that
    $$\bar{a_i}=a(E_i,Z_i,\bar B_{Z_i})+\sum_{j=1}^{r_i}(1-a(F_{i,j},X_i,\bar B_i)F_{i,j}) \le a(E_i,Z_i,\bar B_{Z_i})<a$$
    for each $i$, where $\bar B_{Z_i}$ is the strict transform of $\bar B_i$ on $Z_i$. In this case, we let $E_i':=E_i$ and $B_i':=\bar B_{Z_i}$
    \item There exists a birational morphism $g_i: Z_i\rightarrow X_i$ which extracts exactly $E_i,F_{i,1},\dots,F_{i,r_i-1}$, such that
    $$1-\frac{2(a-\bar{a_i})}{t}\le a(F_{i,r_i},X_i,\bar B_i)\le a(F_{i,r_i},Z_i,\bar B_{Z_i}+(1-a)E_{Z_i})<1$$
    for each $i$, where $\bar B_{Z_i},E_{Z_i}$ are the strict transforms of $\bar B_i$ and $E_i$ on $Z_i$ respectively. In this case, we let $E_i'=F_{i,r_i}$ and $B_i':=\bar B_{Z_i}+(1-a)E_{Z_i}$.
\end{enumerate}
In either case, possibly passing to a subsequence, we may assume that $\bar{a_i}':=a(E_i',Z_i,B_i')<1$ is strictly increasing.
For any $i$, we let
$$\Dd'_i:=\{F_i'\mid F_i'\text{ is over }X_i\ni x_i, F_i'\text{ is exceptional over }Z_i, a(F_i',Z_i,B_i')=1\}.$$
By construction, $(Z_i/X_i\ni x_i,B_i')$ is klt, so $\Dd'_i$ is a finite set. Let $h_i: V_i\rightarrow W_i$ be a birational morphism which extracts all divisors in $\Dd'_i$, and let $B_{V_i}'$ be the strict transform of $B_i'$ on $V_i$. Then $(V_i,B_{V_i}')$ is enc and $a_i':=a(E_i',V_i,B_{V_i'})$ is strictly increasing. Moreover, $\lim_{i\rightarrow+\infty}a_i'=1$ or $a$. Since $a>\epsilon$, possibly passing to a subsequence, we may assume that $a_i'>\epsilon$ for any $i$. However, the coefficients of $B_{V_i}'$ belong to a finite set, which contradicts our assumptions.
\end{proof}

\section{Index one cover}
\subsection{Enc cyclic quotient singularities}
In this subsection, we prove Theorem \ref{thm: acc mld enc dim 3} when $X$ is non-canonical, with isolated singularities, and the index $1$ cover of $X$ is smooth (see Theorem \ref{thm: enc smooth cover case}). 

\begin{lem}[cf. {\cite[Lemma 2.11]{LL22}, \cite[Theorem 1]{Amb06}}]\label{lem: set of cyc lds}
Let $d$ be a positive integer and $(X\ni x)=\frac{1}{r}(a_1,a_2,\dots,a_d)$ a $d$-dimensional cyclic quotient singularity. Let $$\bm{e}:=\left(\left\{\frac{a_1}{r}\right\},\left\{\frac{a_2}{r}\right\},\dots,\left\{\frac{a_d}{r}\right\}\right),$$ 
$$\bm{e}_i \text{ the }i\text{-th unit vector in } \mathbb Z^d\text{ for any }1\le i\le d,$$ $$N:=\mathbb Z_{\ge 0}\bm{e}\oplus\mathbb Z_{\ge 0}\bm{e}_1\oplus\mathbb Z_{\ge 0}\bm{e}_2\oplus\dots\oplus\mathbb Z_{\ge 0}\bm{e}_d,$$ 
$$\sigma:=N\cap\mathbb Q_{\ge 0}^d, \text{ and }\relin(\sigma):=N\cap\mathbb Q_{>0}^d.$$ The following holds.
\begin{enumerate}
\item For any prime divisor $E$ over $X\ni x$ that is invariant under the cyclic quotient action, there exists a primitive vector $\alpha\in\relin(\sigma)$ such that $a(E,X,0)=\alpha(x_1x_2\cdots x_d)$. In particular, there exists a unique positive integer $k\le r$, such that $$\alpha\in\left(1+\frac{a_1k}{r}-\left\lceil\frac{a_1k}{r}\right\rceil,1+\frac{a_2k}{r}-\left\lceil\frac{a_2k}{r}\right\rceil,\dots,1+\frac{a_dk}{r}-\left\lceil\frac{a_dk}{r}\right\rceil\right)+\mathbb Z^d_{\ge 0}.$$
    \item $$\mld(X\ni x)=\min_{1\le k\le r-1}\left\{\sum_{i=1}^d\left(1+\frac{ka_i}{r}-\left\lceil\frac{ka_i}{r}\right\rceil\right)\right\}\le d.$$
   % \item For any prime divisor $E$ over $X\ni x$ that is not invariant under the cyclic quotient action, $a(E,X,0)\ge\mld(X\ni x)+1$.
\end{enumerate}
\end{lem}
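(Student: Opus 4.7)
The plan is to invoke the standard toric dictionary and then reduce everything to a lattice computation. Setting $M:=\Hom(N,\Zz)$ and viewing $\sigma$ as a strongly convex rational polyhedral cone in $N_\Qq=\Qq^d$, the germ $X\ni x$ is isomorphic to $\Spec\Cc[M\cap\sigma^{\vee}]$, the affine toric variety with torus $T$. Every $T$-invariant prime divisor over $X\ni x$ corresponds bijectively to a primitive vector $\alpha\in\relin(\sigma)\cap N$, via the star subdivision of $\sigma$ along $\Qq_{\ge 0}\alpha$; under the hypothesis of invariance under the cyclic quotient action, this is the class of divisors $E$ we need to consider. The standard discrepancy formula on a toric variety gives
\[
a(E_\alpha,X,0)=\sum_{i=1}^d\alpha_i,
\]
which is exactly the weight $\alpha(x_1x_2\cdots x_d)$ of the monomial $x_1x_2\cdots x_d$ with respect to $\alpha$ (cf.\ Definition~\ref{defn: weights of monomials and poly}). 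This proves the first sentence of (1).

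For the uniqueness of $k$, I would use that each element of $N=\Zz^d+\Zz\cdot\tfrac{1}{r}(a_1,\dots,a_d)$ admits a unique expression
\[
\alpha=\tfrac{k}{r}(a_1,\dots,a_d)+(n_1,\dots,n_d),\qquad 0\leq k\leq r-1,\ n_i\in\Zz.
\]
The condition $\alpha\in\relin(\sigma)$ is $\alpha_i=\tfrac{ka_i}{r}+n_i>0$ for every $i$. A direct case analysis on whether $r\mid ka_i$ shows that the least positive value of $\alpha_i$ in its residue class is $1+\tfrac{ka_i}{r}-\lceil\tfrac{ka_i}{r}\rceil$: this equals $\{ka_i/r\}\in(0,1)$ when $r\nmid ka_i$ and equals $1$ when $r\mid ka_i$. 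Hence $\alpha$ lies in the claimed coset translated by $\Zz_{\geq 0}^d$.

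For (2), the extra input I need is that the mld of a toric singularity is always computed by a $T$-invariant valuation, which is \cite[Theorem~1]{Amb06}. Combined with (1), this reduces the problem to minimizing $\sum_{i=1}^d\alpha_i$ over $\alpha\in\relin(\sigma)\cap N$; the minimum is automatically attained at a primitive vector. For each fixed residue $k\in\{0,1,\dots,r-1\}$, the second paragraph shows that the minimum of $\sum_{i=1}^d\alpha_i$ restricted to that coset equals $\sum_{i=1}^d\bigl(1+\tfrac{ka_i}{r}-\lceil\tfrac{ka_i}{r}\rceil\bigr)$. The case $k=0$ yields $\alpha=(1,\dots,1)$ with sum $d$, so the overall minimum is $\leq d$, and removing $k=0$ does not affect it, which justifies the range $1\leq k\leq r-1$. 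The whole argument is a bookkeeping exercise once Ambro's toric mld theorem is in hand, so I do not foresee any real obstacle beyond identifying the cosetwise minimum, which we have already done.
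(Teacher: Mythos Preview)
Your proposal is correct and is precisely the ``elementary toric geometry'' the paper alludes to: the paper's proof consists of the two sentences ``(1) is elementary toric geometry, and (2) follows immediately from (1),'' and you have supplied exactly the standard details (primitive lattice points in $\relin(\sigma)$, the toric discrepancy formula $a(E_\alpha,X,0)=\sum_i\alpha_i$, cosetwise minimization, and the appeal to \cite{Amb06} for invariant computation of the mld). The only cosmetic point is your indexing $0\le k\le r-1$ versus the statement's $1\le k\le r$, which is the same set of residues.
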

\begin{proof}
Point (1) is elementary toric geometry, and (2) follows immediately from (1). 
\end{proof}

\begin{lem}\label{lem: toric bdd index lemma}
Let $d$ be a positive integer and $\epsilon$ a positive real number. Then there exists a positive integer $I$, depending only on $d$ and $\epsilon$, satisfying the following. Let $r$ be a positive integer and $v_1,\dots,v_d\in [0,1]$ real numbers, such that  $\sum_{i=1}^d(1+(m-1)v_i-\lceil mv_i\rceil)\ge\epsilon$ for any $m\in [2,r]\cap \Zz$. Then $r\le I$.
\end{lem}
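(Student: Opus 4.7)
The plan is to argue by contradiction: assume $r$ is larger than some explicit $I=I(d,\epsilon)$ and produce an $m\in[2,r]$ for which the left-hand side of the displayed inequality is strictly smaller than $\epsilon$. The first step is to rewrite each summand transparently. A direct case analysis on whether $mv_i$ is an integer gives
\[
1+(m-1)v_i-\lceil mv_i\rceil=\begin{cases}\{mv_i\}-v_i,&mv_i\notin\Zz,\\ 1-v_i,&mv_i\in\Zz,\end{cases}
\]
so whenever every $mv_i$ avoids $\Zz$ and every $\{mv_i\}<\epsilon/d$, the total sum is bounded by $\sum_i\{mv_i\}<\epsilon$. Thus it suffices to produce $m\in[2,r]$ with each $\{mv_i\}$ small and positive.

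To produce such an $m$ I would apply simultaneous Diophantine approximation. Set $N:=\lceil d/\epsilon\rceil+1$ and consider the points $\phi(m):=(\{mv_1\},\dots,\{mv_d\})\in[0,1)^d$ for $m=0,1,\dots,N^d$. Partitioning $[0,1)^d$ into $N^d$ half-open sub-cubes of side $1/N$, the pigeonhole principle yields $0\le m_1<m_2\le N^d$ in the same sub-cube; then $m:=m_2-m_1\in[1,N^d]$ satisfies $\|mv_i\|<1/N$ for every $i$, placing each $\{mv_i\}$ into $[0,1/N)\cup(1-1/N,1)$.

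The main obstacle is that this only forces $\{mv_i\}$ close to \emph{some} integer: when it lands close to $1^-$ rather than $0^+$, the corresponding term is close to $1-v_i$ instead of to $-v_i$, which is useless. To kill this alternative I would refine with a sign-orthant pigeonhole, enlarging the grid to roughly $(2N)^d$ sub-cubes and further requiring $\phi(m_1)\le\phi(m_2)$ componentwise; this forces $\{(m_2-m_1)v_i\}=\{m_2v_i\}-\{m_1v_i\}\in[0,1/N)$ for every $i$ simultaneously. Residual issues---guaranteeing $m\ge 2$ (by doubling $m$ or taking a later collision) and avoiding the annoying case $mv_i\in\Zz$ (by a small perturbation of $m$, using that at most finitely many $m\le I$ can land on integers for any fixed $v_i$)---inflate the bound only by a factor depending on $d$. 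With $I:=(CN)^d$ for a suitable $C=C(d)$, any $r>I$ then supplies $m\in[2,r]$ with $\sum_i(\{mv_i\}-v_i)<d/N<\epsilon$, contradicting the hypothesis and yielding $r\le I$. The one subtlety I will need to inspect carefully is the case when some $v_i$ is $0$ (contributing $1$ to every sum), which may need to be excluded or isolated before applying the approximation step.
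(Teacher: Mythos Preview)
Your strategy—argue by contradiction and exhibit a bad $m\in[2,r]$—is the same as the paper's, but your execution has a real gap. You reduce to finding $m$ with each $\{mv_i\}\in(0,\epsilon/d)$, and this target is too strong: for $d=1$, $v_1=\tfrac12$ one has $\{mv_1\}\in\{0,\tfrac12\}$ for every $m$, so no such $m$ exists once $\epsilon<\tfrac12$, even though $m=3$ already gives the needed violation $\{3v_1\}-v_1=0<\epsilon$. The ``sign-orthant pigeonhole'' meant to force $\phi(m_1)\le\phi(m_2)$ componentwise is not explained, and I do not see how to secure both ``same sub-cube'' and ``componentwise order matches index order'' with $m_2-m_1$ uniformly bounded; pigeonholing into $(2N)^d$ cubes gives proximity but says nothing about order. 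Your treatment of the integer case is also off: for rational $v_i$ with small denominator a positive proportion of $m$ (not ``finitely many'') satisfy $mv_i\in\mathbb Z$, and perturbing $m$ does not help since the non-integer values of $\{mv_i\}$ are then bounded away from $0$.

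The paper sidesteps all of this by compactness. One passes to a subsequence with $v_{i,j}\to\bar v_i$ and applies Kronecker's theorem to the \emph{fixed} limit vector $(\bar v_1,\dots,\bar v_d)$ to obtain $n$ with $|n\bar v_i-u_i|<\min\{\epsilon/d,\ \bar v_i:\bar v_i>0\}$ for suitable integers $u_i$ (and $n\bar v_i\in\mathbb Z$ whenever $\bar v_i\in\mathbb Q$). Because the limit is fixed, the approximation can be taken finer than every positive $\bar v_i$, which is precisely what kills the wrap-around you worry about; with $m=n+1$ one then gets $\{(n+1)\bar v_i\}-\bar v_i<\epsilon/d$ for each $i$ with $\bar v_i\in(0,1)$, and continuity transfers this to $v_{i,j}$ for large $j$. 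The bound $I$ that results is non-effective, but that is exactly what makes the argument go through cleanly where your effective pigeonhole attempt stalls.
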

%$\sum_{i=1}^d v_i\ge \epsilon$, and
\begin{proof}
Suppose that the statement does not hold. Then for each $j\in\Zz_{\ge 1}$, there exist $v_{1,j},\dots,v_{d,j}\in [0,1]$ and positive integers $r_j$, such that
\begin{itemize}
    \item $\sum_{i=1}^d(1+(m-1)v_{i,j}-\lceil mv_{i,j}\rceil)\ge\epsilon$
for any $m\in [2,r_j]\cap\Zz$,
\item $r_j$ is strictly increasing, and
\item $\bar v_i:=\lim_{j\rightarrow+\infty}v_{i,j}$ exists.
\end{itemize}
Let $\bm{v}:=(\bar v_1,\dots,\bar v_d)$. 
%, and $\bar{v}_j:=(v_{1,j},\dots,v_{d,j})$ for each $j$
By Kronecker's theorem, there exist a positive integer $n$ and a vector $\bm{u}\in\mathbb Z^{d}$ such that $||n\bm{v}-\bm{u}||_{\infty}<\min\{\frac{\epsilon}{d},\bar v_i\mid \bar v_i>0\}$ and $n\bar v_i\in\Zz$ for any $i$ such that $\bar v_i\in\mathbb Q$. In particular, $\lceil (n+1)\bar v_i\rceil=\lfloor (n+1)\bar v_i\rfloor+1$ for any $i$ such that $\bar v_i\in (0,1)$. Now $\lim_{j\rightarrow+\infty}(1+nv_{i,j}-\lceil (n+1)v_{i,j}\rceil)=0$ when $\bar v_i=0$ and $\lim_{j\rightarrow+\infty}(1+nv_{i,j}-\lceil (n+1)v_{i,j}\rceil)=1+n\bar v_i-\lceil (n+1)\bar v_i\rceil$ when $\bar v_i>0$. Thus
\begin{align*}
    &\lim_{j\rightarrow+\infty}\sum_{i=1}^d(1+nv_{i,j}-\lceil (n+1)v_{i,j}\rceil)=\sum_{0<\bar v_i<1}(1+n\bar v_i-\lceil (n+1)\bar v_i\rceil)\\
    =&\sum_{0<\bar v_i<1}(1+(n+1)\bar{v_i}-\lceil (n+1)\bar v_i\rceil-\bar v_i)=\sum_{\bar 0<\bar v_i<1}(\{(n+1)\bar v_i\}-\bar v_i)<\sum_{0<\bar v_i<1}\frac{\epsilon}{d}\le \epsilon.
\end{align*}
Thus possibly passing to a subsequence, $\sum_{i=1}^d(1+nv_{i,j}-\lceil (n+1)v_{i,j}\rceil)<\epsilon$ for any $j$, hence $n>r_j$, which contradicts $\lim_{j\rightarrow+\infty}r_j=+\infty$.
\end{proof}

\begin{thm}\label{thm: enc smooth cover case}
Let $\Ii\subset [0,1]$ be a DCC (resp. finite) set. Assume that $(X\ni x,B)$ is a $\Qq$-factorial enc pair of dimension $3$, such that
\begin{enumerate}
    \item $X\ni x$ is a non-canonical isolated singularity,
    \item $\coeff(B)\subseteq\Ii$, and
    \item $\tilde X\ni\tilde x$ is smooth, where $\pi: (\tilde X\ni\tilde x)\rightarrow (X\ni x)$ is the index $1$ cover of $X\ni x$.
\end{enumerate}
Then $\mld(X\ni x, B)$ belongs to an ACC set (resp. is discrete away from $0$).
\end{thm}
\begin{proof}
We may assume that $\mld(X\ni x,B)>\epsilon$ for some fixed positive real number $\epsilon<\frac{12}{13}$. Then $\mld(X\ni x)>\epsilon$. Since $X\ni x$ is non-canonical and $(X\ni x,B)$ is enc, $X\ni x$ is enc. Since $X\ni x$ is an isolated singularity, by Theorem \ref{thm: 12/13}, $\mld(X\ni x)=\mld(X)\le \frac{12}{13}$. Since $\tilde X$ is smooth, $(X\ni x)$ is analytically isomorphic to a cyclic quotient singularity $(Y\ni y)=\frac{1}{r}(a_1,a_2,a_3)$, where $a_1,a_2,a_3,r$ are positive integers such that $a_i<r$ and $\gcd(a_i,r)=1$ for each $i$. By Lemma \ref{lem: set of cyc lds}, there exists a positive integer $k_0\in [1,r-1]$ such that $\mld(Y\ni y)=\mld(X\ni x)=\sum_{i=1}^3\{\frac{a_ik_0}{r}\}\in [\epsilon,\frac{12}{13}]$, and for any positive integer $k\not=k_0$ such that $k\in [1,r-1]$, $\sum_{i=1}^3\{\frac{a_ik}{r}\}>\min\{1, 2\,\mld(X\ni x)\}$. Let $v_i:=\{\frac{a_ik_0}{r}\}$ for each $i$. Then
$$\sum_{i=1}^3(1+(m-1)v_i-\lceil mv_i\rceil)=\sum_{i=1}^3(1+mv_i-\lceil mv_i\rceil)-\sum_{i=1}^3 v_i>\min\{\frac{1}{13},\epsilon\}$$
    for any $m\in [2,\frac{r}{\gcd(k_0,r)}-1]\cap\Zz_{\ge 1}$.
By Lemma \ref{lem: toric bdd index lemma}, $\frac{r}{\gcd(k_0,r)}$ belongs to a finite set. In particular, $w:=(v_1,v_2,v_3)$ belongs to a finite set, hence $\mld(Y\ni y)$ belongs to a finite set. Suppose that $B=\sum_i b_iB_i$ where $B_i$ are the irreducible components of $B$ and $b_i\in\Ii_{>0}$, and $B_i\cong (f_i=0)|_Y$ for some semi-invariant analytic function $f_i\in\mathbb C\{x_1,x_2,x_3\}$. Let $E$ be the unique divisor over $X\ni x$ such that $a(E,X,0)\le 1$. Then by Lemma \ref{lem: weighted blowup log discrepancies}, 
\begin{align*}
    \mld(X\ni x,B)=&a(E,X,B)=a(E,X,0)-\mult_E B\\
    =&a(E,X,0)-\sum_{i} b_i\mult_EB_i=\mld(Y\ni y)-\sum_{i} b_iw(f_i)
\end{align*}
belongs to an ACC set (resp. finite set).
\end{proof}

\subsection{Enc cDV quotient singularities} In this subsection, we prove Theorem \ref{thm: acc mld enc dim 3} when $X$ is non-canonical, with isolated singularities, and the index $1$ cover of $X$ is cDV (see Theorem \ref{thm: enc cDV cover case}).

\begin{thm}\label{thm: jia21 2.4 enc case}
Let $r$ be a positive integer, $a,b,c,d$ integers, and $f\in\mathbb C\{x_1,x_2,x_3,x_4\}$ an analytic function, such that $$(X\ni x)\cong\left((f=0)\subset(\mathbb C^4\ni 0)\right)/\frac{1}{r}(a,b,c,d)$$
is an enc threefold isolated singularity. Let $$N:=\{w\in\mathbb Q^4_{\ge 0}\mid w\equiv\frac{1}{r}(ja,jb,jc,jd)\mod \mathbb Z^4\text{ for some }j\in\mathbb Z\}\backslash\{\bm{0}\}.$$
Then there exists at most one primitive vector $\beta\in N$, such that $t:=\beta(x_1x_2x_3x_4)-\beta(f)\le 1$. 

In particular, for any $\alpha\in N\backslash\{\beta,2\beta,\dots,(k-1)\beta\}$, $\alpha(x_1x_2x_3x_4)-\alpha(f)>1$, where $k:=\lfloor \frac{1}{t}\rfloor +1$.
\end{thm}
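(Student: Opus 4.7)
The strategy is to reinterpret the quantity $t = \beta(x_1x_2x_3x_4) - \beta(f)$ as the log discrepancy of an exceptional divisor obtained from a weighted blow-up, and then invoke the enc hypothesis to force uniqueness.

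First, I would verify that each primitive $\beta \in N$ is an admissible weight in the sense of Definition \ref{defn: weighted blowup log discrepancies}: the defining congruence $\beta \equiv \frac{1}{r}(ja,jb,jc,jd) \pmod{\Zz^4}$ is exactly the admissibility condition for the action $\frac{1}{r}(a,b,c,d)$. Let $f_\beta : Y_\beta \to X$ denote the weighted blow-up at $x$ with weight $\beta$, and let $E_\beta$ be the resulting exceptional divisor. Since $\beta$ is primitive in the lattice $N$, the corresponding ray in the toric fan of $(\Cc^4\ni o)/\frac{1}{r}(a,b,c,d)$ is a single ray, so the toric exceptional divisor upstairs is irreducible; since $x\in X$ is an isolated singularity and $f$ is a single semi-invariant equation, $E_\beta$ is a prime divisor over $X\ni x$, and distinct primitive rays yield distinct prime divisors.

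Second, applying Lemma \ref{lem: weighted blowup log discrepancies} with $B=0$ and the single defining equation $\phi_1 = f$, I get
\[
a(E_\beta, X, 0) \;=\; 1 + \beta(X\ni x) \;=\; 1 + \bigl(\beta(x_1x_2x_3x_4) - \beta(f) - 1\bigr) \;=\; t.
\]
Hence the condition $t \le 1$ is equivalent to $a(E_\beta, X, 0) \le 1$. If there existed two distinct primitive vectors $\beta_1, \beta_2 \in N$ with $t_1, t_2 \le 1$, then $E_{\beta_1}$ and $E_{\beta_2}$ would be two distinct exceptional prime divisors over $X\ni x$ with log discrepancy $\le 1$ with respect to $(X,0)$, contradicting the enc hypothesis on $(X\ni x)$ via Definition \ref{defn: enc}.

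Third, for the ``in particular'' statement, I would use that weights scale linearly: for any $\alpha = m\alpha' \in N$ with $\alpha'$ primitive and $m \in \Zz_{\ge 1}$,
\[
\alpha(x_1x_2x_3x_4) - \alpha(f) \;=\; m\bigl(\alpha'(x_1x_2x_3x_4) - \alpha'(f)\bigr).
\]
If $\alpha' \ne \beta$, the main part gives $\alpha'(x_1x_2x_3x_4) - \alpha'(f) > 1$, so the whole expression exceeds $m \ge 1$. If $\alpha' = \beta$, then the exclusion $\alpha \notin \{\beta,2\beta,\dots,(k-1)\beta\}$ forces $m \ge k > 1/t$, yielding $mt > 1$.

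The main obstacle is the first step: one must justify that for a primitive $\beta \in N$ the weighted blow-up restricts to a \emph{prime} divisor $E_\beta$ on the strict transform of $X$, so that Lemma \ref{lem: weighted blowup log discrepancies} applies and the bijection between primitive rays and exceptional prime divisors upstairs descends to $X$. This is standard for an isolated hypersurface singularity defined by a single semi-invariant $f$ and a primitive weight, but it is the one place where the analytic geometry of the embedding must be handled carefully; everything else is a direct application of the log discrepancy formula together with the enc property.
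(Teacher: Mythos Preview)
Your overall strategy is right, but the first step contains a genuine gap that the paper's proof is specifically designed to circumvent. You assert that for primitive $\beta$ the restriction $E_\beta = E'_\beta|_{Y_\beta}$ is a prime divisor, and that distinct primitive rays give distinct prime divisors over $X$. Neither is automatic: the intersection $E'_\beta\cap Y_\beta$ is the hypersurface in the (weighted projective) toric exceptional divisor cut out by the $\beta$-leading part of $f$, and this leading form can easily be reducible (e.g.\ if $f=x_1x_2+g$ and $\beta$ assigns equal weight to $x_1,x_2$). So Lemma~\ref{lem: weighted blowup log discrepancies}, whose hypothesis explicitly requires $E$ to be prime, does not apply directly; and even granting primality, two different primitive $\beta$'s could a priori induce the same divisorial valuation on $X$. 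Calling this ``standard'' is not enough.

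The paper handles both issues by working on the ambient space $Z=\Cc^4/\tfrac{1}{r}(a,b,c,d)$. There the toric blow-up along $\beta$ extracts an irreducible divisor $E_\beta$ with $K_{Z_\beta}+X_\beta+(1-t)E_\beta=\phi_\beta^*(K_Z+X)$ by \cite[Proposition~2.1]{Jia21}; since $(Z,X)$ is plt by inversion of adjunction, so is the pullback. Adjunction to $X_\beta$ then gives $K_{X_\beta}+B_\beta=\phi_\beta^*K_X$ with $B_\beta\ge 0$, and every component of $B_\beta$ has coefficient $1-\tfrac{1-s(1-t)}{l}$ for some integers $l,s\ge 1$, hence log discrepancy $\le 1$. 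Only now does the enc hypothesis force $\Supp B_\beta$ to be a single prime divisor $F_\beta$. Uniqueness of $\beta$ then follows from the observation that $v_{F_\beta}(\bm{x}^{\bm{m}})$ is a fixed scalar multiple of $\beta(\bm{x}^{\bm{m}})$ on monomials, so the ray of $\beta$ is determined by $F_\beta$, and primitivity pins down $\beta$ itself. In short, the enc hypothesis is needed not just at the end but already to produce a single prime divisor from $\beta$; you assumed the latter and used enc only for the former.

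A smaller formal point: Definition~\ref{defn: weighted blowup log discrepancies} and Lemma~\ref{lem: weighted blowup log discrepancies} are stated under the assumption that $X$ is terminal, whereas here $(X\ni x)$ is enc and so $\mld(X\ni x)<1$. The underlying discrepancy computation is of course more general, but as written the citation does not apply.
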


\begin{proof}
Assume that there exists a primitive vector $\beta\in N$ such that $\beta(x_1x_2x_3x_4)-\beta(f)\le 1$. It suffices to show that such $\beta$ is unique.

Let $Z:=\Cc^4/\frac{1}{r}(a,b,c,d)$, and $\phi_{\beta}: Z_{\beta}\to Z$ the toric morphism induced by $\beta$ which extracts an exceptional divisor $E_{\beta}$. Let $X_{\beta}$ be the strict transform of $X$ on $Z_{\beta}$. By \cite[Proposition 2.1]{Jia21}, we have 
$$K_{Z_{\beta}}+X_{\beta}+(1-t)E_{\beta}=\phi_{\beta}^{*}(K_Z+X).$$
Since $X\ni x$ is an isolated klt singularity and $\dim X=3$, $(Z,X)$ is plt by inversion of adjunction. Thus $({Z_{\beta}},X_{\beta}+(1-t)E_{\beta})$ is plt. By the adjunction formula, $$K_{X_{\beta}}+B_{\beta}:=(K_{Z_{\beta}}+X_{\beta}+(1-t)E_{\beta})|_{X_{\beta}}=\phi_{\beta}^{*}K_X,$$
for some $B_{\beta}\ge0$, and the coefficients of $B_{\beta}$ are of the form $1-\frac{1-s(1-t)}{l}$ for some positive integers $l,s$ as $E_{\beta}$ intersects $X_{\beta}$. Since $X$ is enc, $\Supp B_{\beta}$ is a prime divisor, say $F_{\beta}$.

Let $v_{F_{\beta}}$ be the divisorial valuation of $F_{\beta}$. Thus $v_{F_{\beta}}(\bm{x}^{\bm{m}})=(1-\frac{1-s(1-t)}{l})\beta(\bm{x}^{\bm{m}})$ for any monomial $\bm{x}^{\bm{m}}$, where $\bm{m}\in M$, and $M$ is the dual sublattice of $\Zz^4+\Zz\cdot\frac{1}{r}(a,b,c,d)$. Hence such $\beta$ is unique by the primitivity.
\end{proof}

%$\xi:=e^{\frac{2\pi i}{r}}$,
We introduce the following setting. Roughly speaking, Theorem \ref{thm: jia21 rules enc case} below will show that if $$(X\ni x)\cong (f=0)\subset (\mathbb C^4\ni 0)/\frac{1}{r}(a_1,a_2,a_3,a_4)$$
is enc and a cyclic quotient of an isolated cDV singularity, then $f,a_i,r,e,k$, and $\beta$ should satisfy Setting~\ref{Setting: before terminal lem}. Therefore, we can transform the ACC conjecture in this case to computations on variables that satisfy Setting~\ref{Setting: before terminal lem}. 

\begin{sett}\label{Setting: before terminal lem}
We set up the following notation and conditions.
\begin{enumerate}
\item Let $r$ be a positive integer, $0\le a_1,a_2,a_3,a_4,e<r$ integers, such that \begin{enumerate}
       \item $\gcd(a_i,r)\mid\gcd(e,r)$ for any $1\le i\le 4$.
    \item $\gcd(a_i,a_j,r)=1$ for any $1\le i<j\le 4$.
    \item  $\sum_{i=1}^4a_i-e\equiv 1\mod r$.
\end{enumerate}
\item $f\in \mathbb C\{x_1,x_2,x_3,x_4\}$ is $\bm{\mu}$-semi-invariant, that is, ${\bm{\mu}}(f)=\xi^ef$, and is one of the following $3$ types:
\begin{enumerate}
    \item (cA type) $f=x_1x_2+g(x_3,x_4)$ with $g\in\mm^2$.
    \item (Odd type) $f=x_1^2+x_2^2+g(x_3,x_4)$ with $g\in\mm^3$ and $a_1\not\equiv a_2\mod r$.
    \item (cD-E type) $f=x_1^2+g(x_2,x_3,x_4)$ with $g\in\mm^3$,
\end{enumerate}
where $\mm$ is the maximal ideal of $\mathbb C\{x_1,x_2,x_3,x_4\}$, and ${\bm{\mu}}:\mathbb C^4\rightarrow\mathbb C^4$ is the action $(x_1,x_2,x_3,x_4)\rightarrow (\xi^{a_1}x_1,\xi^{a_2}x_2,\xi^{a_3}x_3,\xi^{a_4}x_4)$.
\item 
One of the two cases hold:
\begin{enumerate}
    \item $\alpha(x_1x_2x_3x_4)-\alpha(f)>1$ for any $\alpha\in N$. In this case, we let $k:=1$ and $\beta:=\bm{0}$.
    \item There exists an integer $k\ge 2$, and a primitive vector $\beta\in N$, such that
    \begin{enumerate}
        \item 
        \begin{itemize}
            \item either $\frac{1}{k}<\beta(x_1x_2x_3x_4)-\beta(f)\le \min\{\frac{12}{13},\frac{1}{k-1}\}$, or
            \item $\beta(x_1x_2x_3x_4)-\beta(f)=1$ and $k=2$, 
        \end{itemize} 
        and
        \item for any $\alpha\in N\backslash\{\beta,2\beta,\dots,(k-1)\beta\}$, $\alpha(x_1x_2x_3x_4)-\alpha(f)>1$,
    \end{enumerate}
\end{enumerate}
where $$N:=\{w\in\mathbb Q^4_{\ge 0}\mid w\equiv\frac{1}{r}(ja_1,ja_2,ja_3,ja_4)\mod \mathbb Z^4\text{ for some }j\in\mathbb Z\}\backslash\{\bm{0}\}.$$
\end{enumerate}
Moreover, if $f$ is of cA type, then for any integer $a$ such that $\gcd(a,r)=1$, $\frac{1}{r}(a_1,a_2,a_3,a_4,e)\not\equiv\frac{1}{r}(a,-a,1,0,0)\mod \Zz^5$. 
\end{sett}

%The following theorem is important.
\begin{thm}\label{thm: jia21 rules enc case}
Let $r$ be a positive integer, $0\le a_1,a_2,a_3,a_4,e<r$ integers, $\xi:=e^{\frac{2\pi i}{r}}$, $$N:=\{w\in\mathbb Q^4_{\ge 0}\mid w\equiv\frac{1}{r}(ja_1,ja_2,ja_3,ja_4)\mod \mathbb Z^4\text{ for some }j\in\mathbb Z\}\backslash\{\bm{0}\},$$
${\bm{\mu}}:\mathbb C^4\rightarrow\mathbb C^4$ the action  $(x_1,x_2,x_3,x_4)\rightarrow (\xi^{a_1}x_1,\xi^{a_2}x_2,\xi^{a_3}x_3,\xi^{a_4}x_4)$, and $f\in\mathbb C\{x_1,x_2,x_3,x_4\}$ a $\bm{\mu}$-semi-invariant analytic function such that ${\bm{\mu}}(f)=\xi^ef$. Suppose that $$(X\ni x)\cong (f=0)\subset (\mathbb C^4\ni 0)/\frac{1}{r}(a_1,a_2,a_3,a_4)$$
be a hyperquotient singularity such that  
\begin{itemize}
    \item $(Y\ni y):=(f=0)\cong (\mathbb C^4\ni 0)$ is an isolated cDV singularity,
    \item $\pi: (Y\ni y)\rightarrow (X\ni x)$ is the index one cover, and
    \item $(X\ni x)$ is enc, 
\end{itemize}
then possibly replacing $\frac{1}{r}(a_1,a_2,a_3,a_4)$ with $(\{\frac{ja_1}{r}\},\{\frac{ja_2}{r}\},\{\frac{ja_3}{r}\},\{\frac{ja_4}{r}\})$ for some $j$ such that $\gcd(j,r)=1$, and taking a $\bm{\mu}$-equivariant analytic change of coordinates and possibly permuting the coordinates $x_i$, we have that $a_i,e,r,f$ satisfy Setting~\ref{Setting: before terminal lem}.
\end{thm}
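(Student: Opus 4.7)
The overall plan is to combine three ingredients: an equivariant version of the classical normal-form classification of isolated cDV singularities, the numerical constraints coming from $\pi$ being the index-one cover, and Theorem~\ref{thm: jia21 2.4 enc case} applied to the enc hypothesis.

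I would begin by normalising $f$ under a $\bm\mu$-equivariant analytic change of coordinates together with a permutation of the $x_i$. Since $(Y\ni y)$ is cDV, $f\in\mathfrak m^2$, and I analyse its quadratic part. When the quadratic part has rank $2$ and splits as a product of two distinct linear forms of opposite characters, equivariant diagonalisation places $f$ in the cA shape $x_1x_2+g(x_3,x_4)$. When the rank is $1$ but a further equivariant change produces a second square $x_2^2$ whose character differs from that of $x_1^2$, I arrive at the Odd shape $x_1^2+x_2^2+g(x_3,x_4)$. In the remaining situation I end up in the cD--E shape $x_1^2+g(x_2,x_3,x_4)$. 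Equivariance is possible throughout because one restricts linear changes to groups of coordinates carrying the same character.

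Next I establish Setting~\ref{Setting: before terminal lem}(1). Isolation of $y\in Y$ combined with freeness of the $\bm\mu$-action off $y$ forces $\gcd(a_i,a_j,r)=1$ for $i\neq j$; otherwise a nontrivial subgroup of $\bm\mu$ would fix a positive-dimensional subvariety of $Y$ through the origin. The semi-invariance relation $\bm\mu(f)=\xi^e f$ together with the presence of monomials supported on each coordinate axis (needed for $Y$ to be an isolated singularity) gives $\gcd(a_i,r)\mid\gcd(e,r)$. The Poincar\'e residue $\mathrm{Res}\bigl((dx_1\wedge\cdots\wedge dx_4)/f\bigr)$ is a local generator of $\omega_Y$ near $y$ on which $\bm\mu$ acts by the character $\sum a_i-e$; since $\pi$ is the index-one cover this character must generate $\mathbb Z/r\mathbb Z$, and after replacing $\frac{1}{r}(a_1,\ldots,a_4)$ by its $j$-th multiple modulo $\mathbb Z^4$ for a suitable $j$ coprime to $r$, I can arrange $\sum a_i-e\equiv 1\pmod r$.

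Finally, for condition (3) of Setting~\ref{Setting: before terminal lem} I feed $(X\ni x)$ into Theorem~\ref{thm: jia21 2.4 enc case}. If no primitive $\beta\in N$ satisfies $\beta(x_1x_2x_3x_4)-\beta(f)\le 1$, take $k=1$ and $\beta=\bm 0$. Otherwise Theorem~\ref{thm: jia21 2.4 enc case} supplies a unique primitive $\beta$, and the number $t:=\beta(x_1x_2x_3x_4)-\beta(f)$ equals $a(E_\beta,X,0)$ for the toric weighted blow-up $E_\beta$ of weight $\beta$ by Lemma~\ref{lem: weighted blowup log discrepancies}; since $X$ is enc we have $t<1$, and then Theorem~\ref{thm: 12/13} forces $t\le 12/13$, so setting $k:=\lfloor 1/t\rfloor+1\ge 2$ places us in case~(3.b) of Setting~\ref{Setting: before terminal lem}. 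The excluded cA datum $\frac{1}{r}(a,-a,1,0,0)$ with $e=0$ would force $x_4$ to be $\bm\mu$-invariant and only $x_3^r$-powers of $x_3$ to appear in $f$, so substituting $u=x_3^r$ would realise $(X\ni x)$ as a three-dimensional cyclic quotient with smooth index-one cover, a case already handled by Theorem~\ref{thm: enc smooth cover case}. The most delicate step will be the equivariant normal-form reduction for $f$ in the first step, where the changes of coordinates must be chosen to respect every semi-invariance constraint simultaneously while still delivering exactly one of the three advertised shapes.
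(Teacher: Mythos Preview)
Your handling of Setting~\ref{Setting: before terminal lem}(1) and (2) is essentially the paper's argument (the paper simply cites \cite{Rei87} for the equivariant normal form and the numerical constraints). However, your treatment of condition~(3) and of the ``moreover'' clause contains genuine gaps.

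For condition~(3), you assert that $t:=\beta(x_1x_2x_3x_4)-\beta(f)$ equals $a(E_\beta,X,0)$ via Lemma~\ref{lem: weighted blowup log discrepancies}, and then invoke Theorem~\ref{thm: 12/13} to get $t\le 12/13$. This fails on two counts. First, Lemma~\ref{lem: weighted blowup log discrepancies} (via Definition~\ref{defn: weighted blowup log discrepancies}) is stated for \emph{terminal} $X$, whereas here $X$ is enc and hence not canonical. Second, and more fundamentally, $t$ is the log discrepancy of a toric divisor $E_\beta$ on the ambient quotient $Z=\mathbb C^4/\frac{1}{r}(a_1,\dots,a_4)$ with respect to $(Z,X)$, not a log discrepancy on $X$ itself. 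Restricting to $X$ via adjunction (as in the proof of Theorem~\ref{thm: jia21 2.4 enc case}) produces a prime divisor $F$ over $X$ with $a(F,X,0)=\frac{1-s(1-t)}{l}$ for some positive integers $s,l$, which need not equal $t$. Theorem~\ref{thm: 12/13} bounds $\mld(X)$, not $t$, so your inference $t\le 12/13$ is unjustified. The paper instead excludes $t\in(12/13,1)$ by appealing to the detailed case analysis of \cite[Section~4]{Jia21} (via \cite[Theorem~1.6, Lemmas~6.3, 6.4]{LX21} and \cite[Lemma~2.12, Remark~2.13]{Jia21}): one shows that such a $t$ would force $(X\ni x,\beta)$ to satisfy Jiang's Rules I--III, and \cite[Section~4]{Jia21} proves no such configuration exists. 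There is no shortcut here through Theorem~\ref{thm: 12/13}.

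For the ``moreover'' clause, your argument that the datum $\frac{1}{r}(a,-a,1,0,0)$ reduces to a smooth index-one cover is incorrect: the hypothesis already says $(Y\ni y)=(f=0)$ is an isolated cDV singularity, hence not smooth, and the substitution $u=x_3^r$ does not change this. The correct exclusion is that by \cite[Theorem~6.5]{KSB88} a cA-type hyperquotient with this datum is \emph{terminal}, contradicting the enc hypothesis (which forces $\mld(X)<1$).
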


\begin{proof}
By \cite[Page 394]{Rei87}, since $\bm{\mu}$ acts freely outside $y$, $a_i,e,r$ satisfy Setting~\ref{Setting: before terminal lem}(1.a) and (1.b).
Let $s\in\omega_Y$ be a generator, then $\bm{\mu}$ acts on $s$ by $s\rightarrow \xi^{\sum_{i=1}^4a_i-e}s$. Since the Cartier index of $K_X$ near $x$ is $r$, $\gcd(\sum_{i=1}^4a_i-e,r)=1$, and $a_i,e,r$ satisfy Setting~\ref{Setting: before terminal lem}(1.c). By \cite[Page 394-395]{Rei87} and \cite[(6.7) Proposition]{Rei87} (see also \cite[Proposition 4.2]{Jia21}), $f$ satisfies Setting~\ref{Setting: before terminal lem}(2).

By Theorem \ref{thm: jia21 2.4 enc case}, in order to show that $f$ satisfies Setting~\ref{Setting: before terminal lem}(3), we only need to prove that $$\beta(x_1x_2x_3x_4)-\beta(f)\not\in (\frac{12}{13},1).$$ We may assume that $r>13$. By \cite[Theorem 1.6, Lemmas 6.3 and 6.4]{LX21} and \cite[Lemma 2.12, Remark 2.13]{Jia21}, if $\beta(x_1x_2x_3x_4)-\beta(f)\in (\frac{12}{13},1)$, then $(X\ni x)$ and $\beta$ satisfy \cite[Section 4, Rules I-III]{Jia21}, which is absurd according to \cite[Section 4]{Jia21}. 

It suffices to show that $a_i,e,r$ satisfy the ``moreover'' part of Setting~\ref{Setting: before terminal lem}. By \cite[Theorem 6.5]{KSB88}, if $\frac{1}{r}(a_1,a_2,a_3,a_4,e)\equiv\frac{1}{r}(a,-a,1,0,0)\mod \Zz^5$, then $X\ni x$ is a terminal singularity, which leads to a contradiction.
\end{proof}

\begin{thm}\label{thm: beta finite}
With notation and conditions as in Setting \ref{Setting: before terminal lem}, either $r$ or $\beta\not=\bm{0}$ belongs to a finite set depending only on $k$. In particular, $\beta(x_1x_2x_3x_4)-\beta(f)$ belongs to a finite set depending only on $k$, and $\beta(g)$ belongs to a discrete set for any analytic function $g$.
\end{thm}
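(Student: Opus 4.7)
The strategy is to carry out a detailed case-by-case classification based on the three types of $f$ listed in Setting \ref{Setting: before terminal lem}(2), with $k$ regarded as fixed. Writing $\beta = \frac{1}{r}(w_1,w_2,w_3,w_4)$ with $w_i \in \mathbb{Z}_{\geq 0}$, the condition $\beta \in N$ means $w_i \equiv j_0 a_i \pmod{r}$ for some $j_0 \in \mathbb{Z}$, and primitivity of $\beta$ together with Setting \ref{Setting: before terminal lem}(1.a)--(1.b) constrains $j_0$ tightly relative to $r$. The key inequality to exploit is $\beta(x_1x_2x_3x_4) - \beta(f) \in (\tfrac{1}{k}, \min\{\tfrac{12}{13}, \tfrac{1}{k-1}\}]$, together with the boundary case $\beta(x_1x_2x_3x_4) - \beta(f) = 1$ when $k=2$.

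First I would handle the cA type $f = x_1 x_2 + g(x_3, x_4)$. Since $\beta(f) = \min\{w_1+w_2, w(g)\}$, the inequality becomes an upper bound on $w_3 + w_4 - \min\{w_1+w_2, w(g)\}$ in terms of $r/(k-1)$. The exclusion of $\tfrac{1}{r}(a,-a,1,0,0)$ in the ``moreover'' clause of Setting \ref{Setting: before terminal lem} forbids the terminal degenerate solution with $w_3=1,w_4=0,w_1+w_2=r$; every other configuration compatible with the inequality and the congruences either forces $r$ to be bounded in terms of $k$ or pins down all four rational coordinates of $\beta$ to a finite list. The Odd type $f = x_1^2 + x_2^2 + g(x_3,x_4)$ is analogous once one uses $2\min\{w_1,w_2\} \leq w(g)$ and the congruence $a_1 \not\equiv a_2 \pmod r$ to restrict the residues.

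The cD-E type $f = x_1^2 + g(x_2,x_3,x_4)$ is the main obstacle, and its treatment is the content of Appendix \ref{appendix: cDV quotient classificaiton}. Here I would combine Reid's classification of the admissible leading monomials of $g$ for cDV singularities (e.g. $x_2^2 x_3$ for cD, $x_2^3$ for cE, etc.) with the semi-invariance $\bm{\mu}(f)=\xi^e f$ and the congruence data of Setting \ref{Setting: before terminal lem}(1). For each admissible leading term the weight inequality carves out a rational polytope in $(w_1,\ldots,w_4)$-space, and one must verify that the lattice points satisfying $w_i \equiv j_0 a_i \pmod r$ either have $r$ bounded in terms of $k$ or give a $\beta$ from a finite list. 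The branches in which $r$ seemingly escapes to infinity must collapse either to a terminal quotient (forbidden by Setting \ref{Setting: before terminal lem}) or to arithmetic data $(a_1,\ldots,a_4,e)$ that is itself forced into a finite set, which bounds $r$. The hard part is that this dichotomy has to be verified branch by branch, and is sensitive to the precise upper bound $\min\{12/13,1/(k-1)\}$.

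Finally, the ``in particular'' clause is immediate from the dichotomy. When $r$ is bounded, $\beta \in \tfrac{1}{r}\mathbb{Z}^4$ lies in a finite set, so $\beta(x_1x_2x_3x_4) = (w_1+w_2+w_3+w_4)/r$ is finite and $\beta(g) = \min\{\beta(\bm{x}^{\bm{\alpha}}) : c_{\bm{\alpha}}\neq 0\}$ takes values in the discrete set $\tfrac{1}{r}\mathbb{Z}_{\geq 0}$. When instead $\beta$ lies in a finite set of rationals independently of $r$, the same conclusions hold with $\beta(g)$ a minimum over the discrete set $\sum_{i=1}^4 \mathbb{Z}_{\geq 0}\cdot (w_i/r)$ with the ratios $w_i/r$ fixed. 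In either case $\beta(x_1x_2x_3x_4)-\beta(f)$ lies in a finite set depending only on $k$, completing the proof.
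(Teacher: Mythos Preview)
Your overall plan---split by the type of $f$ and argue in each case that either $r$ or $\beta$ is confined to a finite set---is the same as the paper's, but your execution has a genuine gap. You exploit only Setting~\ref{Setting: before terminal lem}(3)(b)(i), the inequality on $\beta(x_1x_2x_3x_4)-\beta(f)$ itself, and never invoke (3)(b)(ii), the condition that \emph{every} $\alpha\in N\setminus\{\beta,2\beta,\ldots,(k-1)\beta\}$ satisfies $\alpha(x_1x_2x_3x_4)-\alpha(f)>1$. The single inequality on $\beta$ is far too weak: in the cA case, for instance, knowing only that $(w_3+w_4)/r\le 1/(k-1)$ together with the congruences and the exclusion of the terminal type does not bound $r$ or pin down $\beta$, since there are infinitely many $(r,a_3,a_4,j_0)$ with $\{j_0a_3/r\}+\{j_0a_4/r\}$ small. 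The paper's actual mechanism is to use (3)(b)(ii) on \emph{all} the $\alpha_j$ to force the terminal-lemma identity $\sum_i\{ja_i/r\}=\{je/r\}+j/r+1$ for every $1\le j\le r-1$ (Theorems~\ref{thm: cA case up to terminal lemma}(5) and~\ref{thm: non-cA case up to terminal lemma}(5)), then apply Reid's terminal lemma (Theorem~\ref{thm: terminal lemma}) to reduce $(a_1,\ldots,a_4,e)$ to a short explicit list of congruence classes, and finally for each class choose specific test weights $\alpha_j\in N^0$ whose evaluation on $f$ forces a contradiction unless $r$ or $\beta$ is bounded. Your rational-polytope picture never reaches this structural input and cannot close the argument without it.

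Your ``in particular'' paragraph also contains a slip: when only $r$ is bounded, $\beta$ itself need \emph{not} lie in a finite set (its integer parts are unconstrained), and neither does $\beta(x_1x_2x_3x_4)$. The correct deduction is that in either branch of the dichotomy there is a single integer $n$ depending only on $k$ with $n\beta\in\mathbb{Z}_{\ge0}^4$ (take $n$ to be the lcm of the finitely many possible $r$'s, or of the denominators of the finitely many $\beta$'s). Then $\beta(g)\in\frac{1}{n}\mathbb{Z}_{\ge0}$ is discrete for any $g$, and combining this with the a~priori bound $\beta(x_1x_2x_3x_4)-\beta(f)\in(0,1]$ from Setting~\ref{Setting: before terminal lem}(3) gives the finiteness of that difference.
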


\begin{proof}
Proving that either $r$ or $\beta\not=\bm{0}$ belongs to a finite set depending only on $k$ is elementary but requires complicated computations, so we omit the proof and refer the reader to \cite[Theorem 1.2]{HL25} (which was Theorem A.1 of the first arXiv version\footnote{See \href{https://arxiv.org/abs/2209.13122v1}{ arXiv:2209.13122v1}.} of the present paper). 

We are left to prove the ``in particular''-part. There exists a positive integer $n$ depending only on $k$ such that $n\beta\in \mathbb Z_{\ge 0}^4$. Thus $\beta(g)$ belongs to the discrete set $\frac{1}{n}\mathbb Z_{\ge 0}$ for any analytic function $g$. Since $\beta(x_1x_2x_3x_4)-\beta(f)\in (0,1]$, $\beta(x_1x_2x_3x_4)-\beta(f)$ belongs to the finite set $\frac{1}{n}\mathbb Z_{\ge 0}\cap (0,1]$. 
\end{proof}

\begin{thm}\label{thm: enc cDV cover case}
Let $\Ii\subset[0,1]$ be a DCC (resp. finite) set. Assume that $(X\ni x,B)$ is a $\Qq$-factorial enc pair of dimension $3$, such that
\begin{enumerate}
    \item $X\ni x$ is an isolated non-canonical singularity, 
    \item $\coeff(B)\subseteq \Ii$, and
    \item $\tilde X\ni\tilde x$ is terminal but not smooth, where $\pi: (\tilde X\ni\tilde x)\rightarrow (X\ni x)$ is the index $1$ cover of $X\ni x$.
\end{enumerate}
Then $\mld(X\ni x,B)$ belongs to an ACC set (resp. is discrete away from $0$).
\end{thm}
\begin{proof}
We only need to show that for any positive integer $l\ge 2$, if $\mld(X\ni x)\in (\frac{1}{l},\frac{1}{l-1}]$, then $\mld(X\ni x,B)$ belongs to an ACC set (resp. is discrete away from $0$). %In the following, we may fix a positive integer $l\ge 2$ and suppose that $\mld(X\ni x)\in (\frac{1}{l},\frac{1}{l-1}]$.

There exists a positive integer $r$, integers $0\le a_1,a_2,a_3,a_4,e$, and $\xi:=e^{\frac{2\pi i}{r}}$, such that $$(X\ni x)\cong \left((f=0)\subset(\mathbb C^4\ni 0)\right)/\bm{\mu},$$ where $\bm{\mu}:\mathbb C^4\rightarrow\mathbb C^4$ is the action $(x_1,x_2,x_3,x_4)\rightarrow (\xi^{a_1}x_1,\xi^{a_2}x_2,\xi^{a_3}x_3,\xi^{a_4}x_4)$
and $f$ is $\bm{\mu}$-semi-invariant, such that $\bm{\mu}(f)=\xi^ef$. By Setting~\ref{Setting: before terminal lem}(1.c), possibly replacing $(a_1,a_2,a_3,a_4)$ and $e$, we may assume that $a_1+a_2+a_3+a_4-e\equiv 1\mod r$. Moreover, possibly shrinking $X$ to a neighborhood of $x$, we may write $B=\sum_{i=1}^mb_iB_i$ where $B_i$ are the irreducible components of $B$ and $x\in\Supp B_i$ for each $i$. Then $b_i\in\Ii$, and we may identify $B_i$ with $\left((f_i=0)\subset(\mathbb C^4\ni 0)\right)/\bm{\mu}|_{X}$ for some $\bm{\mu}$-semi-invariant function $f_i$ for each $i$.  

Let $$N:=\{w\in\mathbb Q_{\ge 0}^4\mid w=\frac{1}{r}(ja_1,ja_2,ja_3,ja_4)\mod \Zz^4\text{ for some }j\in\mathbb Z\}\backslash\{\bm{0}\}.$$

By Setting~\ref{Setting: before terminal lem}(3), there are two cases:

\medskip

\noindent\textbf{Case 1}. $\alpha(x_1x_2x_3x_4)-\alpha(f)>1$ for any $\alpha\in N$. In this case, by Theorems \ref{thm: jia21 rules enc case}, \ref{thm: beta finite}, $r$ belongs to a finite set. Since $(X\ni x,B)$ is enc and $X\ni x$ is non-canonical, there exists a unique prime divisor $E$ over $X\ni x$, such that $a(E,X,B)=\mld(X\ni x,B)$ and $a(E,X,0)<1$. Since $rK_X$ is Cartier, $r\,a(E,X,0)$ belongs to a finite set and $r\mult_EB_i\in\Zz_{\ge 1}$ for each $i$. Thus $$a(E,X,B)=a(E,X,0)-\sum_{i=1}^mb_i\mult_EB_i$$
belongs to an ACC set (resp. finite set).

\medskip

\noindent\textbf{Case 2}. There exists a unique primitive vector $\beta\in N$ and an integer $k\ge 2$, such that $$\frac{1}{k}<\beta(x_1x_2x_3x_4)-\beta(f)\le \frac{1}{k-1}.$$ We consider the pair $$(Z\ni z,X+B_Z):=\left(\mathbb C^4\ni 0, (f=0)+\sum_{i=1}^mb_i(f_i=0)\right)/\bm{\mu}.$$
By \cite[Proposition 2.1]{Jia21}, the primitive vector $\beta\in N$ corresponds to a divisor $E$ over $Z\ni z$, and 
\begin{equation}\label{eqn: a beta}
 a:=a(E,Z,X+B_Z)=\beta(x_1x_2x_3x_4)-\beta(f)-\sum_{i=1}^mb_i\beta(f_i=0).
\end{equation}
In particular, $0<a\le \frac{1}{k-1}\le 1$. Let $h: W\rightarrow Z$ be the birational morphism which extracts $E$, then
$$K_W+X_W+B_W+(1-a)E=h^*(K_Z+X+B_Z),$$
where $X_W$ and $B_W$ are the strict transforms of $X$ and $B$ on $W$ respectively. 

Since $(X,B)$ is klt near $x$, $X\ni x$ is an isolated singularity, and $\dim X=3$, $(Z,X+B_Z)$ is plt by the inversion of adjunction. Thus $(W,X_W+B_W+(1-a)E)$ is plt. Since $h$ is a divisorial contraction of $E$ and $\Center_Z E=x$, $E$ is $\Qq$-Cartier, and $\Supp (E\cap X_W)$ contains a prime divisor $F$ which does not belong to $\Supp (B_W\cap E)$. By the adjunction formula, 
$$a(F,X,B)=\frac{1}{n}(1-s(1-a))\le 
a\le \frac{1}{k-1}$$
for some positive integers $n,s$. Since $(X\ni x,B)$ is enc, $a(F,X,B)=\mld(X\ni x,B)>\frac{1}{l}$, hence $k\le l$. Thus $k$ belongs to a finite set. By Theorems \ref{thm: jia21 rules enc case} and \ref{thm: beta finite}, $a\le 1$ belongs to an ACC set (resp. finite set). Thus 
$$\mld(X\ni x,B)=a(F,X,B)=\frac{1}{n}(1-s(1-a))$$ belongs to an ACC set (resp. is discrete away from $0$), and we are done.
\end{proof}

\section{Proofs of other main results}

\begin{lem}\label{lem: e to n}
For any positive integer $l$, Theorem \hyperlink{thm: E}{E}$_l$ implies Theorem \hyperlink{thm: N}{N}$_l$.
\end{lem}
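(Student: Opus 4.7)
The plan is to argue by contradiction, using Lemma \ref{lem: reduce to enc} as the bridge that turns a bad sequence in $\mathcal{N}(l,N,\Ii)$ into a bad sequence in $\mathcal{E}(l,\Ii')$ to which Theorem \hyperlink{thm: E}{E}$_l$ applies. I would assume Theorem \hyperlink{thm: N}{N}$_l$ fails, and pick a sequence $(X_i,B_i)\in\mathcal{N}(l,N,\Ii)$ whose mlds violate the ACC; passing to a subsequence, $\{\mld(X_i,B_i)\}$ is strictly increasing.

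Next I would dispose of the possibility that the mld is computed on $\Supp B_i$. If $\mld(X_i,B_i)=1-b_{i,j}$ for some coefficient $b_{i,j}\in\Ii$, then $\mld(X_i,B_i)$ lies in the ACC set $\{1-b\mid b\in\Ii\}$ (as $\Ii$ is DCC), which contradicts strict monotonicity once this occurs for infinitely many $i$. After a further subsequence I may therefore assume that for every $i$ the value $\mld(X_i,B_i)$ is computed by an exceptional prime divisor, so that
$$\mld(X_i,B_i)\in\Ii_i:=\{a(E,X_i,B_i)\mid E\text{ is exceptional over }X_i\}\cap[0,1].$$
The definition of $\mathcal{N}(l,N,\Ii)$ gives $|\Ii_i|\leq N$, the previous step gives $|\Ii_i|\geq 1$, and $\bigcup_i\Ii_i$ contains the strictly increasing sequence $\{\mld(X_i,B_i)\}$, hence fails the ACC.

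All hypotheses of Lemma \ref{lem: reduce to enc} are now in place; applying it (and passing to one last subsequence) yields a DCC set $\Ii'\subset[0,1]$ and $\Qq$-factorial enc threefold pairs $(X_i',B_i')$ with $B_i'\in\Ii'$, with $\{\mld(X_i',B_i')\}$ strictly increasing, and with $\mld(X_i',B_i')\geq\mld(X_i,B_i)>\tfrac{1}{l}$. Thus $(X_i',B_i')\in\mathcal{E}(l,\Ii')$, and the strict monotonicity of their mlds contradicts Theorem \hyperlink{thm: E}{E}$_l$. The heavy lifting is packaged inside Lemma \ref{lem: reduce to enc}, so no step of the proposal requires a substantive new argument; the only care needed is the subsequence bookkeeping and the elementary disposal of the case where the mld lands on a boundary component of $B_i$.
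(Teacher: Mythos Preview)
Your proof is correct and follows the same approach as the paper, which simply writes ``This follows from Lemma~\ref{lem: reduce to enc}.'' You have just filled in the bookkeeping that the paper leaves implicit: extracting a strictly increasing subsequence of mlds, verifying the hypothesis $1\le\#\Ii_i$, and checking that the output pairs $(X_i',B_i')$ land in $\mathcal{E}(l,\Ii')$. The extra paragraph disposing of the case where $\mld(X_i,B_i)$ is computed by a component of $B_i$ is harmless and makes the argument robust to either convention for $\mld(X,B)$; under the paper's implicit convention (mld taken over exceptional divisors, so that $\mld(X_i,B_i)<1$ already forces $\#\Ii_i\ge1$) that step is unnecessary, but it does no damage.
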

\begin{proof}
This follows from Lemma \ref{lem: reduce to enc}.
\end{proof}

\begin{lem}\label{lem: c to e}
For any positive integer $l$, Theorem \hyperlink{thm: C}{C}$_l$ implies Theorem \hyperlink{thm: E}{E}$_l$.
\end{lem}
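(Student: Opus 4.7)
The plan is to combine Theorem \hyperlink{thm: C}{C}$_l$ with the case-specific ACC results for enc threefold pairs established in Sections~5 and 6. Given $(X,B)\in\mathcal{E}(l,\Ii)$, let $E$ be the unique exceptional prime divisor over $X$ with $a(E,X,B)=\mld(X,B)<1$, and set $x:=\Center_X E$. If $\dim x=1$, then $\mld(X,B)$ already belongs to an ACC set by Lemma \ref{lem: acc enc center curve case}, so I may assume throughout that $x$ is a closed point.

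Now I would split into sub-cases according to the singularity type of $X\ni x$. If $X\ni x$ is strictly canonical, Theorem \ref{thm: acc mld enc dim 3 strictly canonical case} applies. If $X\ni x$ is non-canonical, the enc condition forces every other closed point of $X$ near $x$ to be terminal, hence (in dimension three) isolated, so after shrinking $X$ around $x$ we have that $x$ is an isolated singularity. Form the index one cover $\pi:(\tilde X\ni\tilde x)\to(X\ni x)$; since $\tilde X\ni\tilde x$ is a Gorenstein klt threefold singularity, it is canonical, hence either smooth, terminal but not smooth (an isolated cDV singularity), or strictly canonical. These three sub-cases are covered by Theorems \ref{thm: enc smooth cover case}, \ref{thm: enc cDV cover case}, and \hyperlink{thm: C}{C}$_l$ respectively.

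The remaining sub-case is $X\ni x$ terminal, where the available finite-coefficient statement Theorem \ref{thm: enc ACC fin coeff} does not directly apply to the DCC set $\Ii$. I would invoke Theorem \ref{thm: enc terminal reduce to fin coeff} with $\epsilon=\tfrac1l$, which reduces the ACC for $\mld(X,B)\in[\tfrac1l,1]$ in the DCC terminal case to the ACC for $\mld\in[\tfrac1l,1]$ over all $\Qq$-factorial threefold enc germs whose boundary lies in a finite set $\Ii_0$ (with no terminal hypothesis). This finite-coefficient statement is obtained by rerunning the case analysis above: the non-terminal sub-cases (Lemma \ref{lem: acc enc center curve case} and Theorems \ref{thm: acc mld enc dim 3 strictly canonical case}, \ref{thm: enc smooth cover case}, \ref{thm: enc cDV cover case}, \hyperlink{thm: C}{C}$_l$) all work at the DCC level and therefore for finite coefficients, while the terminal sub-case with finite coefficients is exactly Theorem \ref{thm: enc ACC fin coeff}. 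Chaining these with Theorem \ref{thm: enc terminal reduce to fin coeff} then gives the terminal DCC sub-case and completes the proof of Theorem \hyperlink{thm: E}{E}$_l$.

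The main subtlety, and the step I expect to be the only non-formal one, is the two-step bootstrap for the terminal sub-case: unlike the strictly canonical and non-canonical sub-cases, which fall immediately to a single result (including Theorem \hyperlink{thm: C}{C}$_l$ itself for the sub-case where the index one cover is strictly canonical), the DCC terminal sub-case must first be reduced via Theorem \ref{thm: enc terminal reduce to fin coeff} to the finite-coefficient ACC for \emph{all} $\Qq$-factorial enc threefold pairs, which in turn recombines every other sub-case together with Theorem \ref{thm: enc ACC fin coeff}. Once this circularity is resolved in the correct order, the argument assembles cleanly.
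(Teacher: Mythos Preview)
Your proposal is correct and follows essentially the same approach as the paper's proof. The only difference is presentational: the paper writes the terminal-case reduction tersely as ``by Theorem~\ref{thm: enc terminal reduce to fin coeff} we may assume either $X$ is terminal and $\Ii$ is finite, or $X$ is not terminal,'' whereas you spell out explicitly the two-pass bootstrap (first establish the non-terminal sub-cases for DCC coefficients, then combine them with Theorem~\ref{thm: enc ACC fin coeff} to verify the finite-coefficient hypothesis of Theorem~\ref{thm: enc terminal reduce to fin coeff}, which then yields the terminal DCC sub-case); this makes the logical dependency clearer but is the same argument.
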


\begin{proof}   
Let $(X,B)\in\mathcal{E}(l,\Ii)$, and $E$ the unique exceptional prime divisor over $(X,B)$ such that $a(E,X,B)=\mld(X,B)$. 

By Lemma \ref{lem: acc enc center curve case}, we may assume that $x:=\Center_XE$ is a closed point. By Theorem \ref{thm: enc terminal reduce to fin coeff}, we may assume that either $X$ is terminal and $\Ii$ is a finite set, or $X$ is not terminal. By Theorem \ref{thm: enc ACC fin coeff}, we may assume that $X$ is not terminal. By Theorem \ref{thm: acc mld enc dim 3 strictly canonical case}, we may assume that $X$ is not canonical. Let $(\tilde X\ni\tilde x)\rightarrow (X\ni x)$ be the index $1$ cover of $X\ni x$.  Then $\tilde X\ni\tilde x$ is smooth, or an isolated cDV singularity, or a strictly canonical singularity. By Theorems \ref{thm: enc smooth cover case} and \ref{thm: enc cDV cover case}, we may assume that $\tilde X\ni\tilde x$ is strictly canonical. By Theorem \hyperlink{thm: C}{C}$_l$, $\mld(X,B)$ belongs to an ACC set.
\end{proof}

\begin{lem}\label{lem: n to c}
For any positive integer $l\ge 2$, Theorem \hyperlink{thm: N}{N}$_{l-1}$ implies Theorem \hyperlink{thm: C}{C}$_l$.
\end{lem}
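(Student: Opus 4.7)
The plan is to pass to the index $1$ cover $\pi:(\tilde X\ni \tilde x)\to (X\ni x)$ and reduce the statement to Theorem \hyperlink{thm: N}{N}$_{l-1}$ applied to the pulled-back pair $(\tilde X,\tilde B)$, where $\tilde B:=\pi^{*}B$. Since $rK_X$ is Cartier for $r$ the index and $\pi$ is étale in codimension one, I will have $K_{\tilde X}+\tilde B=\pi^{*}(K_X+B)$; the coefficients of $\tilde B$ remain in the DCC set $\Ii$, and $(\tilde X,\tilde B)$ is klt.

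The key numerical identity I would use is $a(\tilde E,\tilde X,\tilde B)=e\cdot a(E,X,B)$ for every exceptional prime $\tilde E$ over $\tilde X$, where $E$ is the corresponding valuation on $X$ (via restriction of function fields) and $e$ is the ramification index of $\tilde E/E$. Let $E_0$ be the unique exceptional prime of $(X,B)$ with $a(E_0,X,B)\le 1$. Any $\tilde E$ with $a(\tilde E,\tilde X,\tilde B)\le 1$ satisfies $a(E,X,B)\le 1/e\le 1$, so $E=E_0$ by the enc condition; together with $a(E_0,X,B)=\mld(X,B)>1/l$ this gives $e\le l-1$.

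The critical step is the lower bound $e\ge 2$. Since $\mld(X)<1$ and $(X,B)$ is enc, $E_0$ is also the unique exceptional with $a(E_0,X,0)\le 1$, and in fact $a(E_0,X,0)<1$. The strictly canonical hypothesis on $\tilde X$ then forces $a(\tilde E,\tilde X,0)=e\cdot a(E_0,X,0)\ge 1$, so $e\ge 2$ (as $a(E_0,X,0)<1$ and $e$ is an integer). Combining these bounds,
\[
\{a(\tilde E,\tilde X,\tilde B)\le 1\mid \tilde E\text{ exceptional over }\tilde X\}\subseteq\{2\mld(X,B),3\mld(X,B),\dots,(l-1)\mld(X,B)\},
\]
a finite set of cardinality at most $l-2$, and $\mld(\tilde X,\tilde B)\ge 2\mld(X,B)>2/l\ge 1/(l-1)$.

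To conclude, I will apply Theorem \hyperlink{thm: N}{N}$_{l-1}$ to $(\tilde X,\tilde B)$ with $N:=l-2$ and DCC set $\Ii$. In the boundary case $\mld(\tilde X,\tilde B)=1$, one reads off $\mld(X,B)\in\{1/2,\dots,1/(l-1)\}$, a finite set; otherwise Theorem \hyperlink{thm: N}{N}$_{l-1}$ places $\mld(\tilde X,\tilde B)$ in an ACC set, and since $\mld(X,B)=\mld(\tilde X,\tilde B)/e$ with $e$ in the finite set $\{2,\dots,l-1\}$, $\mld(X,B)$ belongs to an ACC set as well. The main obstacle is establishing $e\ge 2$: this is the only place where the canonicality of the cover and the non-canonicality of the base are both essential, and without it the induction on $l$ would stall.
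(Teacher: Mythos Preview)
Your proposal is correct and follows essentially the same approach as the paper: pull back to the index $1$ cover, use the ramification formula $a(\tilde E,\tilde X,\tilde B)=r_{\tilde E}\cdot a(E,X,B)$, deduce $r_{\tilde E}\ge 2$ from canonicality of $\tilde X$ together with $a(E_0,X,0)<1$, bound the number of small log discrepancies by $l-2$, and apply Theorem~\hyperlink{thm: N}{N}$_{l-1}$. Your explicit treatment of the boundary case $\mld(\tilde X,\tilde B)=1$ is a welcome addition (the paper glosses over this), and the minor imprecision in asserting that $E_0$ is the \emph{unique} exceptional with $a(E_0,X,0)\le 1$ is harmless since you only use $a(E_0,X,0)<1$.
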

\begin{proof}
Let $(X\ni x,B)\in\mathcal{C}(l,\Ii)$, $\pi: (\tilde X\ni\tilde x)\rightarrow (X\ni x)$ the index $1$ cover of $X\ni x$, and $\tilde B:=\pi^*B$. then $\coeff(\tilde B)\subseteq\Ii$. 

Since $\mld(X)<1$, there exists an exceptional prime divisor $E$ over $X$, such that $a(E,X,B)\le a(E,X,0)=\mld(X)< 1$. Thus $E$ is the unique exceptional prime divisor over $X$ such that $a(E,X,B)\le 1$ as $(X\ni x,B)$ is enc. In particular, $a(E,X,B)=\mld(X\ni x,B)$. Hence for any exceptional prime divisor $\tilde E$ over $\tilde X$ such that $a(\tilde E,\tilde X,\tilde B)\le 1$, we have $a(\tilde E,\tilde X,\tilde B)=r_{\tilde E}a(E,X,B)$, where $r_{\tilde E}$ is the ramification index of $\pi$ along $\tilde E$. Since $(\tilde X\ni \tilde x)$ is canonical, 
$$1\le a(\tilde E,\tilde X,0)=r_{\tilde E}a(E,X,0)<r_{\tilde E},$$
so $r_{\tilde E}\ge 2$ for any $\tilde E$. It follows that
\begin{align*}
\mld(\tilde X,\tilde B)\in &\{a(\tilde E,\tilde X,\tilde B)\le 1\mid \tilde E\text{ is exceptional over }\tilde X\}
\subseteq\{2\,\mld(X,B),\dots,(l-1)\,\mld(X,B)\}\end{align*}
as $\mld(X,B)>\frac{1}{l}$ and $(\tilde X\ni \tilde x)$ is strictly canonical. In particular,
$$1\le \#(\{a(\tilde E,\tilde X,\tilde B)\mid \tilde E\text{ is exceptional over }\tilde X\}\cap [0,1])\le l-2.$$ 
Moreover, since $a(\tilde E,\tilde X,\tilde B)=r_{\tilde E}a(E,X,B)>\frac{2}{l}\ge\frac{1}{l-1}$, we have $\mld(\tilde X,\tilde B)>\frac{1}{l-1}$. Thus by Theorem \hyperlink{thm: N}{N}$_{l-1}$, $\mld(\tilde X,\tilde B)$ belongs to an ACC set, which implies that $\mld(X,B)$ also belongs to an ACC set.
\end{proof}

\begin{proof}[Proof of Theorems \hyperlink{thm: E}{E}, \hyperlink{thm: N}{N}, and \hyperlink{thm: C}{C}]
These follow from Lemmas \ref{lem: e to n}, \ref{lem: c to e}, \ref{lem: n to c}.
\end{proof}

\begin{proof}[Proof of Theorem \ref{thm: acc mld 3dim bounded ld number}]
By \cite[Theorem 1.1]{HLL22}, we may assume that $\mld (X,B)<1$. Now the theorem follows from Theorem \hyperlink{thm: N}{N}.
\end{proof}

\begin{proof}[Proof of Theorem \ref{thm: acc mld enc dim 3}] This follows from Theorem \ref{thm: acc mld 3dim bounded ld number}.
\end{proof}

\begin{cor}[=Corollary \ref{cor: tof dim 3}]\label{cor: new proof tof dim 3}
Any sequence of lc flips 
$$(X,B):=(X_0,B_0)\dashrightarrow (X_1,B_1)\dashrightarrow\dots (X_i,B_i)\dashrightarrow\dots$$
terminates in dimension $3$.
\end{cor}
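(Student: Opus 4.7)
The plan is to deduce Corollary \ref{cor: new proof tof dim 3} directly from Theorem \ref{thm: ter acc lsc+enc to tof} (=Theorem \ref{thm: strong lsc+acc to tof}) in dimension $d=3$, by verifying its three hypotheses in this dimension. There is no new geometric input needed beyond what has already been assembled in the paper and in the cited literature; the corollary is essentially an assembly statement.

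First I would recall that Theorem \ref{thm: ter acc lsc+enc to tof} asserts that, in dimension $d$, the ACC for mlds for terminal pairs with finite coefficients, the LSC for mlds for terminal pairs, and Conjecture \ref{conj: acc mld enc}(2') together imply the termination of lc flips in dimension $\le d$. So it suffices to check each of these three inputs for $d=3$. For hypothesis (1), the ACC for mlds for terminal threefold pairs with finite (indeed DCC) coefficients is \cite[Theorem 1.1]{HLL22}; this is already used throughout the paper. For hypothesis (2), the LSC conjecture for mlds for terminal threefolds follows from \cite{NS21} together with the fact that terminal threefold singularities are hyperquotient (as recalled in the introduction). For hypothesis (3), the statement of Conjecture \ref{conj: acc mld enc}(2') in dimension $3$ (ACC for global mlds of enc threefold pairs with finite coefficients) is a special case of Theorem \ref{thm: acc mld enc dim 3}, which we have just established.

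Thus, given an arbitrary sequence of lc flips
\[
(X,B)=(X_0,B_0)\dashrightarrow (X_1,B_1)\dashrightarrow \cdots \dashrightarrow (X_i,B_i)\dashrightarrow \cdots
\]
with $\dim X=3$, applying Theorem \ref{thm: ter acc lsc+enc to tof} for $d=3$ yields that the sequence terminates. No step here is a real obstacle: all three inputs are already available in the literature or proved earlier in this paper. The only thing to emphasize is precisely that no difficulty function is invoked anywhere in this chain of reasoning — the termination is extracted purely from the ACC and LSC properties for mlds (on terminal and enc pairs), as is the whole point of Theorem \ref{thm: ter acc lsc+enc to tof}. This gives a proof of the termination of threefold lc flips logically independent of the difficulty-function approach of \cite{Sho85,KMM87,Kaw92a,Kol+92,Sho96}, and, as emphasized in the introduction, also independent of \cite{Sho04} since the full ACC conjecture for mlds in dimension $3$ is not needed.
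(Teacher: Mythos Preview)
Your proposal is correct and follows essentially the same approach as the paper: both verify the three hypotheses of Theorem \ref{thm: ter acc lsc+enc to tof} in dimension $3$ and conclude. The only cosmetic difference is in the choice of primary references for (1) and (2): the paper cites \cite[Corollary 1.5]{Nak16} and \cite[Main Theorem 1]{Amb99} respectively (listing \cite{HLL22} and \cite{NS21} as alternatives), while you cite \cite{HLL22} and \cite{NS21} directly.
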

\begin{proof}
We only need to check the conditions of Theorem \ref{thm: ter acc lsc+enc to tof} when $d=3$. Theorem \ref{thm: ter acc lsc+enc to tof}(1) follows from \cite[Corollary 1.5]{Nak16} (see also \cite[Theorem 1.1]{HLL22}), Theorem \ref{thm: ter acc lsc+enc to tof}(2) follows from \cite[Main Theorem 1]{Amb99} (see also \cite[Theorem 1.2]{NS21}), and Theorem \ref{thm: ter acc lsc+enc to tof}(3) follows from Theorem \ref{thm: acc mld enc dim 3}.
\end{proof}

We conjecture that Corollary \ref{cor: acc ld 3dim bounded ld number} generalizes to high dimensions:
\begin{conj}\label{conj: acc ld bounded ld number}
Let $N$ be a non-negative integer, $d$ a positive integer, and $\Ii\subset [0,1]$ a DCC set. Then there exists an ACC set $\Ii'$ depending only on $d,N$ and $\Ii$ satisfying the following. Assume that $(X,B)$ is a klt pair of dimension $d$, such that  
\begin{enumerate}
    \item $\coeff(B)\subseteq\Ii$, and
    \item there are at most $N$ different (exceptional) log discrepancies of $(X,B)$ that are $\le 1$, i.e.
    $$\#(\{a(E,X,B)\mid E\text{ is exceptional over }X\}\cap [0,1])\le N,$$
\end{enumerate}
then $\{a(E,X,B)\mid E\text{ is exceptional over }X\}\cap [0,1]\subset\Ii'$.
\end{conj}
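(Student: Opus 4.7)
The plan is to establish Conjecture \ref{conj: acc ld bounded ld number} under the assumption of Conjecture \ref{conj: acc mld enc}(1) in dimension $d$, that is, to prove the reduction Theorem \ref{thm: reduction to enc acc mld} mentioned just before the conjecture. The entire argument is a direct application of Lemma \ref{lem: reduce to enc}, which has already been proved earlier in the paper precisely for this kind of reduction.

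First I would argue by contradiction. If no ACC set $\Ii'$ depending only on $d$, $N$, and $\Ii$ works, then there exists a sequence of klt pairs $(X_i,B_i)$ of dimension $d$ satisfying conditions (1) and (2), together with exceptional prime divisors $E_i$ over $X_i$, such that after passing to a subsequence $a_i := a(E_i,X_i,B_i) \in [0,1]$ is strictly increasing. Setting
$$\Ii_i := \{a(E,X_i,B_i) \mid E \text{ is exceptional over } X_i\} \cap [0,1],$$
condition (2) and the presence of $a_i \in \Ii_i$ give $1 \le \#\Ii_i \le N$, while $\bigcup_i \Ii_i$ fails the ACC since it contains the strictly increasing sequence $\{a_i\}$.

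Next I would invoke Lemma \ref{lem: reduce to enc}, whose hypotheses are now exactly in place. It produces, after passing to a further subsequence, a DCC set $\Ii'_0 \subset [0,1]$ and a sequence of $\Qq$-factorial enc pairs $(X_i',B_i')$ of dimension $d$ with $B_i' \in \Ii'_0$ and $\{\mld(X_i',B_i')\}$ strictly increasing. Conjecture \ref{conj: acc mld enc}(1) applied to the DCC set $\Ii'_0$ in dimension $d$ then asserts that the set of mlds of enc pairs of dimension $d$ with coefficients in $\Ii'_0$ satisfies the ACC, which is incompatible with the strictly increasing sequence $\{\mld(X_i',B_i')\}$. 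This contradiction finishes the proof.

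There is essentially no obstacle on our end: the combinatorial heart of the reduction--replacing a bounded cluster of exceptional divisors of controlled log discrepancies by a single enc place whose log discrepancy is at least as large as the worst offender--is already handled by Lemma \ref{lem: reduce to enc}, which was designed for exactly this purpose and is also the engine behind Theorems \ref{thm: enc and tof} and \hyperlink{thm: N}{N}$_l \Rightarrow$\hyperlink{thm: E}{E}$_l$. All the genuine difficulty is absorbed into Conjecture \ref{conj: acc mld enc}(1), which is the standing hypothesis; the reduction itself is essentially formal.
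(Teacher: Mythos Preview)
Your proposal is correct and matches the paper's approach exactly: the paper's proof of Theorem \ref{thm: reduction to enc acc mld} is the single line ``This follows from Lemma \ref{lem: reduce to enc},'' and you have simply unpacked that citation into the contradiction argument it encodes.
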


\begin{thm}\label{thm: reduction to enc acc mld}
Assume that Conjecture \ref{conj: acc mld enc}(1) holds in dimension $d$. Then Conjecture \ref{conj: acc ld bounded ld number} holds in dimension $d$.
\end{thm}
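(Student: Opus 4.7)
The plan is to argue by contradiction, reducing a hypothetical failure of Conjecture \ref{conj: acc ld bounded ld number} to a failure of Conjecture \ref{conj: acc mld enc}(1) via a single direct application of Lemma \ref{lem: reduce to enc}. This is consistent with the analogous reduction carried out for Theorem \hyperlink{thm: N}{N}$_l$ from Theorem \hyperlink{thm: E}{E}$_l$ in Lemma \ref{lem: e to n}.

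First I would suppose that Conjecture \ref{conj: acc ld bounded ld number} fails in dimension $d$. Unpacking the negation, there exist a non-negative integer $N$, a DCC set $\Ii\subset[0,1]$, and a sequence of klt pairs $(X_i,B_i)$ of dimension $d$ with $B_i\in\Ii$ such that
\[
\#\bigl(\{a(E,X_i,B_i)\mid E\text{ is exceptional over }X_i\}\cap[0,1]\bigr)\le N
\]
for every $i$, yet the union $\bigcup_i\Ii_i$ fails to satisfy the ACC, where
\[
\Ii_i:=\{a(E,X_i,B_i)\mid E\text{ is exceptional over }X_i\}\cap[0,1].
\]
Pairs with $\Ii_i=\emptyset$ contribute nothing to this union, so after discarding them I may assume $1\le\#\Ii_i\le N$ for every $i$. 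Thus the hypotheses of Lemma \ref{lem: reduce to enc} are met.

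Next I would invoke Lemma \ref{lem: reduce to enc} directly. After passing to a subsequence, it produces a DCC set $\Ii'\subset[0,1]$ and a sequence of $\Qq$-factorial enc pairs $(X_i',B_i')$ of dimension $d$ with $B_i'\in\Ii'$ such that $\{\mld(X_i',B_i')\}_{i=1}^{\infty}$ is strictly increasing. In particular, this strictly increasing infinite sequence lies inside $\textrm{eMLD}_d(\Ii')$, which directly contradicts Conjecture \ref{conj: acc mld enc}(1) in dimension $d$.

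The entire argument is essentially bookkeeping once Lemma \ref{lem: reduce to enc} is in hand; all the substantive birational work, namely the construction of the auxiliary models $X_i'$ via partial extractions and the control of the new boundary coefficients, has already been carried out in the proof of that lemma. Consequently I do not anticipate any serious obstacle beyond the verification that the negation of Conjecture \ref{conj: acc ld bounded ld number} precisely supplies the input data required by Lemma \ref{lem: reduce to enc}.
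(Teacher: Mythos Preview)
Your proposal is correct and takes essentially the same approach as the paper, which simply states that the theorem follows from Lemma \ref{lem: reduce to enc}. You have merely spelled out the contradiction argument in more detail.
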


\begin{proof}
This follows from Lemma \ref{lem: reduce to enc}.
\end{proof}

\begin{proof}[Proof of Corollary \ref{cor: acc ld 3dim bounded ld number}]
This follows from Theorems \ref{thm: acc mld enc dim 3} and \ref{thm: reduction to enc acc mld}.
\end{proof}

Finally, we show the following theorem for independent interest. Theorem \ref{thm: ecn 1 gap implies klt 1 gap} implies that in order to show the 1-gap conjecture for mlds (cf. \cite[Conjecture 5.4]{CDCHJS21}), it suffices to show the 1-gap conjecture for mlds of enc pairs. We note that the 1-gap conjecture has a close relation with the birational boundedness of rationally connected klt Calabi–Yau varieties, see \cite[Corollary 5.5]{CDCHJS21}, \cite{HJ24}.

\begin{thm}\label{thm: ecn 1 gap implies klt 1 gap}
Let $d$ be a positive integer, and $\Ii\subset[0,1]$ a set. Then
\begin{align*}
&\sup\left\{\mld (X,B)< 1\Bigm| (X,B)\text{ is }\Qq\text{-factorial enc,}\dim X=d, \coeff(B)\subseteq\Ii\right\}\\
=&\sup\left\{\mld (X,B)< 1\Bigm| (X,B)\text{ is klt},\dim X=d, \coeff(B)\subseteq\Ii\right\}.
\end{align*}
\end{thm}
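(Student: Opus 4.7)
The inequality LHS $\leq$ RHS is immediate because every $\Qq$-factorial enc pair is klt by definition. For LHS $\geq$ RHS, the plan is to show that for any klt pair $(X,B)$ with $\dim X=d$, $B\in\Ii$, and $\mld(X,B)=a<1$, I can construct a $\Qq$-factorial enc pair $(X',B')$ of dimension $d$ with $B'\in\Ii$, $\mld(X',B')<1$, and $\mld(X',B')\geq a$; taking the supremum over all such $(X,B)$ then yields the reverse inequality.

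After a small $\Qq$-factorialization I assume $X$ is $\Qq$-factorial, and write $D(X,B)_{\leq 1}=\{E_0,E_1,\dots,E_k\}$ with $a(E_0,X,B)=a$. If $k=0$, then $(X,B)$ is enc and we are done. Otherwise, following the approach of Lemma \ref{lem: reduce to enc} and invoking \cite[Lemma 5.3]{Liu18}, I extract exactly $E_1,\dots,E_k$ via a birational morphism $g\colon Y\to X$ with $Y$ $\Qq$-factorial, and set $B_Y:=g_*^{-1}B\in\Ii$. A direct log-discrepancy computation gives
\[
a(E_0,Y,B_Y)=a+\sum_{j=1}^{k}(1-a_j)\,\mult_{E_0}E_j\geq a,
\]
with $a_j:=a(E_j,X,B)$, and $a(F,Y,B_Y)>1$ for any other prime divisor $F$ exceptional over $Y$. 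Hence $D(Y,B_Y)_{\leq 1}\subseteq\{E_0\}$. In the main case $a(E_0,Y,B_Y)\leq 1$, the pair $(Y,B_Y)$ is $\Qq$-factorial enc with $\mld(Y,B_Y)\geq a$, and serves as the desired $(X',B')$.

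The potential obstacle is the residual case $a(E_0,Y,B_Y)>1$, in which $(Y,B_Y)$ is canonical with $\mld(Y,B_Y)=1-b^*$, where $b^*:=\max_i b_i$ is the largest coefficient of $B$; necessarily $a\leq 1-b^*$. When $b^*>0$ (which is automatic as soon as $B\neq 0$, and then $b^*\in\Ii$), I produce the required enc pair by an auxiliary construction: take the Du Val surface $A_1$-singularity $(V\ni v)$ and a smooth Weil divisor $D'\subset V$ \emph{not} passing through $v$. Then $(V,b^*D')$ is $\Qq$-factorial enc with $\mld=1-b^*$, since the unique exceptional divisor over $V$ has log discrepancy $1$ and center $v\notin D'$, while $D'$ itself contributes log discrepancy $1-b^*$. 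Applying Lemma \ref{lem: high dimension imply low dimension}(1) $d-2$ times to the product $(V\times\Cc^{d-2},b^*D'\times\Cc^{d-2})$ then yields a $\Qq$-factorial enc pair in dimension $d$ with $B'\in\Ii$ and $\mld=1-b^*\geq a$. The hardest sub-case, which I expect to be the main technical obstacle, is the degenerate one $B=0$ (so $b^*=0$): I plan to handle it by running the extraction iteratively one divisor at a time and stopping as soon as the pair becomes enc, and, if that still leaves us in the canonical regime, by an analogous auxiliary construction using the enc cyclic-quotient singularities $\tfrac{1}{n}(1,\dots,1)$ whose mld's are described in Lemma \ref{lem: set of cyc lds} combined with Lemma \ref{lem: high dimension imply low dimension}(1).
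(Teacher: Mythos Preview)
Your overall architecture is right, but you misapply \cite[Lemma 5.3]{Liu18}, and this is what forces you into a residual case that the paper's proof avoids entirely. The content of Lemma 5.3 here (with all target values set to $1$) is precisely that there \emph{exists} an index $i\in\{0,1,\dots,k\}$ such that extracting all $E_j$ with $j\neq i$ yields $a(E_i,Y,f_*^{-1}B)<1$. The paper's proof uses exactly this: it lets Lemma 5.3 pick $i$, and then $(Y,f_*^{-1}B)$ is automatically $\Qq$-factorial enc with $1>\mld(Y,f_*^{-1}B)\geq a(E_i,X,B)\geq\mld(X,B)$, finishing the proof in one line. You instead \emph{fix} $i=0$ (the mld-computing divisor) and then have to cope with the possibility $a(E_0,Y,B_Y)>1$; this is not what Lemma 5.3 says, and it is the source of your detour.

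Your $A_1$-surface workaround for the residual case when $B\neq 0$ is correct (and a nice observation), but it is unnecessary once Lemma 5.3 is used properly. For $B=0$, however, your sketch has a genuine gap: the cyclic quotients $\tfrac{1}{n}(1,\dots,1)$ only produce $\mld=\tfrac{d}{n}\le\tfrac{d}{d+1}$, so they cannot witness values $a\in\bigl(\tfrac{d}{d+1},1\bigr)$, and such values do occur (e.g.\ $\mld=\tfrac{12}{13}$ in dimension $3$ by Theorem \ref{thm: 12/13}). Your alternative idea of ``extracting one divisor at a time and stopping when enc'' is just a restatement of the problem, not a proof that the process terminates in the enc state rather than overshooting. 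The correct fix is simply to invoke \cite[Lemma 5.3]{Liu18} to choose the right $E_i$, exactly as in the paper's one-paragraph argument.
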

\begin{proof}
Let $(X,B)$ be a klt pair such that $\dim X=d$, $\coeff(B)\subseteq \Ii$, and $\mld(X,B)<1$. By \cite[Proposition 2.36]{KM98}, we may assume that $E_1,E_2,\ldots,E_k$ are all exceptional prime divisors over $X$, such that $a(E_i,X,B)<1$.

By \cite[Lemma 5.3]{Liu18}, there exist $1\le i\le k$ and a birational morphism $f:Y\to X$ which extracts exactly all $E_1,E_2,\ldots,E_k$ but $E_i$, such that $1>a(E_i,Y,f_{*}^{-1}B)\ge a(E_i,X,B)$. Possibly replacing $Y$ with a small $\Qq$-factorialization, we may assume that $Y$ is $\Qq$-factorial. Then $(Y,f_{*}^{-1}B)$ is a $\Qq$-factorial enc pair with $1>\mld(Y,f_{*}^{-1}B)\ge \mld(X,B)$, and we are done.  
\end{proof}

\section{Further remarks}\label{sec: remark}

\begin{rem}[History of enc pairs]
We briefly introduce some history on the study of enc pairs. In \cite[Lemma 5.4]{Liu18}, a class of pairs similar to enc pairs, that is, pairs $(X,B)$ such that $\mld(X,B)<a$ and there exists only $1$ exceptional divisor with log discrepancy $\le a$ with respect to $(X,B)$, was constructed. When $a=1$, these are exactly enc pairs. However, \cite{Liu18} only deals with the case when $a<1$ and does not deal with the case when $a=1$. \cite[Definition 2.1]{Jia21} first formally introduced enc varieties $X$, naming them ``extremely noncanonical". In dimension $3$ and when $\mld(X)\to 1$, \cite{Jia21} systematically studied the singularities of these varieties, which played a crucial role in his proof of the $1$-gap conjecture for threefolds. \cite{HLL22} introduced enc pairs $(X,B)$ to prove the $1$-gap conjecture for threefold pairs. 
\end{rem}

\begin{rem}[Enc pairs and exceptional Fano pairs]\label{rem: enc and exceptional pair}
We explain why we use the notation ``exceptionally non-canonical" instead of ``extremely noncanonical" as in \cite{Jia21}. The key reason is that, as suggested by Shokurov, we expect exceptionally non-canonical singularities in dimension $d$ to have connections with the global lc thresholds in dimension $d-1$, while the latter is known to have connections with the mlds of exceptional pairs in dimension $d-1$ \cite[Theorem 1.2]{Liu23} (cf. \cite{HLS24,Sho20}). Shokurov suggested us that the role of enc singularities in the study of klt singularities may be as important as the role of exceptional pairs in the study of Fano varieties (cf. \cite{Bir19}).

Theorem \ref{thm: ecn 1 gap implies klt 1 gap} could provide some evidence of this for us: when $d=3$ and $\Ii=\{0\}$, the $1$-gap of mld is equal to $\frac{1}{13}$ (Theorem \ref{thm: 12/13}) and is reached at an enc cyclic quotient singularity $\frac{1}{13}(3,4,5)$. If we let $f: Y\rightarrow X$ be the divisorial contraction which extracts the unique prime divisor $E$ over $X\ni x$ such that $a(E,X,0)=\mld(X\ni x)$, then $E$ is normal and $$\left(\mathbb P(3,4,5),\frac{12}{13}(x_1^3x_2+x_2^2x_3+x_3^2x_1=0)\right)\cong (E,B_E),$$
where $K_E+B_E\sim_{\mathbb Q}f^*K_X|_E=(K_Y+\frac{1}{13}E)|_E$. On the other hand, $\frac{12}{13}$ is also expected to be \footnote{It is proven in \cite[Theorem 1.1]{LS23} after the first version of this paper appeared.} the largest surface global lc threshold (\cite[Remark 2.5]{Liu23},~\cite[Notation 4.1]{AL19}) and can be reached by the same pair $(\mathbb P(3,4,5),\frac{12}{13}(x_1^3x_2+x_2^2x_3+x_3^2x_1=0))$ \cite[40]{Kol13}.
\end{rem}

\begin{rem}[Enc pairs, Calabi-Yau varieties, and mirror symmetry]\label{rem: enc and cy varieties}
Enc pairs also have a deep relationship with Calabi-Yau varieties in different ways. 

First, by Theorem \ref{thm: ecn 1 gap implies klt 1 gap}, the $1$-gap conjecture for mlds of enc pairs implies the $1$-gap conjecture of mlds, while the latter will imply the birational boundedness of rationally connected Calabi-Yau varieties by applying similar arguments as in \cite[Proof of Theorem 1.2]{HJ24}.

Second, as mentioned in Remark \ref{rem: enc and exceptional pair}, the mlds of enc pairs have connections with the global lc thresholds, while the latter is related to the minimal possible mld of klt Calabi-Yau varieties. Indeed, the $1$-gap of the mlds of enc pairs is always smaller than or equal to the minimal possible mld of klt Calabi-Yau varieties of smaller dimensions, and they are expected to be the same (cf. \cite[Proposition 6.1]{ETW22}). Finally, the second author was informed by Chengxi Wang that the klt Calabi-Yau variety with minimal possible mld should be associated with a klt Calabi-Yau variety with maximal possible index by mirror symmetry (cf. \cite[Proposition 6.1]{ETW22}).
\end{rem}

\end{document}